\newtheoremstyle{noparen}
  {3pt}{3pt}        
  {\itshape}        
  {}                
  {\bfseries}       
  {}               
  { }               
  {\thmname{#1}\thmnumber{ #2} \thmnote{\normalfont #3}}
\theoremstyle{noparen}
\numberwithin{equation}{section}
\newtheorem{theorem}{Theorem}[section]
\newtheorem{lemma}[theorem]{Lemma}
\newtheorem{proposition}[theorem]{Proposition}
\newtheoremstyle{nopunct}
  {3pt}{3pt}        
  {}                
  {}                
  {\bfseries}       
  {}                
  { }               
  {\thmname{#1}\thmnumber{ #2} \thmnote{ #3}}
\theoremstyle{nopunct}
\newtheorem{remark}[theorem]{Remark}
\newtheorem{definition}[theorem]{Definition}
\newtheoremstyle{withcitation}
{\topsep}{\topsep}{\itshape}{}{\bfseries}{.}{ }{\thmname{#1}\thmnumber{ #2}\thmnote{ (#3)}}
\theoremstyle{withcitation}
\crefname{lemma}{Lemma}{Lemma}
\newcommand{\refcond}[1]{%
  \hyperref[cond:#1]{(#1)}
} 
\title{Combinatorial Ricci Flows and Hyperbolic Structures on a Class of Compact $3$-Manifolds with Boundary}
\author{Xinrong Zhao}
\date{}
\begin{document}
\maketitle

\begin{abstract}
    In this paper, we study a combinatorial Ricci flow on closed pseudo $3$-manifolds $(M,\mathcal{T})$. We prove that if every edge in the triangulation $\mathcal{T}$ has valence at least $9$, then the combinatorial Ricci flow converges exponentially fast to a hyperbolic metric. As a consequence, for any compact $3$-manifold $N$ with boundary admitting an ideal triangulation $\mathcal{T}_N$ whose edges all have valence at least $9$, there exists a unique complete hyperbolic metric with totally geodesic boundary on $N$ such that $\mathcal{T}_N$ is isotopic to a geometric decomposition of $N$. This provides a partial solution to the conjecture of Costantino, Frigerio, Martelli and Petronio, and hence an affirmative answer of Thurston's geometric ideal triangulation conjecture for such manifolds. Moreover, we obtain explicit upper and lower bounds for the resulting hyperbolic metric.
\end{abstract}

\section{Introduction}
According to Thurston’s geometrization program, the vast majority of $3$-manifolds are hyperbolic. For cusped hyperbolic $3$-manifolds, a general construction method initiated by Thurston is to obtain them by isometrically gluing ideal hyperbolic tetrahedra; see, for example, \cite{thurston1980geometry, Martelli,SnapPea}. For compact hyperbolic $3$-manifolds with totally geodesic boundary, one may similarly decompose them into truncated hyperbolic tetrahedra; see, for example, \cite{MR1208308,MR2052949,Frigerio01012004}. Motivated by Thurston’s geometric ideal triangulation conjecture for cusped hyperbolic $3$-manifolds, it is conjectured that every compact hyperbolic $3$-manifold with totally geodesic boundary admits a geometric triangulation; see, for instance, \cite{KeGethurstons}. In \cite{Largethan6}, Costantino, Frigerio, Martelli, and Petronio proved that if a compact $3$-manifold $N$ with boundary admits an ideal triangulation $\mathcal{T}_N$ whose edges all have valence at least $6$, then $N$ admits a hyperbolic metric with  totally geodesic boundary. They further conjectured that the ideal triangulation $\mathcal{T}_N$ can be realized by truncated hyperbolic tetrahedra.
Using combinatorial Ricci flow, Ke, Ge, and Hua \cite{KeGeHua} made notable progress by providing a resolution of this conjecture in the case where each edge of $\mathcal{T}_N$ has valences at least $10$. In the present paper, we further improve their result by lowering this bound to $9$.

\begin{theorem}\label{thm:Begining}
    Let $N$ be a compact $3$-manifold with boundary such that all boundary components are surfaces of genus at least $2$. If $N$ admits an ideal triangulation $\mathcal{T}_N$ with valence at least $9$ at all edges, then there exists a unique complete hyperbolic metric on $N$ with totally geodesic boundary. Furthermore, $\mathcal{T}_N$ is isotopic to a geometric decomposition of $N$.
\end{theorem}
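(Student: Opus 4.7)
The plan is to reduce \cref{thm:Begining} to the main combinatorial Ricci flow convergence result on closed pseudo $3$-manifolds $(M,\mathcal{T})$ stated in the abstract. I would first collapse each boundary component of $N$ to a single point, turning the ideal triangulation $\mathcal{T}_N$ into a triangulation $\mathcal{T}$ of a closed pseudo $3$-manifold $M$ whose singular vertices are in bijection with the boundary components of $N$. The link of each such vertex is the corresponding boundary surface (of genus $\geq 2$), and each tetrahedron of $\mathcal{T}_N$, which is combinatorially a tetrahedron truncated near its four vertices, becomes an honest tetrahedron of $\mathcal{T}$ after the collapse. Crucially, the edge-valence of every edge is preserved, so the hypothesis that every edge of $\mathcal{T}_N$ has valence $\geq 9$ transfers to $(M,\mathcal{T})$.

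\textbf{Application of the combinatorial Ricci flow.} I would then apply the Ricci flow convergence theorem on $(M,\mathcal{T})$ to produce a combinatorial hyperbolic metric realizing every abstract tetrahedron as a truncated hyperbolic tetrahedron, where each singular vertex is cut off along a totally geodesic plane and replaced by a hyperbolic triangle. Reassembling these truncated tetrahedra according to the face identifications of $\mathcal{T}_N$ gives a hyperbolic metric on $N$: interior faces are glued by isometries between hyperbolic triangles, and the truncation triangles at each singular vertex tile the corresponding boundary surface by totally geodesic triangles. Compactness of $N$ yields completeness, so this gives a complete hyperbolic metric on $N$ with totally geodesic boundary and, by construction, $\mathcal{T}_N$ is realized as a geometric decomposition.

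\textbf{Uniqueness.} For uniqueness, I would argue in two stages. First, the variational/convex-functional structure underlying the combinatorial Ricci flow (the same structure that drives its convergence) yields at most one combinatorial hyperbolic metric on $(M,\mathcal{T})$; this already forces uniqueness within the class of hyperbolic structures on $N$ in which $\mathcal{T}_N$ is geometric. To upgrade this to genuine uniqueness of the hyperbolic metric on $N$ with totally geodesic boundary, I would invoke the rigidity theorem for compact hyperbolic $3$-manifolds with totally geodesic boundary, which guarantees that any such metric is determined up to isometry by the underlying topology of $N$.

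\textbf{Main obstacle.} The essential mathematical content is entirely packed into the Ricci flow convergence theorem for $(M,\mathcal{T})$; the reduction above is a structural translation. The subtle point in carrying it out is verifying that the limiting metric corresponds to \emph{non-degenerate} truncated hyperbolic tetrahedra with strictly positive edge lengths, rather than to a boundary configuration where some truncating triangle degenerates. Ensuring this non-degeneracy — equivalently, that the flow stays inside the admissible region of combinatorial metrics and that the limit lies in its interior — is exactly where the valence $\geq 9$ hypothesis intervenes, and is the step I expect to require the most delicate estimate in moving from the convergence theorem to \cref{thm:Begining}.
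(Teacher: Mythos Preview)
Your proposal is correct and follows essentially the same route as the paper: set $M=C(N)$ and $\mathcal{T}=\mathcal{T}_N$, apply the main convergence result (\cref{thm:MainTheorem}) to obtain a zero-curvature hyper-ideal metric, read off the hyperbolic structure on $N$ with totally geodesic boundary from $S(M,\mathcal{T},l)$, and quote Mostow rigidity for uniqueness. One calibration: the non-degeneracy you flag as the ``main obstacle'' is already part of the \emph{statement} of \cref{thm:MainTheorem} (the limit lies in $\mathcal{L}(M,\mathcal{T})$, with explicit two-sided bounds on each $l_e$), so no further estimate is needed in passing from \cref{thm:MainTheorem} to \cref{thm:Begining}; the delicate work with the valence $\geq 9$ hypothesis is entirely absorbed into the proof of \cref{thm:MainTheorem} itself.
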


The uniqueness of the complete hyperbolic metric on $N$ with totally geodesic boundary follows from a standard application of Mostow's Rigidity Theorem; see \cite{Thurston_1982} for details. Thus, the main focus of this paper is to establish the existence of such a hyperbolic metric. Our approach is to find a geometric decomposition of $N$ via the combinatorial Ricci flow method, which is a large program initiated by Luo \cite{LuoFlowWithBoundary}. For other variants of combinatorial Ricci flow on $3$-manifolds, we refer the reader to \cite{MR4498159,feng2025hyperbolization,KeGethurstons}.

We now introduce our settings. Let $\mathscr{T}=\bigsqcup_{i=1}^n \sigma_i$, $n\in\mathbb{N}$, be the disjoint union of a finite collection of tetrahedra, which is naturally a simplicial complex. Suppose that the $4n$ codimension-1 faces of these tetrahedra are partitioned into $2n$ pairs, and each pair is glued together by an affine homeomorphism.  Denote the resulting quotient complex by $\mathcal{T}$ and its underlying space by $M = |\mathcal{T}|$. The pair $(M, \mathcal{T})$ is called a \emph{closed pseudo $3$-manifold}. Note that simplices in $\mathcal{T}$ are equivalence classes of simplices in $\mathscr{T}$. 
Let $V = V(\mathcal{T})$ and $E = E(\mathcal{T})$ denote the sets of vertices and edges of $\mathcal{T}$, respectively. 
For an edge $e \in E$, we define its \emph{valence} $v(e)$ to be the number of edges in $\mathscr{T}$ that belong to the equivalence class corresponding to $e$.

\begin{figure}
        \centering 
        \includegraphics[width=0.6\linewidth]{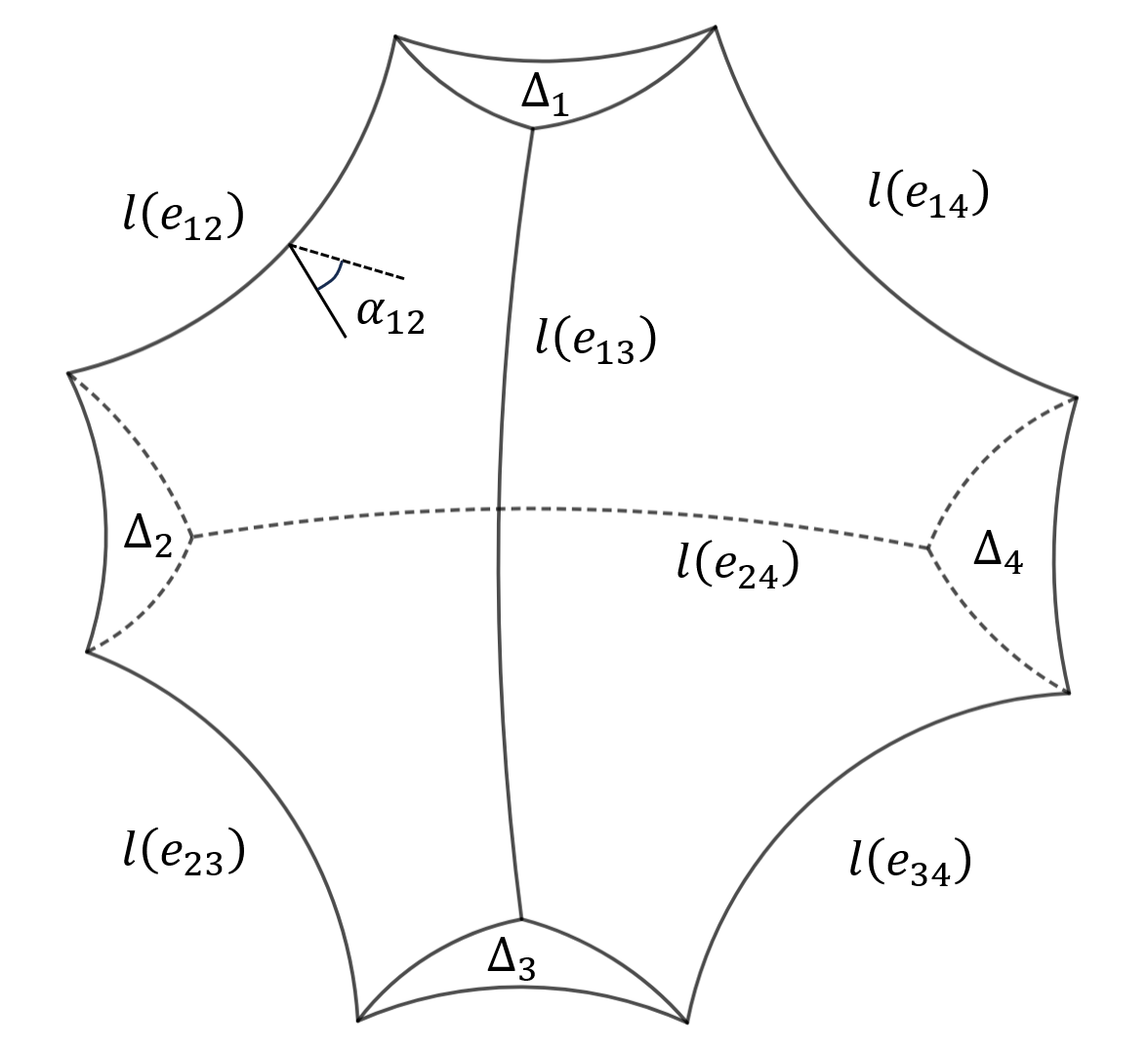}
        \caption{Hyper-ideal tetrahedron.}
        \label{fig:Tetrahedron}
    \end{figure}

    A hyper-ideal tetrahedron $\sigma$ in hyperbolic $3$-space $\mathbb{H}^3$ is a compact convex polyhedron diffeomorphic to a truncated tetrahedron in the Euclidean space $\mathbb{R}^3$ such that all its four hexagonal faces are right-angled hyperbolic hexagons; see Figure \ref{fig:Tetrahedron}. The four triangular faces $\Delta_i$, $i=1,2,3,4$, are called vertex triangles. By definition, the dihedral angle formed by a hexagonal face and a vertex triangle is $\frac{\pi}{2}$. An edge in a hyper-ideal tetrahedron is the intersection of two hexagonal faces. For each pair $\{i,j\}\subset\{1,2,3,4\}$ we denote by $e_{ij}$ the edge connecting the vertex triangles $\Delta_i$ and $\Delta_j$. The dihedral angle at the edge $e_{ij}$, denoted by $\alpha_{ij}$, is the angle formed by the two hexagonal faces adjacent to $e_{ij}$, which lies in $(0,\pi)$. The length of the edge $e_{ij}$ is denoted by $l_{ij}$. It is known that the geometry of a hyper-ideal tetrahedron is determined by the edge length vector $(l_{12},l_{13},l_{14},l_{23},l_{24},l_{34})\in\mathbb{R}^6_{>0}$. Therefore, the space of isometry classes of hyper-ideal tetrahedra can be identified with a subset $\mathcal{L}\subset\mathbb{R}^6$. Let $A$ be a finite set. We write $\mathbb{R}^A$ (resp. $\mathbb{R}^A_{>0}$) for the space of all real-valued (resp. positive real-valued) functions on $A$. After fixing an ordering of $A$, the space $\mathbb{R}^A$ is naturally identified with the Euclidean space $\mathbb{R}^{|A|}$, where $|A|$ denotes the cardinality of $A$.

    Combinatorially, each hyper‑ideal tetrahedron corresponds to a tetrahedron $\sigma$: its four vertex triangles $\Delta_i$ correspond to the four vertices $v_i$ of $\sigma$, and each edge $e_{ij}$ corresponds to the edge connecting $v_i$ and $v_j$ in $\sigma$. 
    For a closed pseudo $3$-manifold $(M,\mathcal{T})$, assigning a hyper-ideal tetrahedron to each tetrahedron of $\mathcal{T}$ yields a hyper-ideal polyhedral metric on $M$.
    
\begin{definition}\label{def:HyperidealMetric}
    A hyper-ideal polyhedral metric, called hyper-ideal metric for short, on $(M,\mathcal{T})$ is obtained by replacing each tetrahedron in $\mathcal{T}$ with a hyper‑ideal tetrahedron and replacing each affine homeomorphism between two faces by an isometry between the corresponding two hexagonal faces. The resulting metric space, denoted by $S(M,\mathcal{T},l)$, is uniquely determined by the edge length vector 
    $$
    l = (l_{e_1}\dots,l_{e_m}) \in \mathbb{R}^E_{>0},
    $$
    where $E=\{e_1,\dots,e_m\}$ is the edge set of $\mathcal{T}$. We denote by 
    $$
    \mathcal{L}(M,\mathcal{T}) \subset \mathbb{R}^E_{>0}
    $$ the space of all hyper-ideal metrics on  $(M, \mathcal{T})$, parameterized by the edge‑length vector $l$.

\end{definition}

\begin{definition}
   Let $(M,\mathcal{T})$ be a closed pseudo $3$-manifold with a hyper-ideal metric determined by the edge length vector $l\in\mathcal{L}(M,\mathcal{T})$. The \emph{Ricci curvature} at an edge $e\in E$ is defined as
   $$
    K_e(l)=2\pi- C_e,
   $$
   where $C_e$ is the cone angle at $e$, that is, the sum of the dihedral angles at $e$ over all tetrahedra incident to $e$. The Ricci curvature vector of $(M,\mathcal{T})$ associated with $l$ is then given by
   $$
   K=K(l)=(K_{e_1}(l),\cdots,K_{e_m}(l)),
   $$
   where $E=\{e_1,\dots,e_m\}$.
\end{definition}

Suppose that $N$ is a compact $3$-manifold with boundary such that all boundary components are surfaces of genus at least $2$. Our goal is to find a hyperbolic metric on $N$ with totally geodesic boundary.
Let $C(N)$ denote the compact space obtained by coning each boundary component of $N$ to a point. If the boundary of $N$ has $k$ components, then $C(N)$ contains exactly $k$ cone points $\{v_1, \dots, v_k\}$, and the complement $C(N) - \{v_1, \dots, v_k\}$ is homeomorphic to $N - \partial N$.
An \emph{ideal triangulation} of $N$ is a triangulation $\mathcal{T}$ of the space $C(N)$ whose vertex set is precisely $\{v_1, \dots, v_k\}$. 
In the terminology introduced earlier, $(C(N), \mathcal{T})$ is a closed pseudo $3$-manifold. If there is a hyper-ideal metric $l \in \mathcal{L}(C(N), \mathcal{T})$ such that the Ricci curvature at every edge is $0$, then the metric space $S(C(N), T, l)$ constructed in \cref{def:HyperidealMetric} induces a hyperbolic metric on $N$ with totally geodesic boundary. In this case, we call $(C(N), \mathcal{T})$ a \emph{geometric decomposition} (or \emph{geometric realization}) of a hyperbolic metric on $N$ associated with the ideal triangulation $\mathcal{T}$.

To study the existence of hyper-ideal metrics with zero Ricci curvature on a closed pseudo $3$-manifold $(M, \mathcal{T})$, Luo \cite{LuoFlowWithBoundary}, motivated by \cite{2003Combinatorial}, introduced the following combinatorial Ricci flow:
\begin{equation}\label{eq:RicciFlow1}
    \frac{d}{dt} l(t) = K(l(t)), \quad\forall t \geq 0,
\end{equation}
where $l(t) \in \mathcal{L}(M,\mathcal{T})$. In components, the vector valued equation \eqref{eq:RicciFlow1} can be written as
$$
\frac{d}{dt} l_e(t) = K_e(l(t)), \quad\forall e\in E,\ t \geq 0.
$$
This is a negative gradient flow of a locally convex function on $\mathcal{L}(M,\mathcal{T})$ which is related to the co‑volume functional. A principal difficulty in applying the flow method is that the space $\mathcal{L}(M,\mathcal{T})$ is not convex in $\mathbb{R}^E$.

To address this difficulty, Luo and Yang \cite{LuoYang} considered the general framework of generalized hyper‑ideal metrics. For each vector $l=(l_{12},l_{13},l_{14},l_{34},l_{24},l_{23})\in\mathbb{R}^6_{>0}$, one can define extended dihedral angles $\widetilde{\alpha}_{ij}$ which are continuous functions of edge lengths $l_{ij}$, extending the dihedral angles $\alpha_{ij}$ of hyper-ideal tetrahedra; see \cref{def:ExtendedDihedralAngle}. We say that such a vector corresponds to a generalized hyper-ideal tetrahedron, and that a vector $l\in\mathbb{R}^6_{>0}-\mathcal{L}$ corresponds to a degenerate hyper-ideal tetrahedron. 
A \emph{generalized hyper‑ideal metric} on a closed pseudo $3$-manifold $(M,\mathcal{T})$ is then defined by an edge length vector $l\in\mathbb{R}^E_{>0}$. It is a hyper-ideal metric if and only if $l\in\mathcal{L}(M,\mathcal{T})$. Although it may not correspond to a metric‑space structure, the extended dihedral angles remain well-defined, which is sufficient for the definition of the flow.

Let $(M, \mathcal{T})$ be a closed pseudo $3$-manifold, which is the quotient space of the simplicial complex $\mathscr{T} = \bigsqcup_{i=1}^n  \sigma_i$. Denote by $E(\cdot)$ and $T(\cdot)$ the sets of edges and tetrahedra of a given complex, respectively. For an edge $e \in E(\cdot)$ and a tetrahedron $\sigma \in T(\cdot)$, we write $e \sim \sigma$ if $e$ is incident to $\sigma$, i.e., if $e$ is contained in $\sigma$.
Let $P_E: E(\mathscr{T}) \to E(\mathcal{T})$ and $P_T: T(\mathscr{T}) \to T(\mathcal{T})$ be the projection maps (or the quotient maps) on the set of edges and tetrahedra, respectively. Recall that $E=E(\mathcal{T})$. It is clear that the valence $v(e)$ of an edge $e\in E$ is the number of edges in $P_E^{-1}(e)$.
Let $l \in \mathbb{R}^{E}_{>0}$ be a generalized hyper‑ideal metric on $(M, \mathcal{T})$. This induces a length vector $\hat{l} := l \circ P_E: E(\mathscr{T}) \to \mathbb{R}_{>0}$ on $E(\mathscr{T})$. We can then endow each $\sigma_i \in \mathscr{T}$ with the structure of a generalized hyper-ideal tetrahedron using the length vector $\hat{l}$. For each $\hat{e} \in E(\mathscr{T})$, it is contained in a unique tetrahedron $\sigma_j$. We denote by $\alpha(\hat{e})$ the extended dihedral angle of $\hat{e}$ in $\sigma_j$ with respect to the length vector $\hat{l}$.

\begin{definition}
Let $l \in \mathbb{R}^{E}_{>0}$ be a generalized hyper-ideal metric on $(M, \mathcal{T})$. The \emph{generalized Ricci curvature} of an edge $e \in E$ is defined as
$$
\widetilde{K}_e(l) = 2\pi - \sum_{\hat{e} \in P_E^{-1}(e)} \alpha(\hat{e}).
$$
\end{definition}

By introducing a change of variables in \eqref{eq:RicciFlow1}, Ke, Ge, and Hua \cite{KeGeHua} studied the following extended Ricci flow defined on the space of generalized hyper‑ideal metrics $\mathbb{R}^E_{>0}$:
\begin{equation}\label{eq:ExtendedRicciFlow}
\begin{cases}
\dfrac{d}{dt} \, l(t) = \widetilde{K}\bigl(l(t)\bigr) \, l(t), & l(t) \in \mathbb{R}^E_{>0},\ \forall t > 0, \\
l(0)=l^0 \in \mathbb{R}^E_{>0}.
\end{cases}
\end{equation}
A solution $l(t)$ of \eqref{eq:ExtendedRicciFlow} is said to \emph{converge} if there exists $l \in \mathbb{R}^E_{>0}$ such that
$$
l(t) \to l, \qquad t \to \infty.
$$
They proved the long‑time existence and uniqueness of solutions of the extended Ricci flow; see \cref{sec:ExtendedRicciFlow} for details.

The following is the main result of the present paper.

\begin{theorem}\label{thm:MainTheorem}
    Let $(M,\mathcal{T})$ be a closed pseudo $3$-manifold such that $v(e) \ge 9$ for every edge $e \in E$. Then there exists a zero-curvature hyper-ideal metric $l \in \mathcal{L}(M,\mathcal{T})$, which is unique in $\mathbb{R}^{E}_{>0}$. Moreover, for each $e\in E$,
    $$
        l_e(t) \in \left[\operatorname{arccosh}(1+\mu_{v(e)}), \operatorname{arccosh} (b_{v(e)})\right]\subset(0,\operatorname{arccosh}2],
        $$
    where $\mu_{v(e)},b_{v(e)} > 0$ are constants depending only on the valence of the edge $e$, defined as in \eqref{eq:mu-form} and \eqref{eq:bnExpression}.
    In this case, for every initial data $l^{0} \in \mathbb{R}^{E}_{>0}$, the solution $l(t)$ of the extended Ricci flow \eqref{eq:ExtendedRicciFlow} converges to $l$ exponentially fast as $t \to \infty$.
\end{theorem}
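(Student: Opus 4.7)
The strategy is to recognize \eqref{eq:ExtendedRicciFlow} as a gradient-type flow of the Luo--Yang extended co-volume functional $F$, and to use the hypothesis $v(e)\ge 9$ to trap every orbit in an explicit compact region $R$ inside the hyper-ideal locus $\mathcal{L}(M,\mathcal{T})$ on which $F$ is strictly convex. Once such an $R$ is produced, existence and uniqueness of a zero-curvature metric in $R$, together with exponential convergence of \eqref{eq:ExtendedRicciFlow} from every initial datum, will follow from standard gradient-flow theory combined with a uniform Hessian bound on $R$. To set this up, I would first pass to the logarithmic variable $u_e=\ln l_e$, under which \eqref{eq:ExtendedRicciFlow} becomes $\dot u=\widetilde K(e^u)$ and is the negative gradient of $F(u)$ on $\mathbb{R}^E$.

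The heart of the argument is the a priori estimate
\[
l_e(t)\in\bigl[\operatorname{arccosh}(1+\mu_{v(e)}),\,\operatorname{arccosh}(b_{v(e)})\bigr]\subset\bigl(0,\operatorname{arccosh}2\bigr],\qquad e\in E,\ t\ge 0.
\]
To produce the constants $\mu_v$ and $b_v$, I would analyse the symmetric model of a regular hyper-ideal tetrahedron with all six edge lengths equal to some $l^{\ast}(v)$ and every dihedral angle equal to $2\pi/v$; solving the right-angled hexagon relations gives $\cosh l^{\ast}$ explicitly as an algebraic function of $v$, and $1+\mu_v$, $b_v$ are then chosen so that the box $R=\prod_{e\in E}[\operatorname{arccosh}(1+\mu_{v(e)}),\operatorname{arccosh}(b_{v(e)})]$ sits strictly inside $\mathcal{L}(M,\mathcal{T})$ whenever $v(e)\ge 9$. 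To show that any critical point of $F$ must already lie in $R$, I would combine the critical-point equation $\sum_{\hat e\in P_E^{-1}(e)}\alpha(\hat e)=2\pi$ with edge-by-edge tangent-line comparisons, together with the monotonicity of each extended dihedral angle $\alpha(\hat e)$ as a function of its opposite edge length. A maximum-principle-type argument for \eqref{eq:ExtendedRicciFlow}, based on the sign of $\widetilde K_e$ along the faces of $\partial R$, then confines the orbit $l(t)$ to $R$.

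The main obstacle is precisely this trapping argument at valence $v=9$: the analogous estimate in Ke--Ge--Hua is comfortable for $v\ge 10$ but becomes tight at $v=9$, and improving it by one unit requires a sharper tangent-line inequality for $\alpha$, most likely combined with a Jensen-type comparison that exploits the convexity/concavity properties of $\alpha$ viewed as a function of $\ln l$ on the relevant sub-interval of $(0,\operatorname{arccosh}2]$. Once $R$ is shown to be forward-invariant, the unique minimizer $l^{\ast}$ of $F$ lies in $R\subset\mathcal{L}(M,\mathcal{T})$ and hence defines a genuine zero-curvature hyper-ideal metric; uniform positive-definiteness of $\operatorname{Hess}F$ on $R$, obtained by compactness, then yields the exponential rate of convergence via linearisation around $l^{\ast}$.
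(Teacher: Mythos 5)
Your proposal reproduces the paper's high-level skeleton (variational structure of the flow, a priori trapping of the orbit in a compact box inside $\mathcal{L}(M,\mathcal{T})$, then convexity for uniqueness and exponential convergence), but the step that actually carries the content of the theorem --- the barrier argument at valence $9$ --- is not supplied, and the mechanism you suggest for it cannot work as stated. The point is quantitative: with the paper's formula $\phi(x,2,2,1,2,2)=\frac{-x+9}{x+7}$, at the corner configuration where a valence-$9$ edge has cosh-length $2$, its four neighbouring edges in a tetrahedron also have cosh-length $2$ and the opposite edge is near length $0$, one gets $\phi=7/9>\cos(2\pi/9)$, i.e.\ the dihedral angle is \emph{strictly less} than $2\pi/9$; so the cone angle can fall below $2\pi$, $\widetilde K_e>0$, and the flow pushes the edge \emph{outward} through the face $\{\cosh l_e=2\}$ of your box. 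Hence no single-tetrahedron ``sharper tangent-line'' or Jensen-type refinement of $\alpha$ as a function of the edge's own (log-)length can restore forward invariance: the inequality you need is genuinely false at that corner. What rescues it in the paper is extra information about the \emph{other} edges at the moment the barrier is touched: a valence-dependent lower bound $\cosh l\ge 1+\mu_{v}$ on opposite edges, itself obtained by an iterated bootstrap ($\xi_k\uparrow\xi_\infty$, the functions $\psi_\xi$ and their monotonicity, \cref{thm:Lowerbound}), coupled with upper bounds strictly below $2$ on neighbouring edges when nearby valences exceed $9$ (the $d_n,q_n,p_n$ machinery of \cref{lem:dn-Define}, \cref{lem: q_Define}, \cref{thm:UpperBound}, \cref{lem:PnAndQn}, \cref{prop:NonNineDegreeNeighbor}, verified numerically in the parameter table). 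This coupled, combinatorially-aware two-sided bootstrap is precisely the improvement from valence $10$ to $9$, and your plan does not contain it; also note the constants in the statement are the specific $b_n$ of \eqref{eq:bnExpression} and the fixed-point values $\mu_n=\eta_{\xi_\infty}(\cos(2\pi/n))$, not regular-tetrahedron lengths.

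Two further gaps in the surrounding architecture. First, exponential convergence for \emph{every} initial datum does not follow from trapping orbits in your box $R$: an orbit starting far outside $R$ need not enter it, and all the barrier estimates are only valid while $\sup_e\cosh l_e\le 2$. The paper instead runs the flow from one well-chosen initial datum to produce a bounded orbit, extracts a zero-curvature metric (\cref{prop:BoundnessToExistence}, \cref{thm:phi-1and1}), and then invokes the Ke--Ge--Hua equivalence theorem (\cref{thm:convergence-equivalence}) to get exponential convergence from arbitrary initial data. Second, uniqueness ``in $\mathbb{R}^E_{>0}$'' is a global statement among generalized metrics; local strict convexity of the functional on $R\subset\mathcal{L}(M,\mathcal{T})$ only gives uniqueness there, and the paper appeals to Luo--Yang's rigidity theorem for the global claim. (A minor technical slip: in the variable $u_e=\ln l_e$ the flow is $\dot u=\widetilde K$, whereas the gradient of the extended functional in $u$ is $\partial H/\partial u_e=-l_e\widetilde K_e$, so the flow is only gradient-like, not literally a negative gradient flow; the monotonicity of $H$ still holds and is what the paper uses.)
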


\begin{remark}\label{rmk:MainTheorem}
\begin{enumerate}
    \item[(1)] Since the metric $l$ is a hyper-ideal metric, the metric  space $S(M, \mathcal{T} , l)$ is a compact hyperbolic $3$-manifold with totally geodesic boundary. Suppose that $N$ is a compact $3$-manifold with boundary admitting an ideal triangulation $\mathcal{T}_N$ whose edges all have valence at least $9$. Set $M=C(N)$ and $\mathcal{T}=\mathcal{T}_N$. Then \cref{thm:Begining} follows directly from \cref{thm:MainTheorem}.
    
    \item[(2)] Compared with \cite[Theorem~1.9]{KeGeHua}, \cref{thm:MainTheorem} yields sharper quantitative bounds for the zero-curvature hyper-ideal metric.
    
\end{enumerate}
\end{remark}

The paper is organized as follows. In \cref{sec:Preliminaries}, we introduce some results of generalized hyper-ideal tetrahedra and the extended Ricci flow \eqref{eq:ExtendedRicciFlow}, as obtained by \cite{LuoYang} and \cite{KeGeHua}. In \cref{sec:PropertiesDihedralAngles}, we develop some properties of the dihedral angles of a generalized hyper-ideal tetrahedron. In \cref{sec:LowerBounds} and \cref{sec:UpperBounds}, we study the lower and upper bounds of the edge lengths along the extended Ricci flow, respectively. Finally, in \cref{sec:Proof}, we prove the main result \cref{thm:MainTheorem}.

\section{Preliminaries}\label{sec:Preliminaries}

\subsection{Generalized hyper-ideal tetrahedra}

The following is an alternative description of hyper-ideal tetrahedra given by \cite{MR3424676}. Let $\mathbb{K}^3\subset\mathbb{R}^3$ be the open ball representing the Klein model of $\mathbb{H}^3$. A hyper-ideal hyperbolic tetrahedron can be constructed as follows: Let $\mathcal{P}\subset\mathbb{R}^3$ be a convex Euclidean tetrahedron such that all four vertices $v_i$ $(i=1,2,3,4)$ of $\mathcal{P}$ lie in $\mathbb{R}^3- \overline{\mathbb{K}^3}$ and all edges of $\mathcal{P}$ have a nonempty intersection with $\mathbb{K}^3$. Let $C_i$ denote the cone with apex $v_i$ that is tangent to the boundary $\partial \mathbb{K}^3$. Let $\pi_i$ be the half-space not containing $v_i$ such that $\partial \pi_i \cap \partial \mathbb{K}^3 = C_i \cap \partial \mathbb{K}^3$. The truncated tetrahedron $P:= \mathcal{P} \cap \bigcap_i \pi_i$ is then a hyper-ideal tetrahedron in hyperbolic space, as shown in Figure \ref{fig:Klein}.

\begin{figure}
    \centering
    \vspace{-10pt} 
    \includegraphics[width=0.7\linewidth]{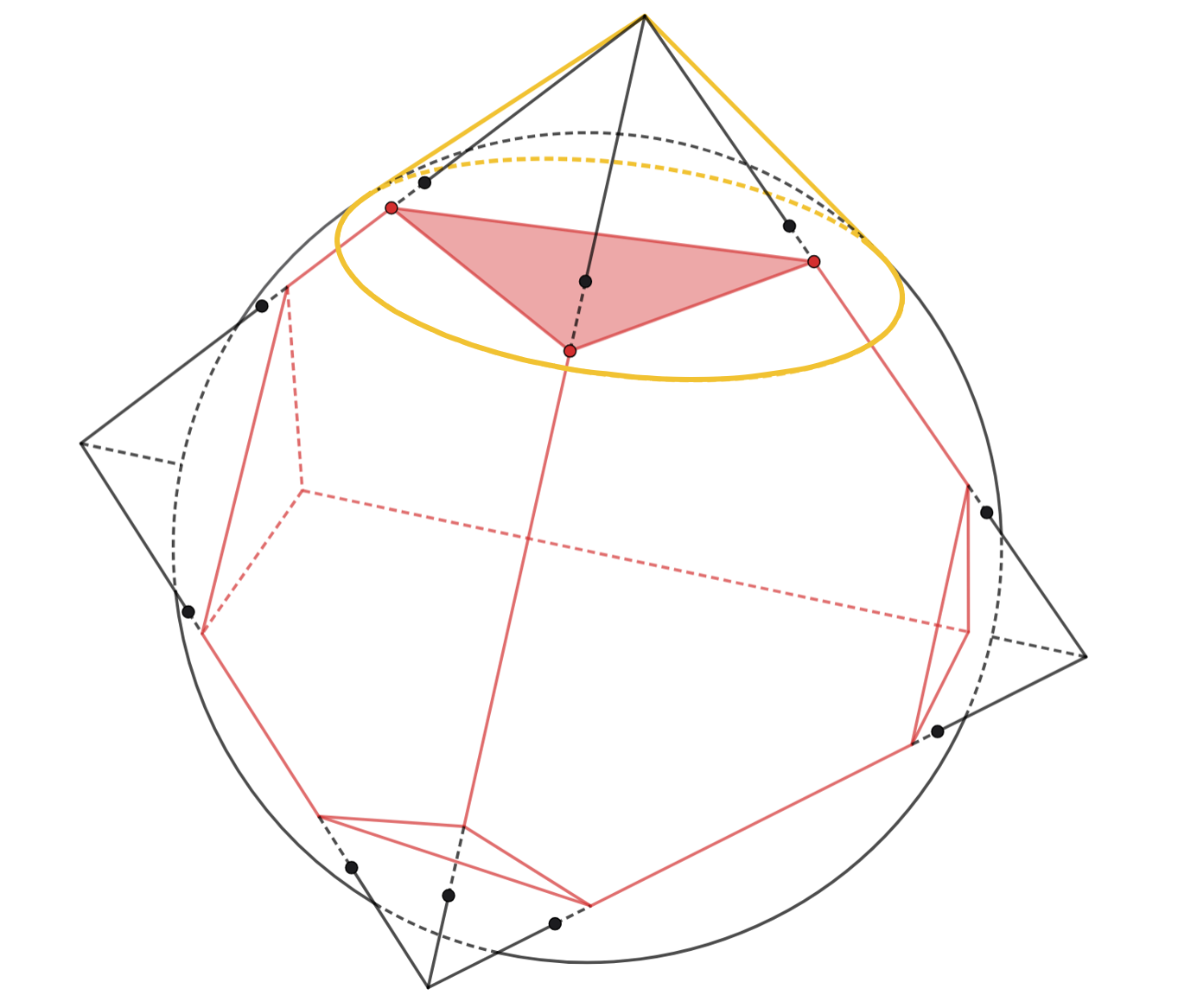}
    \vspace{-10pt} 
    \caption{A hyper-ideal tetrahedron in the Klein model.}
    \label{fig:Klein}
\end{figure}

We now recall several key results that characterize the space of hyper‑ideal tetrahedra. Details can be found in \cite{LuoYang}. Consider a combinatorial tetrahedron with vertex set $\{1,2,3,4\}$ and a hyper‑ideal tetrahedron $\sigma$ based on it. We adopt the notation that $\Delta_i$ ($e_{ij}$, $l_{ij}$ and $\alpha_{ij}$, resp.) represents a vertex triangle (an edge, the edge length of $e_{ij}$ and a dihedral angle, resp.) as in the introduction, and denote by $H_{ijk}$ the hexagonal face adjacent to $e_{ij}$, $e_{ik}$ and $e_{jk}$, where $\{i,j,k\}\subset\{1,2,3,4\}$.

\begin{proposition}[{\cite{MR1943885,MR1075164}}]
Let $\sigma$ be a hyper-ideal tetrahedron. Then
\begin{enumerate}
    \item[\rm(1)] The isometry class of $\sigma$ is determined by its dihedral angle vector $(\alpha_{12},\dots,\alpha_{34})$ in $\mathbb{R}^6$, which satisfies the condition that $\alpha_{ij}>0$ for all $\{i,j\}\subset\{1,2,3,4\}$, and $\sum_{j\neq i}\alpha_{ij}<\pi$ for all $1\leq i\leq4$.
    \item[\rm(2)] Conversely, given any $(\alpha_{12},\dots,\alpha_{34})\in\mathbb{R}_{>0}^6$ satisfying that $\sum_{j\neq i}\alpha_{ij}<\pi$ for each $1\leq i\leq4$, there exists a hyper-ideal tetrahedron whose dihedral angles are given by $\alpha_{ij}$.
    \item[\rm(3)] The isometry class of $\sigma$ is also determined by its edge length vector $(l_{12},\dots,l_{34})\in\mathbb{R}_{>0}^6$.
\end{enumerate}
\end{proposition}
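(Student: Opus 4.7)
The plan is to treat the three assertions separately, leveraging the Klein-model construction in Figure~\ref{fig:Klein} together with the rigidity theory of right-angled hyperbolic hexagons. I would establish the necessity of the angle conditions and uniqueness in part~(1) first, then prove part~(3), and finally tackle the existence in part~(2), which is the main difficulty.

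\textbf{Necessity and uniqueness in (1).} I would examine the vertex triangle $\Delta_i = \partial\pi_i \cap \sigma$ obtained by truncation at $v_i$. Because each hexagonal face $H_{ijk}$ meets the truncating plane $\partial\pi_i$ at a right angle, the interior angles of the hyperbolic triangle $\Delta_i$ equal the three dihedral angles of $\sigma$ along the edges incident to $v_i$, namely $\alpha_{ij}$, $\alpha_{ik}$, $\alpha_{il}$ with $\{i,j,k,l\}=\{1,2,3,4\}$. Positivity of the $\alpha_{ij}$ is immediate from convexity of $\sigma$, and the Gauss--Bonnet identity for a hyperbolic triangle forces $\sum_{j\neq i}\alpha_{ij}<\pi$. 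For uniqueness, each face $H_{ijk}$ is a right-angled hyperbolic hexagon whose three alternating interior angles at the vertex triangles are the prescribed $\alpha_{ij},\alpha_{ik},\alpha_{jk}$; by the classical rigidity of right-angled hexagons, such a hexagon is uniquely determined, and the four faces then assemble into a unique tetrahedron up to hyperbolic isometry.

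\textbf{Part (3).} The argument here is dual in spirit: each face $H_{ijk}$ is the right-angled hexagon whose three alternating sides along $e_{ij},e_{ik},e_{jk}$ have the prescribed lengths $l_{ij},l_{ik},l_{jk}$. Again by the rigidity of right-angled hexagons (three alternating side lengths determine the hexagon), each face, and hence every dihedral angle, is determined by the edge-length vector; uniqueness of $\sigma$ then follows from part~(1).

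\textbf{Part (2), existence.} Following Bao--Bonahon, let $\mathcal{H}$ denote the space of isometry classes of hyper-ideal tetrahedra and $\mathcal{A}\subset\mathbb{R}^6$ the open set cut out by the admissibility inequalities. Part~(1) gives a continuous injection $\Phi:\mathcal{H}\to\mathcal{A}$. Using part~(3), $\mathcal{H}$ is naturally a six-dimensional open manifold parametrized by edge lengths, so $\Phi$ is a continuous injection between open subsets of $\mathbb{R}^6$, hence (by invariance of domain) a homeomorphism onto its image. Surjectivity would follow from properness of $\Phi$: if a sequence of hyper-ideal tetrahedra has dihedral angle vector converging into the interior of $\mathcal{A}$, then the sequence subconverges to a hyper-ideal tetrahedron. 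Once properness is in place, $\Phi(\mathcal{H})$ is both open and closed in the connected set $\mathcal{A}$ and therefore equals $\mathcal{A}$. The main obstacle is precisely this properness step: one has to rule out subtle degenerations of the Euclidean tetrahedron $\mathcal{P}$ in the Klein model, such as a vertex $v_i$ drifting onto $\partial\mathbb{K}^3$ (producing an ideal rather than hyper-ideal vertex), an edge of $\mathcal{P}$ pulling away from $\mathbb{K}^3$, or a face of $\mathcal{P}$ becoming tangent to $\partial\mathbb{K}^3$, while keeping the dihedral angles bounded away from the walls of $\mathcal{A}$. An alternative route runs a variational argument using the Schläfli formula and convexity of the co-volume functional to obtain a critical point realizing any admissible angle assignment.
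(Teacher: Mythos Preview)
The paper does not prove this proposition at all; it is quoted from the references \cite{MR1943885,MR1075164} as a known result, so there is no ``paper's proof'' to compare against. I will therefore assess your outline on its own merits.

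Your overall strategy---necessity via Gauss--Bonnet on the vertex triangles, uniqueness via rigidity of the faces, and existence via invariance of domain plus properness---is the standard one and is essentially correct. There is, however, a genuine confusion in your uniqueness argument for part~(1). You write that ``each face $H_{ijk}$ is a right-angled hyperbolic hexagon whose three alternating interior angles at the vertex triangles are the prescribed $\alpha_{ij},\alpha_{ik},\alpha_{jk}$.'' This is not right: by definition every interior angle of $H_{ijk}$ equals $\pi/2$; the dihedral angles $\alpha_{ij}$ are angles between \emph{pairs} of hexagonal faces and do not appear as angles inside any single hexagon. The correct route is the one you already set up at the start: the vertex triangle $\Delta_i$ is a genuine hyperbolic triangle with interior angles $\alpha_{ij},\alpha_{ik},\alpha_{il}$, and a hyperbolic triangle is determined up to isometry by its angles. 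This fixes the side lengths of each $\Delta_i$; those sides are exactly the three alternating ``short'' sides of the adjacent right-angled hexagons, and rigidity of right-angled hexagons (three alternating side lengths determine the hexagon) then yields the edge lengths $l_{ij}$. Uniqueness of $\sigma$ follows.

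Your argument for part~(3) is fine once you fill in one step you glossed over: knowing the hexagon $H_{ijk}$ does not directly give a dihedral angle; rather, it gives the remaining three side lengths, which are sides of the vertex triangles, and SSS congruence for hyperbolic triangles then determines the $\Delta_i$ and hence the $\alpha_{ij}$. Your sketch of part~(2) is an honest outline with the difficulty (properness) clearly flagged; completing it requires the degeneration analysis you list, which is precisely what Bao--Bonahon carry out.
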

Thus, the space of isometry classes of hyper‑ideal tetrahedra, when parametrized by dihedral angles, is the open convex polytope
\begin{equation}
    \mathscr{B} = \Bigl\{ (\alpha_{12},\dots,\alpha_{34}) \in \mathbb{R}_{>0}^6 \ \Big|\ 
    \sum_{j\neq i}\alpha_{ij} < \pi \ \text{for all } i=1,2,3,4 \Bigr\}.
\end{equation}

To define the extended dihedral angles, we first recall the formula expressing dihedral angles in terms of edge lengths for hyper‑ideal tetrahedra; see \cite[Lemma 4.3]{LuoYang} for details.

\begin{lemma}
Let $l=(l_{12},\dots,l_{34})\in\mathcal{L}$ be an edge length vector representing a hyper-ideal tetrahedron and let $\{i,j,k,h\}=\{1,2,3,4\}$. Set $x_{ij}=\cosh l_{ij}$, with the convention that $l_{ij}=l_{ji}$ for $i\neq j$. Define the function $\phi_{ij}:\mathcal{L}\to\mathbb{R}$ by
\begin{equation}\label{eq:phiFormula}
\phi_{ij}=\frac{x_{ik}x_{ih}+x_{jk}x_{jh}+x_{ij}x_{ik}x_{jh}+x_{ij}x_{ih}x_{jk}-x_{ij}^2x_{kh}+x_{kh}}{\sqrt{2x_{ij}x_{ik}x_{jk}+x_{ij}^2+x_{ik}^2+x_{jk}^2-1}\sqrt{2x_{ij}x_{ih}x_{jh}+x_{ij}^2+x_{ih}^2+x_{jh}^2-1}},
\end{equation}
Then the dihedral angle at the edge $e_{ij}$ is given by $\alpha_{ij}=\arccos(\phi_{ij})$.
\end{lemma}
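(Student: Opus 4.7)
The plan is to derive the formula via the Gram matrix of the four hyperplanes supporting the vertex triangles, working in the Minkowski hyperboloid model of $\mathbb{H}^3$. To each vertex triangle $\Delta_a$ of $\sigma$ I attach a spacelike unit vector $u_a\in\mathbb{R}^{3,1}$ with $\langle u_a,u_a\rangle=1$, oriented so that $\Delta_a=\{x\in\mathbb{H}^3:\langle x,u_a\rangle=0\}$ and $u_a$ points outward from $\sigma$. Since $e_{ab}$ is the common perpendicular of the ultraparallel hyperplanes supporting $\Delta_a$ and $\Delta_b$, the standard identity for spacelike unit normals of ultraparallel planes at hyperbolic distance $l_{ab}$ yields $\langle u_a,u_b\rangle=\pm x_{ab}$, with a definite sign determined by the outward orientation. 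Thus the $4\times 4$ Gram matrix $G=(\langle u_a,u_b\rangle)$ has diagonal entries $1$ and off-diagonal entries that are signed versions of $x_{ab}=\cosh l_{ab}$.

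Next I encode each hexagonal face $H_{abc}$ by a spacelike vector $w_{abc}\in\mathbb{R}^{3,1}$ whose annihilator hyperplane supports $H_{abc}$. Because the hexagonal face meets each of $\Delta_a,\Delta_b,\Delta_c$ at a right angle, $w_{abc}$ must be Minkowski-orthogonal to $u_a,u_b,u_c$, so up to a unit factor it is the Lorentzian triple product $u_a\star u_b\star u_c$. Its squared norm equals $|D_{abc}|$, where $D_{abc}$ is the $3\times 3$ principal minor of $G$ indexed by $\{a,b,c\}$. A direct cofactor expansion gives
\begin{equation*}
|D_{abc}|=2x_{ab}x_{ac}x_{bc}+x_{ab}^{2}+x_{ac}^{2}+x_{bc}^{2}-1,
\end{equation*}
which matches the square of each factor under the radicals in the denominator of $\phi_{ij}$.

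With these normals in hand, the dihedral angle at $e_{ij}=H_{ijk}\cap H_{ijh}$ is recovered from the Lorentzian formula
\begin{equation*}
\cos\alpha_{ij}=-\frac{\langle u_i\star u_j\star u_k,\,u_i\star u_j\star u_h\rangle}{\sqrt{|D_{ijk}|}\sqrt{|D_{ijh}|}}.
\end{equation*}
By a Binet--Cauchy-type identity for the Lorentzian triple product, the numerator equals (up to an overall sign that I track with the conventions above) the $3\times 3$ determinant
\begin{equation*}
\det\!\begin{pmatrix}
\langle u_i,u_i\rangle & \langle u_i,u_j\rangle & \langle u_i,u_h\rangle\\
\langle u_j,u_i\rangle & \langle u_j,u_j\rangle & \langle u_j,u_h\rangle\\
\langle u_k,u_i\rangle & \langle u_k,u_j\rangle & \langle u_k,u_h\rangle
\end{pmatrix}.
\end{equation*}
Expanding this determinant (for instance along the first row) and substituting $1$ on the diagonal and $\pm x_{ab}$ off the diagonal produces the polynomial
\begin{equation*}
x_{ik}x_{ih}+x_{jk}x_{jh}+x_{ij}x_{ik}x_{jh}+x_{ij}x_{ih}x_{jk}-x_{ij}^{2}x_{kh}+x_{kh}
\end{equation*}
up to sign, which is precisely the numerator of $\phi_{ij}$; combining with the two denominator factors yields $\cos\alpha_{ij}=\phi_{ij}$.

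The main obstacle is the bookkeeping of signs throughout: the outward-normal convention fixes whether $\langle u_a,u_b\rangle$ equals $+x_{ab}$ or $-x_{ab}$, and the Lorentzian $\star$ contributes its own signature, so these must conspire to yield $+\phi_{ij}$ rather than $-\phi_{ij}$. I would resolve this by fixing orientations with explicit reference to the truncated tetrahedron of \cref{fig:Klein}, and then confirming the sign on a single explicit example, e.g.\ the regular hyper-ideal tetrahedron (all $l_{ab}$ equal), whose dihedral angle can be computed independently; continuity of both $\alpha_{ij}$ and $\arccos\phi_{ij}$ on the connected domain $\mathcal{L}$ then upgrades the agreement from one point to all of $\mathcal{L}$.
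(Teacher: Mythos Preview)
The paper does not supply its own proof of this lemma; it simply records the formula and attributes it to \cite[Lemma~4.3]{LuoYang}. Your Gram--matrix approach in the hyperboloid model is exactly the standard derivation one finds in that literature: the outward unit normals $u_a$ to the truncation planes satisfy $\langle u_a,u_b\rangle=-x_{ab}$, the hexagonal-face normals are the Lorentzian triple products $u_a\star u_b\star u_c$, and the Binet--Cauchy identity reduces both the numerator and the two denominator factors to the indicated $3\times 3$ minors of the Gram matrix. I checked your determinant expansions and they do reproduce the stated polynomials, so the argument is sound.

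The only soft spot is the one you already flag: the signs. With the outward-normal convention the off-diagonal Gram entries are $-x_{ab}$, the principal $3\times3$ minor $D_{abc}$ is then \emph{negative} (its absolute value is the radicand), and the Lorentzian Binet--Cauchy identity carries a signature-dependent sign as well; these must be tracked together rather than fixed independently. Your proposed cure---compute both sides on the regular hyper-ideal tetrahedron and invoke connectedness of $\mathcal{L}$---is perfectly legitimate, but it would be cleaner (and closer to how the cited reference proceeds) to fix all conventions at the outset and carry the single global sign through the computation, so that no appeal to an example is needed.
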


Recall that every $l\in\mathbb{R}^6_{>0}$ corresponds to a generalized hyper-ideal tetrahedron. If $l\in\mathcal{L}$, then the tetrahedron is a hyper-ideal tetrahedron. Otherwise, if $l\in\mathbb{R}_{>0}^6-\mathcal{L}$, then the tetrahedron is a degenerate hyper-ideal tetrahedron. More generally, by allowing edge lengths to be zero, one obtains a larger class of objects parametrized by $\mathbb{R}_{\ge 0}^6$, which we refer to as \emph{generalized hyper‑ideal tetrahedra in the wide sense}.

\begin{definition}\label{def:ExtendedDihedralAngle}
Let $l \in \mathbb{R}_{\ge 0}^6$ represent a generalized hyper‑ideal tetrahedron in the wide sense. For any distinct vertices $\{i,j\} \subset \{1,2,3,4\}$, let $\phi_{ij}$ given by \eqref{eq:phiFormula} and define
$$
\alpha_{ij} := \arccos\left(\max\{-1,\; \min\{\phi_{ij},\,1\}\}\right).
$$
We call $\alpha_{ij}$ the \emph{extended dihedral angle} at the edge $e_{ij}$.
\end{definition}
Note that the extended dihedral angle $\alpha_{ij}$ was denoted by $\widetilde{\alpha}_{ij}$ in the introduction in order to distinguish it from the usual dihedral angle. In the remainder of this paper, we shall simply call it the dihedral angle when no confusion arises.
It is evident that both $\phi_{ij}$ and $\alpha_{ij}$ are continuous functions on $\mathbb{R}_{\ge 0}^6$. Furthermore, for a hyper-ideal tetrahedron, $\alpha_{ij}$ coincides with the usual dihedral angle at the edge $e_{ij}$.

The space $\mathcal{L}\subset\mathbb{R}^6$ can be characterized by the range of $\phi_{ij}$ as follows.

\begin{proposition}[{\cite[Proposition 4.4 and Lemma 4.7]{LuoYang}}]\label{prop:L-description}
The space of hyper‑ideal tetrahedra parametrized by edge lengths is given by
$$
\mathcal{L} = \big\{ l \in \mathbb{R}_{\ge 0}^6 \ \big|\ 
\phi_{ij}(l) \in (-1,1) \ \text{for all } \{i,j\}\subset\{1,2,3,4\} \big\}.
$$
\end{proposition}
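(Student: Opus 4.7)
The forward inclusion is immediate: if $l \in \mathcal{L}$, then $l$ is the edge length vector of a genuine hyper-ideal tetrahedron whose dihedral angles $\alpha_{ij} \in (0,\pi)$ satisfy $\cos\alpha_{ij} = \phi_{ij}(l)$ by \eqref{eq:phiFormula}, so $\phi_{ij}(l) \in (-1,1)$.

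For the reverse inclusion, my plan is to recognize formula \eqref{eq:phiFormula} as the first hyperbolic law of cosines applied to the vertex triangle $\Delta_i$. For each triple $\{i,j,k\}$ set $A_{ijk} := 2x_{ij}x_{ik}x_{jk} + x_{ij}^2 + x_{ik}^2 + x_{jk}^2 - 1$ (the quantity appearing under each square root in \eqref{eq:phiFormula}), and introduce candidate short-side lengths of the right-angled hexagon $H_{ijk}$ via
\begin{equation*}
\cosh d^{(i)}_{jk} = \frac{x_{ij}x_{ik} + x_{jk}}{\sqrt{(x_{ij}^2-1)(x_{ik}^2-1)}}, \qquad \sinh d^{(i)}_{jk} = \frac{\sqrt{A_{ijk}}}{\sqrt{(x_{ij}^2-1)(x_{ik}^2-1)}}.
\end{equation*}
A direct (but somewhat lengthy) algebraic manipulation should then show that \eqref{eq:phiFormula} rewrites as
\begin{equation*}
\phi_{ij}(l) = \frac{\cosh d^{(i)}_{jk}\cosh d^{(i)}_{jh} - \cosh d^{(i)}_{kh}}{\sinh d^{(i)}_{jk}\sinh d^{(i)}_{jh}},
\end{equation*}
which is the first hyperbolic law of cosines applied to the (hypothetical) hyperbolic triangle with side lengths $d^{(i)}_{jk},d^{(i)}_{jh},d^{(i)}_{kh}$.

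Given $l \in \mathbb{R}_{\geq 0}^6$ with $\phi_{ij}(l) \in (-1,1)$ for every pair, one sees that $A_{ijk} > 0$ for every triple (since otherwise $\phi_{ij}$ would be undefined), so each $d^{(i)}_{jk}$ is a well-defined positive number. The bound $\phi_{ij}(l) \in (-1,1)$ is then exactly the triangle inequality $|d^{(i)}_{jk} - d^{(i)}_{jh}| < d^{(i)}_{kh} < d^{(i)}_{jk} + d^{(i)}_{jh}$, which ensures that the three $d$'s form the sides of an honest hyperbolic triangle $T_i$ at each vertex $i$. The interior angles of $T_i$ are $\alpha_{ij} = \arccos\phi_{ij}(l)$, $\alpha_{ik}$, $\alpha_{ih}$, and by the Gauss-Bonnet theorem sum to strictly less than $\pi$. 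Assembling this at every vertex yields exactly the hypotheses of Proposition 2.1(2), producing a hyper-ideal tetrahedron $\sigma$ with dihedral angles $\alpha_{ij}$ and some edge length vector $l' \in \mathcal{L}$. The identity $\phi_{ij}(l') = \cos\alpha_{ij} = \phi_{ij}(l)$ combined with the dual right-angled hexagon formula (expressing each long side of $H_{ijk}$ in terms of its three short sides, which for both $l$ and $l'$ are the same $d^{(i)}_{jk}, d^{(j)}_{ik}, d^{(k)}_{ij}$) then forces $l = l'$, so $l \in \mathcal{L}$.

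The main obstacle is the algebraic identification of \eqref{eq:phiFormula} with the hyperbolic law of cosines for $\Delta_i$; this requires clearing denominators and a careful polynomial expansion, and it is the step where $A_{ijk}$ emerges naturally as $\sinh^2 d^{(i)}_{jk}$ up to a positive factor. Once this identity is established, everything else consists of standard conversions between existence of hyperbolic triangles, existence of right-angled hexagons, and existence of hyper-ideal tetrahedra, all available through Proposition 2.1.
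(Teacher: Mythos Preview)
The paper does not prove this proposition; it is quoted from Luo--Yang \cite{LuoYang} (Proposition~4.4 and Lemma~4.7 there), so there is no in-paper argument to compare against. Your approach is in fact the standard one (and essentially the one Luo--Yang use): interpret \eqref{eq:phiFormula} as the hyperbolic law of cosines for the vertex triangle $\Delta_i$, using the right-angled hexagon formula to define the short sides $d^{(i)}_{jk}$.

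Your algebraic identification is correct: expanding $(x_{ij}x_{ik}+x_{jk})(x_{ij}x_{ih}+x_{jh})-(x_{ik}x_{ih}+x_{kh})(x_{ij}^2-1)$ gives exactly the numerator in \eqref{eq:phiFormula}, and the denominators match since $A_{ijk}=(x_{ij}x_{ik}+x_{jk})^2-(x_{ij}^2-1)(x_{ik}^2-1)$. One point you gloss over: for $d^{(i)}_{jk}$ to be defined you need $x_{ij},x_{ik}>1$, i.e.\ all edge lengths strictly positive. This actually follows from the hypothesis: if $x_{ij}=1$ then a direct substitution into \eqref{eq:phiFormula} (or equivalently \eqref{eq:phi-x1-x6} with $x_1=1$) gives $\phi_{ij}=1$, contradicting $\phi_{ij}\in(-1,1)$. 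You should state this explicitly. The final step---recovering $l=l'$ from the coincidence of the short sides---is correct because a right-angled hexagon is determined by any triple of alternating sides, and the vertex triangles of $\sigma$ are congruent to your $T_i$ by angle-angle-angle rigidity of hyperbolic triangles; your sketch of this is a bit terse but sound.
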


\subsection{Co-volume of a generalized hyper-ideal tetrahedron}

Let $\operatorname{vol}(\cdot)$ denote the hyperbolic volume of a hyper-ideal tetrahedron. Following Casson-Rivin's angle structure theory (see, for example, \cite{MR1283870,MR1985831,LuoAngleStructure,MR2369192}), the volume can be naturally regarded as a real-valued function on the set $\mathscr{B}$ of dihedral angles. It satisfies the Schl\"afli formula (see \cite{MR1678473} for a more general setting)
\begin{equation}\label{eq:Schlafli}
\frac{\partial \operatorname{vol}}{\partial \alpha_{ij}} = -\frac{1}{2}l_{ij}.
\end{equation}

Let $\mathcal{L} \subset \mathbb{R}_{>0}^6$ be the space of hyper-ideal tetrahedra parametrized by their edge length vector $l = (l_{12}, \dots, l_{34})$. The Legendre transform of the volume, known as the \emph{co-volume functional}, is defined by
\begin{equation}
\operatorname{cov}(l) = 2 \operatorname{vol}(l) + \sum_{i<j} \alpha_{ij} l_{ij},\quad l\in\mathcal{L}.
\end{equation}

\begin{proposition}[{\cite[Corollary 5]{LuoFlowWithBoundary}}]
The functional $\operatorname{cov}: \mathcal{L} \to \mathbb{R}$ is smooth and locally strictly convex.
\end{proposition}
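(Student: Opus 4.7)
The plan is to verify the two assertions separately, using the Schl\"afli formula to reduce the strict convexity of $\operatorname{cov}$ to the strict concavity of the volume viewed as a function of dihedral angles (a classical fact that I would invoke as a black box).

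For smoothness, I would first note that each function $\phi_{ij}$ defined by \eqref{eq:phiFormula} is a rational expression in $x_{kh}=\cosh l_{kh}$, hence smooth on all of $\mathbb{R}^6_{>0}$. On the subset $\mathcal{L}$, \cref{prop:L-description} guarantees $\phi_{ij}(l)\in(-1,1)$, so $\alpha_{ij}=\arccos(\phi_{ij})$ is smooth on $\mathcal{L}$ and takes values in the open polytope $\mathscr{B}$. Since $\operatorname{vol}$ is smooth as a function of dihedral angles on $\mathscr{B}$, the composition $\operatorname{vol}(l)=\operatorname{vol}(\alpha(l))$ is smooth, and thus $\operatorname{cov}(l)=2\operatorname{vol}(l)+\sum_{i<j}\alpha_{ij}(l)\,l_{ij}$ is smooth on $\mathcal{L}$.

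For local strict convexity, the next step is to compute the first derivative using the Schl\"afli formula \eqref{eq:Schlafli}. A direct calculation gives
\begin{equation*}
d\operatorname{cov}=2\,d\operatorname{vol}+\sum_{i<j}\bigl(l_{ij}\,d\alpha_{ij}+\alpha_{ij}\,dl_{ij}\bigr)=-\sum_{i<j}l_{ij}\,d\alpha_{ij}+\sum_{i<j}l_{ij}\,d\alpha_{ij}+\sum_{i<j}\alpha_{ij}\,dl_{ij}=\sum_{i<j}\alpha_{ij}\,dl_{ij},
\end{equation*}
so $\partial\operatorname{cov}/\partial l_{ij}=\alpha_{ij}$. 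Consequently the Hessian of $\operatorname{cov}$ is the (automatically symmetric) Jacobian matrix $J:=\bigl(\partial\alpha_{ij}/\partial l_{kh}\bigr)$. To show $J$ is positive definite, I would appeal to the classical strict concavity of the hyperbolic volume of a hyper-ideal tetrahedron as a function of its dihedral angles on $\mathscr{B}$ (this is the Rivin-type result extended to the hyper-ideal setting, which is the deep input here). Differentiating the Schl\"afli identity yields
\begin{equation*}
\frac{\partial^2\operatorname{vol}}{\partial\alpha_{kh}\,\partial\alpha_{ij}}=-\frac{1}{2}\frac{\partial l_{ij}}{\partial\alpha_{kh}},
\end{equation*}
so the inverse Jacobian $J^{-1}=\bigl(\partial l_{ij}/\partial\alpha_{kh}\bigr)=-2\,\mathrm{Hess}_{\alpha}(\operatorname{vol})$ is symmetric and positive definite. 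Inverting, $J$ is symmetric positive definite, proving local strict convexity.

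The main obstacle, and the only nontrivial ingredient, is the strict concavity of the volume functional in the dihedral angle variables; smoothness and the Legendre-type identity $\nabla\operatorname{cov}=\alpha$ are essentially bookkeeping built on the Schl\"afli formula. Since this concavity is already established in the literature on angle structures for hyper-ideal tetrahedra (and is exactly what is being packaged into \cite{LuoFlowWithBoundary}), I would cite it rather than reprove it, and the remainder of the argument is a clean two-line linear algebra consequence.
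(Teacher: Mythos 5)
This proposition is quoted verbatim from Luo \cite{LuoFlowWithBoundary} and the paper supplies no proof of its own; your argument---smoothness of $\alpha_{ij}=\arccos(\phi_{ij})$ on $\mathcal{L}$ since $\phi_{ij}\in(-1,1)$ there, the Legendre-type identity $\partial\operatorname{cov}/\partial l_{ij}=\alpha_{ij}$ from the Schl\"afli formula, and positive definiteness of the Hessian $\bigl(\partial\alpha_{ij}/\partial l_{kh}\bigr)$ as the inverse of $-2\,\mathrm{Hess}_{\alpha}(\operatorname{vol})$---is precisely the standard route underlying that citation, with the strict concavity of $\operatorname{vol}$ in the dihedral angles on $\mathscr{B}$ correctly identified as the one deep input to be quoted from the literature. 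The only step worth spelling out is the inversion: one should note that the angle and length parametrizations are mutually inverse (local) diffeomorphisms between $\mathcal{L}$ and $\mathscr{B}$, which follows from the bijective correspondence in the characterization of hyper-ideal tetrahedra together with the invertibility of $\partial l/\partial\alpha=-2\,\mathrm{Hess}_{\alpha}(\operatorname{vol})$ supplied by the very concavity you invoke, so the chain-rule identification $J^{-1}=\bigl(\partial l_{ij}/\partial\alpha_{kh}\bigr)$ is legitimate and the proof is sound.
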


The form of the co-volume functional can be dated back to \cite{MR1106755,MR1815214,MR3375525}.
By the Schl\"afli formula, the gradient of the co-volume functional is given by 
\begin{equation}\label{eq:cov-PartialDifferential}
    \frac{\partial \operatorname{cov}}{\partial l_{ij}} = \alpha_{ij}.
\end{equation}
Thus, the differential 1-form $\omega = \sum_{i<j} \alpha_{ij} dl_{ij}=d\operatorname{cov}$ is closed on $\mathcal{L}$, and $\operatorname{cov}(l)$ can be recovered via integration of $\omega$ (see \cite{LuoFlowWithBoundary}).

Since $\mathcal{L}$ is not a convex subset of $\mathbb{R}^6$, $\operatorname{cov}$ may not be globally convex. To address this, Luo and Yang \cite{LuoYang} extended $\operatorname{cov}$ to a $C^1$-smooth and convex function on the entire space $\mathbb{R}^6$. For each $l=(l_{12}, \dots, l_{34}) \in \mathbb{R}^6$, let $l^+ = (l_{12}^+, \dots, l_{34}^+)\in\mathbb{R}^6_{\geq0}$ where $l_{ij}^+ = \max\{l_{ij}, 0\}$. We extend the dihedral angle functions to $\mathbb{R}^6$ by setting 
$$
\alpha_{ij}(l) = \alpha_{ij}(l^+), \quad\forall l\in\mathbb{R}^6,
$$
where $\alpha_{ij}(l^+)$ is the extended dihedral angle defined in \cref{def:ExtendedDihedralAngle}. This defines a continuous 1-form $\mu$ on $\mathbb{R}^6$ by
\begin{equation}\label{eq:mu-form}
\mu(l) = \sum_{i<j} \alpha_{ij}(l) dl_{ij}.
\end{equation}

\begin{proposition}[{\cite[Proposition 4.10]{LuoYang}}]
The continuous differential 1-form 
$$
\mu(l)=\sum_{i< j}\alpha_{ij}(l)dl_{ij}
$$
is closed in $\mathbb{R}^6$, that is, for each Euclidean triangle $\Delta$ in $\mathbb{R}^6$, $\int_{\partial\Delta}\mu=0$.
\end{proposition}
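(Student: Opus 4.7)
The plan is to establish closedness by piecing together local exactness on a natural stratification of $\mathbb{R}^6$. On the open set $\mathcal{L}$, the formula $\partial_{l_{ij}}\operatorname{cov} = \alpha_{ij}$ from \eqref{eq:cov-PartialDifferential} identifies $\mu|_{\mathcal{L}}$ with $d\operatorname{cov}$, so $\mu$ is exact (and hence closed) on each connected component of $\mathcal{L}$; in particular $\int_{\partial\Delta}\mu=0$ for any triangle $\Delta\subset\mathcal{L}$.

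To handle the complement, I would stratify $\mathbb{R}^6$ into finitely many open strata by the signs of $l_{ij}$, $\phi_{ij}(l^+)-1$, and $\phi_{ij}(l^+)+1$ for all $\{i,j\}\subset\{1,2,3,4\}$. On each top-dimensional stratum $U$, every dihedral angle $\alpha_{ij}$ is either \emph{saturated}, i.e.\ identically $0$ or $\pi$ throughout $U$, in which case it contributes an exact summand of the form $c\,dl_{ij}$ to $\mu|_U$, or it takes values in $(0,\pi)$. For the remaining ``free'' terms, direct differentiation of \eqref{eq:phiFormula}, or alternatively the interpretation of such a stratum as a moduli space of genuinely lower-dimensional generalized hyper-ideal configurations for which an intrinsic Schl\"afli identity holds, yields the symmetry $\partial_{l_{km}}\alpha_{ij} = \partial_{l_{ij}}\alpha_{km}$ on $U$. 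Hence $\mu|_U$ is closed on each top-dimensional stratum.

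For a general Euclidean triangle $\Delta\subset\mathbb{R}^6$, a density argument combined with continuity of $\mu$ reduces to the case where $\Delta$ meets the codimension-one strata transversally in finitely many smooth arcs. Subdividing $\Delta$ along these arcs into sub-triangles $\Delta_1,\dots,\Delta_N$, each contained in a single closed top-dimensional stratum, and applying Stokes on each $\Delta_k$ (using closedness on the open strata together with continuity of $\mu$ up to the boundary) gives $\int_{\partial\Delta_k}\mu=0$ for every $k$. The contributions along the interior cuts cancel in pairs by continuity of $\mu$, leaving $\int_{\partial\Delta}\mu=0$.

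The main obstacle is establishing the Schl\"afli symmetry on the degenerate strata: as one approaches $\partial\mathcal{L}$ from within, the partials $\partial_{l_{km}}\alpha_{ij}$ may blow up, so a naive limiting argument from $\mathcal{L}$ does not work. The cleanest route seems to be to identify each degenerate stratum intrinsically with a lower-dimensional geometric moduli space (for instance, a space of hyperbolic or Euclidean polygons of smaller complexity, depending on which $\phi_{ij}$ have saturated) and invoke the corresponding intrinsic Schl\"afli formula there.
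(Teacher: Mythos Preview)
The paper does not prove this proposition; it is quoted from \cite{LuoYang} without argument, so there is no in-paper proof to compare your attempt against.

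On the merits of your outline: the stratification-plus-continuity scheme is the right shape, but the step you flag as ``the main obstacle'' is a genuine gap, and the difficulty is sharper than you state. On a stratum $U$ with some indices saturated and others free, write $\mu|_U=\mu_{\mathrm{sat}}+\mu_{\mathrm{free}}$ with $\mu_{\mathrm{sat}}$ a constant-coefficient (hence closed) form. Then $d\mu|_U=d\mu_{\mathrm{free}}$, and vanishing of the latter requires \emph{two} things: the Schl\"afli symmetry $\partial_{l_{km}}\alpha_{pq}=\partial_{l_{pq}}\alpha_{km}$ among free pairs, \emph{and} the vanishing $\partial_{l_{ij}}\alpha_{pq}=0$ whenever $\{i,j\}$ is saturated and $\{p,q\}$ is free --- otherwise the cross term $(\partial_{l_{ij}}\alpha_{pq})\,dl_{ij}\wedge dl_{pq}$ has nothing to cancel against. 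You only assert the first. The second is not at all clear from \eqref{eq:phiFormula}: the saturated length $l_{ij}$ visibly enters the expression for $\phi_{pq}$, so you must explain why $\arccos\phi_{pq}$ nonetheless has vanishing $l_{ij}$-derivative throughout $U$. Neither ``direct differentiation'' (which you do not carry out) nor the ``lower-dimensional moduli'' heuristic supplies this; the latter at best constrains dependence on the intrinsic parameters of the degenerate object, not on the ambient coordinate $l_{ij}$ that has left the nondegenerate range.

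The resolution in \cite{LuoYang} goes through a structural analysis of the degeneration: one shows that on each component of $\mathbb{R}^6_{>0}\setminus\overline{\mathcal L}$ the extended dihedral angles are in fact \emph{all} locally constant (each equal to $0$ or $\pi$), so the mixed free/saturated situation does not occur on any open set. With that in hand, $\mu$ has constant coefficients off $\overline{\mathcal L}$, equals $d\operatorname{cov}$ on $\mathcal L$, and continuity across the real-analytic interface finishes the argument. To make your approach self-contained you would need either to reproduce that degeneration analysis or to verify directly the algebraic identity $\partial_{l_{ij}}\alpha_{pq}=0$ on every stratum with $|\phi_{ij}|\ge 1$ and $|\phi_{pq}|<1$.
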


We define the extended co-volume functional $\operatorname{cov}:\mathbb{R}^6\to\mathbb{R}$ by the integral
\begin{equation}\label{eq:cov-def}
\operatorname{cov}(l)=\int_{(0,\dots,0)}^{l}\mu+\operatorname{cov}(0,\dots,0),
\end{equation}
where $\operatorname{cov}(0,\dots,0)=2\operatorname{vol}(0,\cdots , 0) = 16\Lambda\left(\frac{\pi}{4}\right)$ by the result of Ushijima \cite{Ushijima}. Here $\Lambda(a)=-\int_0^a\ln|2\sin t|\,dt$ is the Lobachevsky function. The same notation $\operatorname{cov}$ is retained as the extended functional coincides with the original co‑volume functional on $\mathcal{L}$.

\begin{proposition}[{\cite[Corollary 4.12]{LuoYang}}]\label{prop:CoVolumeConvex}
The extended co-volume functional $\operatorname{cov}:\mathbb{R}^6\to\mathbb{R}$ is a $C^1$-smooth convex function.
\end{proposition}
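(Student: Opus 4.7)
The plan is to address $C^1$-smoothness first and then convexity. For $C^1$-smoothness, observe that \eqref{eq:phiFormula} expresses $\phi_{ij}$ as a rational function of $x_{ij}=\cosh l_{ij}$; since $x_{ij}\ge 1$ throughout $\mathbb{R}^6_{\ge 0}$, the denominator stays bounded below by a positive constant, so $\phi_{ij}$ is smooth on $\mathbb{R}^6_{\ge 0}$. The clamped composition $\alpha_{ij}(l)=\arccos(\max\{-1,\min\{\phi_{ij},1\}\})$ is therefore continuous on $\mathbb{R}^6_{\ge 0}$, and further composing with the continuous projection $l\mapsto l^+$ extends it continuously to $\mathbb{R}^6$. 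Thus $\mu=\sum_{i<j}\alpha_{ij}(l)\,dl_{ij}$ is a continuous closed 1-form, the line integral \eqref{eq:cov-def} is path-independent, and the fundamental theorem of calculus along the coordinate axes yields $\partial_{l_{ij}}\operatorname{cov}(l)=\alpha_{ij}(l)$, a continuous function. Hence $\operatorname{cov}\in C^1(\mathbb{R}^6)$.

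For convexity, I would reduce to convexity along lines: for any $l^0,v\in\mathbb{R}^6$, show that $\psi(t):=\operatorname{cov}(l^0+tv)\in C^1(\mathbb{R})$ is convex, i.e.\ $\psi'(t)=\sum_{i<j}\alpha_{ij}(l^0+tv)v_{ij}$ is nondecreasing. A generic line meets the stratification of $\mathbb{R}^6$ induced by the signs of the $l_{ij}$ together with the conditions $\phi_{ij}(l^+)\in[-1,1]$ in only finitely many subintervals, since each stratum is cut out by finitely many polynomial inequalities in the $x_{ij}$. On a subinterval whose interior lies in $\mathcal{L}$, the earlier strict local convexity of $\operatorname{cov}|_{\mathcal{L}}$ forces $\psi''\ge 0$. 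On a subinterval where $\phi_{ij}$ lies outside $(-1,1)$ for some pair, the angle $\alpha_{ij}$ is locked at $0$ or $\pi$ and contributes a constant to $\psi'$, and on a subinterval where some $l_{ij}\le 0$ the dependence factors through $l_{ij}^+$, with the same effect. The remaining non-clamped variables satisfy a lower-dimensional instance of the same problem whose Hessian is positive semidefinite by a Schur-complement argument, and continuity of $\psi'$ across the stratum interfaces patches the pieces into a nondecreasing function.

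The main obstacle is verifying the behavior across the clamping interfaces, where the $\alpha_{ij}$ are only Lipschitz and the Hessian does not exist classically. A cleaner alternative that I expect to work is a smoothing argument: replace the hard clamp $\max\{-1,\min\{\cdot,1\}\}$ by a smooth monotone approximant $\chi_\varepsilon$ and define the mollified angles $\alpha_{ij}^\varepsilon(l)=\arccos(\chi_\varepsilon(\phi_{ij}(l^+)))$. The corresponding mollified co-volume $\operatorname{cov}_\varepsilon$ is $C^2$, and its Hessian is positive semidefinite by the chain rule combined with $\chi_\varepsilon'\ge 0$ and the positive-definiteness of the Hessian of $\operatorname{cov}$ on $\mathcal{L}$. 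Passing to the limit $\varepsilon\to 0$ via uniform local convergence then transfers convexity to $\operatorname{cov}$, bypassing the stratification analysis entirely.
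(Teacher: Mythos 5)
Note first that the paper does not prove this proposition at all: it is quoted verbatim from Luo--Yang (their Corollary 4.12), so the benchmark is their argument. Your $C^1$ half is essentially right and unproblematic: in the variables $x_{ij}=\cosh l_{ij}\ge 1$ the denominators in \eqref{eq:phiFormula} are bounded below, the clamped $\arccos$ and $l\mapsto l^{+}$ are continuous, and once closedness of $\mu$ is granted the fundamental-theorem-of-calculus argument along coordinate segments gives $\partial_{l_{ij}}\operatorname{cov}=\alpha_{ij}$, hence $C^1$. Just be aware that closedness is not a consequence of continuity, as your ``Thus'' suggests; it is the preceding proposition (Luo--Yang's Proposition 4.10), which you are entitled to cite but should cite explicitly.

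The convexity half has a genuine gap: all of the difficulty lives on the degenerate strata, and there your argument either asserts what has to be proved or is incorrect. When some $\phi_{ij}\notin(-1,1)$ only that single angle is frozen; the other five angles still vary along the segment, so their contribution to $\psi'$ is not constant, and off $\mathcal{L}$ there is no hyperbolic tetrahedron, no Schl\"afli formula, and no previously established convexity behind the claim that the ``remaining non-clamped variables satisfy a lower-dimensional instance \dots\ positive semidefinite by a Schur-complement argument''---that sentence is exactly the content that must be proved. Likewise, on a stratum where $l_{ij}\le 0$ the angle $\alpha_{ij}(l(t))=\alpha_{ij}(l(t)^{+})$ still varies through the other coordinates, so it does not ``contribute a constant to $\psi'$''. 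A concrete illustration that the gradient is not locally constant off $\mathcal{L}$: at $x=(1,1,1,1,1,1+s)$ with $s>0$, formula \eqref{eq:phiFormula} gives $\phi_{12}=1$, so the point lies outside $\mathcal{L}$ by \cref{prop:L-description}, while $\phi_{34}=(2-s)/(2+s)$ varies nontrivially with $s$. The mollification alternative does not rescue this: closedness of $\mu$ is a special property of the exact clamped angles (it encodes the Schl\"afli identity on $\mathcal{L}$ together with the precise way the angles lock at $0$ or $\pi$), and after replacing the clamp by $\chi_\varepsilon$ the $1$-form $\sum_{i<j}\arccos\bigl(\chi_\varepsilon(\phi_{ij}(l^{+}))\bigr)\,dl_{ij}$ is in general no longer closed, so ``$\operatorname{cov}_\varepsilon$'' is not even well defined as a potential; and even granting a potential, the chain rule produces a Jacobian of the shape $\operatorname{diag}(\cdot)\cdot D\alpha$, which is neither symmetric nor controlled by the Hessian of $\operatorname{cov}$, while off $\mathcal{L}$ there is no positivity to invoke at all. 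What is missing is precisely the ingredient the citation supplies: Luo--Yang's structural analysis of degenerate generalized hyper-ideal tetrahedra (which $\phi_{ij}$ can leave $(-1,1)$, with which signs, and how the remaining angles then behave), strong enough to show that $t\mapsto\sum_{i<j}\alpha_{ij}(l(t))\,v_{ij}$ is nondecreasing on segments meeting the degenerate region. Without that, your proposal proves $C^1$-smoothness but not convexity.
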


\subsection{Extended Ricci flows}\label{sec:ExtendedRicciFlow}
Let $(M,\mathcal{T})$ be a closed pseudo $3$-manifold, which is the quotient space of the simplicial complex $\mathscr{T} = \bigsqcup_{i=1}^n  \sigma_i$. We denote the set of edges by $E(\mathcal{T}) = \{e_1, \dots, e_m\}$, where $m$ is the number of edges in $\mathcal{T}$. The edge length vector is then given by
$$
l=(l_{e_1},l_{e_2},\cdots,l_{e_m}).
$$
Given a generalized hyper-ideal metric $l \in \mathbb{R}^E_{>0}$, the generalized Ricci curvature at edges is described by the vector
$$
\widetilde{K}(l)=\bigl(\widetilde{K}_{e_1}(l),\widetilde{K}_{e_2}(l),\dots,\widetilde{K}_{e_m}(l)\bigr).
$$
When $l$ is a hyper-ideal metric, i.e., $l\in \mathcal{L}(M,\mathcal{T})$, we simply write $K(l)$ instead of $\widetilde{K}(l)$.

Following Luo and Yang \cite{LuoYang}, the \emph{co\-volume functional} on a closed pseudo $3$--manifold $(M,\mathcal{T})$ is defined by
$$
\operatorname{cov}:\mathbb{R}^E\to\mathbb{R},\quad 
\operatorname{cov}(l)=\sum_{\hat{\sigma}\in T(\mathscr{T})}\operatorname{cov}_{\hat{\sigma}}\left(\hat{l}\big|_{E(\hat{\sigma})}\right),
$$
where $\operatorname{cov}_{\hat{\sigma}}$ is the co-volume functional of the tetrahedron $\hat{\sigma}\in T(\mathscr{T})$ defined in \eqref{eq:cov-def}. 
We further introduce the following functional:
\begin{definition}\label{def:H-functional}
Define $H:\mathbb{R}^E\to\mathbb{R}$ by
$$
H(l)=\operatorname{cov}(l)-2\pi\sum_{i\in E}l_i,\quad \forall l\in\mathbb{R}^E.
$$
\end{definition}
By \cref{prop:CoVolumeConvex}, each $\operatorname{cov}_{\hat{\sigma}}$ is a $C^1$-smooth convex function on $\mathbb{R}^6$ and is locally strictly convex on $\mathcal{L}$. Consequently, the global functional $H$ inherits these properties.

\begin{proposition}\label{prop:H-Convex}
The functional $\operatorname{cov}:\mathbb{R}^E\to\mathbb{R}$ is $C^1$-smooth and convex on $\mathbb{R}^E$. Moreover, it is smooth and locally strictly convex on $\mathcal{L}(M,\mathcal{T})$.
\end{proposition}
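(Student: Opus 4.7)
The plan is to reduce everything to the single-tetrahedron result of \cref{prop:CoVolumeConvex}. Viewing $\operatorname{cov}:\mathbb{R}^E\to\mathbb{R}$ through the definition, for each $\hat\sigma\in T(\mathscr{T})$ the assignment $l\mapsto\hat{l}|_{E(\hat\sigma)}$ is the composition of $P_E$ with the canonical restriction, hence a linear map $L_{\hat\sigma}:\mathbb{R}^E\to\mathbb{R}^{E(\hat\sigma)}\cong\mathbb{R}^6$. We then have
\[
\operatorname{cov}(l)=\sum_{\hat\sigma\in T(\mathscr{T})}\operatorname{cov}_{\hat\sigma}\bigl(L_{\hat\sigma}l\bigr).
\]
Each summand is the pullback of a $C^1$-smooth convex function on $\mathbb{R}^6$ by a linear map. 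Since $C^1$-smoothness and convexity are preserved by linear pullback and by finite summation, the $C^1$-smoothness and convexity of $\operatorname{cov}$ on all of $\mathbb{R}^E$ are immediate from \cref{prop:CoVolumeConvex}.

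For the statement on $\mathcal{L}(M,\mathcal{T})$, I would first observe that if $l\in\mathcal{L}(M,\mathcal{T})$, then $L_{\hat\sigma}l\in\mathcal{L}\subset\mathbb{R}^6$ for every $\hat\sigma$, since by definition a hyper-ideal metric assigns a genuine hyper-ideal tetrahedron to each $\hat\sigma$. On the open set $\mathcal{L}$ the tetrahedral co-volume $\operatorname{cov}_{\hat\sigma}$ is smooth (indeed $C^\infty$) and locally strictly convex, so smoothness of $\operatorname{cov}$ on $\mathcal{L}(M,\mathcal{T})$ follows. For the local strict convexity, I would compute the Hessian at $l\in\mathcal{L}(M,\mathcal{T})$ as
\[
\nabla^2\operatorname{cov}(l)=\sum_{\hat\sigma\in T(\mathscr{T})}L_{\hat\sigma}^{T}\,\nabla^2\operatorname{cov}_{\hat\sigma}(L_{\hat\sigma}l)\,L_{\hat\sigma},
\]
a sum of positive semidefinite quadratic forms, and then test it on an arbitrary $v\in\mathbb{R}^E$:
\[
v^{T}\nabla^2\operatorname{cov}(l)\,v=\sum_{\hat\sigma}(L_{\hat\sigma}v)^{T}\,\nabla^2\operatorname{cov}_{\hat\sigma}(L_{\hat\sigma}l)\,(L_{\hat\sigma}v)\ge 0.
\]

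The main (and essentially only) non-routine step is upgrading this inequality from positive semidefinite to positive definite. Each individual summand has nullspace $\ker L_{\hat\sigma}$, so one must verify that $\bigcap_{\hat\sigma}\ker L_{\hat\sigma}=\{0\}$. This is where the combinatorial structure enters: every edge $e\in E$ of $\mathcal{T}$ is incident to at least one tetrahedron of $\mathscr{T}$ (equivalently $v(e)\ge 1$), so the $e$-coordinate of any $v\in\mathbb{R}^E$ appears among the coordinates of $L_{\hat\sigma}v$ for some $\hat\sigma$. Hence $L_{\hat\sigma}v=0$ for every $\hat\sigma$ forces $v=0$, and local strict convexity of $\operatorname{cov}$ on $\mathcal{L}(M,\mathcal{T})$ follows, completing the proof.
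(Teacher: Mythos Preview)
Your proof is correct and follows the same approach the paper intends: the paper merely says that ``each $\operatorname{cov}_{\hat\sigma}$ is a $C^1$-smooth convex function on $\mathbb{R}^6$ and is locally strictly convex on $\mathcal{L}$; consequently, the global functional inherits these properties,'' without further detail. You have supplied the natural justification, including the point that $\bigcap_{\hat\sigma}\ker L_{\hat\sigma}=\{0\}$, which is exactly what is needed to pass from convexity to local strict convexity of the sum.

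One small remark: in the step where you say ``each individual summand has nullspace $\ker L_{\hat\sigma}$,'' you are implicitly using that $\nabla^2\operatorname{cov}_{\hat\sigma}$ is positive definite on $\mathcal{L}$, not merely that $\operatorname{cov}_{\hat\sigma}$ is strictly convex there. This is in fact true (it is the infinitesimal rigidity of hyper-ideal tetrahedra, and is what the cited results actually prove), but if you want to avoid invoking it, you can argue directly: for $v\neq 0$ pick $\hat\sigma$ with $L_{\hat\sigma}v\neq 0$; then $t\mapsto\operatorname{cov}_{\hat\sigma}(L_{\hat\sigma}(l+tv))$ is strictly convex near $t=0$ while all other summands are convex, so the total is strictly convex along that line.
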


Let $\hat{\sigma}\in T(\mathscr{T})$ be a tetrahedron with the generalized hyper-ideal metric given by $\hat{l}$. By \eqref{eq:cov-PartialDifferential}, for each edge $\hat{e}\sim\hat{\tau}$,
$$
\frac{\partial\operatorname{cov}_{\hat{\sigma}}}{\partial \hat{l}_{\hat{e}}}=\alpha(\hat{e}).
$$
Hence, for each $e_j\in E$,
\begin{equation}\label{eq:H-PartialDerivative}
    \begin{aligned}
\frac{\partial H}{\partial l_{e_j}}
&=\frac{\partial\operatorname{cov}(l)}{\partial l_{e_j}}-2\pi
   =\frac{\partial}{\partial l_{e_j}}\Bigl(\sum_{\hat{\sigma}\in T(\mathscr{T})}\operatorname{cov}_{\hat{\sigma}}\left(\hat{l}\big|_{E(\hat{\sigma})}\right)\Bigr)-2\pi\\
&=\sum_{\hat{\sigma}\in T(\mathscr{T})}\sum_{\hat{e}\in P_E^{-1}(e_j)}\frac{\partial\operatorname{cov}_{\hat{\sigma}}}{\partial\hat{l}_{\hat{e}}}\frac{\partial\hat{l}_{\hat{e}}}{\partial l_{e_j}}-2\pi\\
&=\sum_{\hat{e}\in P_E^{-1}(e_j)}\alpha(\hat{e})-2\pi
   =-\widetilde{K}_{e_j},
\end{aligned}
\end{equation}
where we have used $\hat{l}_{\hat{e}}=l_{P_E(\hat{e})}=l_{e_j}$ for each $\hat{e}\in P_E^{-1}(e_j)$. In particular, for any $l\in \mathcal{L}(M,\mathcal{T})$,
\begin{equation}\label{eq:H-ordinary-PartialDerivative}
\frac{\partial H}{\partial l_{e_j}}=-K_{e_j}\quad \forall e_j\in E.
\end{equation}
This shows that the extended Ricci flow \eqref{eq:ExtendedRicciFlow} is a variant of the negative gradient flow associated with the functional $H$. It follows from \eqref{eq:H-PartialDerivative} that along any solution $l(t)$ of the extended Ricci flow,
$$
\frac{dH(l(t))}{dt}
= \sum_{e_j\in E} \frac{\partial H}{\partial l_{e_j}}\,\frac{dl_{e_j}}{dt}
= -\sum_{e_j\in E} \widetilde{K}_{e_j}^{\,2}\bigl(l(t)\bigr)\,l_{e_j}(t) \leq 0,
$$
which yields the following proposition.
\begin{proposition}\label{prop:H-non-ecreasing}
The functional $H$ is non‑increasing along the extended Ricci flow \eqref{eq:ExtendedRicciFlow}.
\end{proposition}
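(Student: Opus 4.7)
The statement is essentially immediate from the computation already displayed immediately above its statement; the plan is simply to organize that computation into a formal proof. First, I would invoke \cref{prop:H-Convex}, which guarantees that $H$ is $C^1$ on $\mathbb{R}^E$, so that the chain rule applies to the composition $t \mapsto H(l(t))$ along any solution of the extended Ricci flow \eqref{eq:ExtendedRicciFlow}. This yields
$$
\frac{d}{dt} H(l(t)) = \sum_{e_j \in E} \frac{\partial H}{\partial l_{e_j}}(l(t)) \cdot \frac{d l_{e_j}}{dt}(t).
$$

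Next, I would substitute the two identities already available in this subsection: equation \eqref{eq:H-PartialDerivative} gives $\partial H/\partial l_{e_j} = -\widetilde{K}_{e_j}$, while the flow equation \eqref{eq:ExtendedRicciFlow} gives $d l_{e_j}/dt = \widetilde{K}_{e_j}(l(t))\, l_{e_j}(t)$. Combining these yields
$$
\frac{d}{dt} H(l(t)) = -\sum_{e_j \in E} \widetilde{K}_{e_j}(l(t))^2\, l_{e_j}(t).
$$

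Finally, since any solution of \eqref{eq:ExtendedRicciFlow} remains in $\mathbb{R}^E_{>0}$ by the long-time existence result of Ke, Ge, and Hua recalled at the start of \cref{sec:ExtendedRicciFlow}, every weight $l_{e_j}(t)$ is strictly positive, so each summand on the right-hand side is non-negative. Hence $\frac{d}{dt} H(l(t)) \leq 0$, proving the proposition. The argument is a direct chain rule computation and carries no genuine obstacle; the only subtlety worth flagging is the positivity of $l(t)$, which is exactly what the long-time existence statement supplies.
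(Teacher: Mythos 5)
Your proposal is correct and follows essentially the same argument as the paper, which presents exactly this chain-rule computation (using \eqref{eq:H-PartialDerivative} and the flow equation, with positivity of $l_e(t)$) immediately before the statement of \cref{prop:H-non-ecreasing}. The only extra detail you add — explicitly citing the $C^1$-smoothness from \cref{prop:H-Convex} and the long-time existence from \cref{thm:existence-uniqueness} to justify the differentiation and the positivity of $l(t)$ — is a harmless and accurate elaboration of what the paper leaves implicit.
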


We now present several results regarding the extended Ricci flow, as established in \cite{KeGeHua}.
\begin{theorem}[{\cite[Theorem 1.5]{KeGeHua}}]\label{thm:existence-uniqueness}
Let $(M,\mathcal{T})$ be a closed pseudo $3$-manifold. For any generalized hyper‑ideal metric $l^0 \in \mathbb{R}^E_{>0}$, the extended Ricci flow \eqref{eq:ExtendedRicciFlow} admits a unique solution $l(t)$ defined for all time $t \in [0,\infty)$.
\end{theorem}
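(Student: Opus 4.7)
My plan is to prove the theorem in three stages: local existence via Peano's theorem, long-time extension via an a priori $L^\infty$ bound on the vector field in logarithmic coordinates, and uniqueness via the $C^1$-convexity of the functional $H$.

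\textbf{Local existence.} By \cref{def:ExtendedDihedralAngle} the extended dihedral angles are continuous on $\mathbb{R}^{E(\mathscr{T})}_{\geq 0}$, so the right-hand side $\widetilde{K}(l)\, l$ of \eqref{eq:ExtendedRicciFlow} is continuous on the open set $\mathbb{R}^E_{>0}$. Peano's theorem then yields a solution $l(t)$ defined on some maximal forward interval $[0, T) \subset [0, \infty)$.

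\textbf{Long-time existence.} Introducing logarithmic variables $u_e = \log l_e$, the flow becomes $\dot{u}_e = \widetilde{K}_e(e^u)$. Each extended dihedral angle lies in $[0, \pi]$, so
$$
|\widetilde{K}_e(l)| = \Bigl| 2\pi - \sum_{\hat{e} \in P_E^{-1}(e)} \alpha(\hat{e}) \Bigr| \leq (v(e) + 2)\pi \qquad \forall\, l \in \mathbb{R}^E_{>0}.
$$
Hence $|u_e(t) - u_e(0)| \leq (v(e) + 2)\pi\, t$ on $[0, T)$, so $l(t) = e^{u(t)}$ remains in a compact subset of $\mathbb{R}^E_{>0}$ on every bounded time interval. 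Standard continuation arguments then give $T = \infty$.

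\textbf{Uniqueness.} This is the main obstacle: $\widetilde{K}$ is only H\"older-$1/2$ across the boundary locus $\{l : \phi_{ij}(l) = \pm 1\}$, where the derivative of $\arccos$ is singular, so Picard--Lindel\"of does not apply globally. The plan is to exploit the $C^1$-convexity of $H$ from \cref{prop:H-Convex}, which together with \eqref{eq:H-PartialDerivative} yields the monotonicity
$$
\sum_{e \in E} \bigl(\widetilde{K}_e(l^1) - \widetilde{K}_e(l^2)\bigr)(l_e^1 - l_e^2) \leq 0.
$$
Given two solutions $l^1(t), l^2(t)$ with common initial data $l^0$, the natural candidate Lyapunov is the Bregman-type divergence
$$
\Phi(t) = \sum_{e \in E} \bigl(l_e^1(t) - l_e^2(t)\bigr)\bigl(\log l_e^1(t) - \log l_e^2(t)\bigr) \geq 0.
$$
Differentiating $\Phi$ along the flow decomposes $\dot{\Phi}$ into a manifestly non-positive contribution from monotonicity of $\nabla H$ and a residual cross term arising from the weight $l$ on the right-hand side of \eqref{eq:ExtendedRicciFlow}; the hard part will be bounding this cross term by $C(t)\Phi(t)$ with locally integrable $C(t)$, so that Gronwall's inequality forces $\Phi \equiv 0$. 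Should this direct calculation fail to close, the fallback is to mollify $\widetilde{K}$ to a smooth $\widetilde{K}_\varepsilon$, apply Picard--Lindel\"of to obtain unique regularized solutions $l_\varepsilon(t)$ confined to a common compact set by Step~2, and pass to the limit $\varepsilon \to 0$ using the energy identity $\tfrac{d}{dt} H(l_\varepsilon(t)) = -\sum_e \widetilde{K}_{\varepsilon,e}(l_\varepsilon)^2 l_{\varepsilon,e} \leq 0$ together with the Arzel\`a--Ascoli theorem.
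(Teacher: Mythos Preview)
The paper does not prove this theorem: it is quoted verbatim from \cite{KeGeHua} and invoked as a black box, so there is no in-paper argument to compare against. Your existence steps (Peano locally, plus the uniform bound $|\widetilde K_e|\le (v(e)+2)\pi$ in the $u=\log l$ variables to rule out finite-time blowup) are correct and standard.

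For uniqueness your diagnosis is right: $\widetilde K$ is only H\"older-$1/2$ across $\{\phi_{ij}=\pm 1\}$, so Picard--Lindel\"of fails, and the monotonicity $\sum_e(\widetilde K_e(l^1)-\widetilde K_e(l^2))(l^1_e-l^2_e)\le 0$ coming from convexity of $H$ is the natural replacement. Your Bregman functional $\Phi$ does work, but the bound you are aiming for is slightly off: the residual cross term is not $C(t)\Phi(t)$ but rather $O(\Phi^{5/4})$. Indeed, writing $\log l^1_e-\log l^2_e=(l^1_e-l^2_e)/\xi_e$ and $l^2_e/\xi_e=1+r_e$ with $|r_e|\le |l^1_e-l^2_e|/\min(l^1_e,l^2_e)$, the cross term reduces to $\sum_e(\widetilde K^1_e-\widetilde K^2_e)\,r_e\,(l^1_e-l^2_e)$, which is controlled by $C\|l^1-l^2\|^{1/2}\cdot\|l^1-l^2\|^2\lesssim \Phi^{5/4}$ using the H\"older-$1/2$ bound on $\widetilde K$ and the equivalence $\Phi\asymp\|l^1-l^2\|^2$ on compacta. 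Then $\dot\Phi\le C\Phi+C'\Phi^{5/4}$ with $\Phi(0)=0$ forces $\Phi\equiv 0$ by a Gronwall argument (near $0$ the right-hand side is dominated by $2C\Phi$). You should make this explicit.

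By contrast, your mollification fallback does \emph{not} yield uniqueness and should be dropped. Regularizing to $\widetilde K_\varepsilon$ gives unique solutions $l_\varepsilon$, and Arzel\`a--Ascoli plus the energy identity lets you extract a limit solution, but this cannot exclude the existence of \emph{other} solutions of the limiting non-Lipschitz ODE that are not limits of the $l_\varepsilon$. Compactness arguments produce solutions; they do not prove uniqueness.
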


\begin{theorem}[{\cite[Theorem 1.6]{KeGeHua}}]\label{thm:convergence-equivalence}
Let $(M,\mathcal{T})$ be a closed pseudo $3$-manifold. There exists a zero‑curvature hyper‑ideal metric on $(M,\mathcal{T})$ if and only if the extended Ricci flow \eqref{eq:ExtendedRicciFlow} converges to a hyper‑ideal metric for some initial data $l^0 \in \mathbb{R}^E_{>0}$.
In this case, the extended Ricci flow converges exponentially fast to a zero‑curvature hyper‑ideal metric for any initial data $l^0 \in \mathbb{R}^E_{>0}$.
\end{theorem}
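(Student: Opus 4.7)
The plan is to prove the four assertions of \cref{thm:MainTheorem}---existence of a zero-curvature hyper-ideal metric $l$, its uniqueness in $\mathbb{R}^E_{>0}$, the valence-dependent bounds, and the exponential global convergence of the extended Ricci flow---via a single flow-based argument built from the trapping bounds on $l_e(t)$ supplied by Sections \ref{sec:LowerBounds}--\ref{sec:UpperBounds} and the convex analysis of $H$, without invoking \cref{thm:convergence-equivalence} as a black box.

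The first step is to establish the a priori trapping bounds. Using the geometric analysis of \cref{sec:LowerBounds} and \cref{sec:UpperBounds} together with the hypothesis $v(e)\geq 9$, I would show that for every initial datum $l^0 \in \mathbb{R}^E_{>0}$ there is $T \geq 0$ such that
$$l_e(t) \in \left[\operatorname{arccosh}(1+\mu_{v(e)}),\ \operatorname{arccosh}(b_{v(e)})\right] \quad\text{for all } t \geq T,\ e \in E.$$
The lower bound comes from showing that if $l_e$ drops close to $\operatorname{arccosh}(1+\mu_{v(e)})$, the dihedral angles contributed by each of the $v(e)$ incident tetrahedra are forced to be small enough that the cone angle at $e$ falls below $2\pi$, so $\widetilde{K}_e>0$ and hence $\dot l_e > 0$ under \eqref{eq:ExtendedRicciFlow}; the upper bound is dual. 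Because $\operatorname{arccosh}(b_{v(e)}) \leq \operatorname{arccosh} 2$, the flow is eventually valued in a compact set $\mathcal{K}\subset \mathcal{L}(M,\mathcal{T})$. By \cref{prop:H-non-ecreasing}, $H(l(t))$ is non-increasing, and since it is bounded below on the compact $\mathcal{K}$,
$$\int_T^\infty \sum_{e\in E} \widetilde{K}_e^{\,2}\bigl(l(t)\bigr)\, l_e(t)\, dt \;=\; H(l(T)) - \lim_{t\to\infty} H(l(t)) \;<\; \infty.$$
Since $l_e(t)$ is bounded below by a positive constant, a standard subsequence argument produces $t_n\to\infty$ with $\widetilde{K}(l(t_n))\to 0$; compactness of $\mathcal{K}$ then yields a further subsequence along which $l(t_n)\to l^\ast \in \mathcal{K}\subset \mathcal{L}(M,\mathcal{T})$ with $K(l^\ast)=0$, giving existence, and $l^\ast$ automatically satisfies the stated bounds.

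Uniqueness in $\mathbb{R}^E_{>0}$ follows from convexity: if $l^\ast,l' \in \mathbb{R}^E_{>0}$ are two zero-curvature hyper-ideal metrics, \eqref{eq:H-ordinary-PartialDerivative} makes both critical points of the globally convex functional $H$ of \cref{prop:H-Convex}, hence global minimizers, so $H$ is constant on the segment joining them. Since $l^\ast \in \mathcal{L}(M,\mathcal{T})$, which is open, a subsegment near $l^\ast$ lies in $\mathcal{L}(M,\mathcal{T})$, where \cref{prop:H-Convex} supplies strict convexity of $H$; this forces $l' = l^\ast$. With uniqueness in hand, LaSalle-type reasoning on the compact invariant set $\mathcal{K}$ (its $\omega$-limit set is nonempty, invariant and contained in the critical set, which is the singleton $\{l^\ast\}$) upgrades the subsequential convergence above to the full limit $l(t)\to l^\ast$ as $t\to\infty$, for every initial datum.

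The most substantial step is the exponential rate. Write the vector field of the flow componentwise as $\Phi(l) = \widetilde{K}(l)\cdot l$, so $\dot l = \Phi(l)$. Using \eqref{eq:H-PartialDerivative}, the Jacobian at the fixed point is
$$D\Phi(l^\ast) \;=\; \operatorname{diag}(l^\ast)\, D\widetilde{K}(l^\ast) \;=\; -\operatorname{diag}(l^\ast)\, \operatorname{Hess} H(l^\ast).$$
Because $l^\ast \in \mathcal{L}(M,\mathcal{T})$, \cref{prop:H-Convex} gives $\operatorname{Hess} H(l^\ast)$ symmetric positive definite; conjugating by $\operatorname{diag}(l^\ast)^{1/2}$ shows that $D\Phi(l^\ast)$ is similar to the symmetric negative-definite matrix $-\operatorname{diag}(l^\ast)^{1/2}\operatorname{Hess} H(l^\ast)\operatorname{diag}(l^\ast)^{1/2}$, hence its spectrum is real and lies in $(-\infty,-\lambda]$ for some $\lambda>0$. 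A Lyapunov computation on $\tfrac12\|l(t)-l^\ast\|^2$ (or Hartman--Grobman) then supplies a neighborhood $U$ of $l^\ast$ on which solutions decay to $l^\ast$ at rate $e^{-\lambda t}$; since $l(t)\to l^\ast$ globally, $l(t)\in U$ for all sufficiently large $t$, and exponential convergence for every initial datum follows. The hardest portion of the argument is the sharp valence-$9$ bounds in the first step, where the main geometric input of the paper is concentrated; the existence, uniqueness, and linearization steps are fairly standard once those bounds are in hand.
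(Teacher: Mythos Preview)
There is a basic mismatch between the statement you were asked to prove and what your proposal actually addresses. The statement is \cref{thm:convergence-equivalence}, which the paper does \emph{not} prove: it is quoted verbatim from \cite[Theorem~1.6]{KeGeHua} and used as a black box. Your proposal, by its own first sentence, is instead a proof of \cref{thm:MainTheorem}. These are different theorems. \Cref{thm:convergence-equivalence} is an equivalence valid for an \emph{arbitrary} closed pseudo $3$-manifold $(M,\mathcal{T})$, with no valence hypothesis; \cref{thm:MainTheorem} concerns only the special case $v(e)\ge 9$.

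As a proof of \cref{thm:convergence-equivalence} your argument has a genuine gap: the entire first step---obtaining a priori trapping bounds $l_e(t)\in[\operatorname{arccosh}(1+\mu_{v(e)}),\operatorname{arccosh} b_{v(e)}]$ via Sections~\ref{sec:LowerBounds}--\ref{sec:UpperBounds}---depends essentially on the hypothesis $v(e)\ge 9$, which is absent from \cref{thm:convergence-equivalence}. For a general $(M,\mathcal{T})$ one must instead deduce boundedness of the flow from the \emph{assumed} existence of a zero-curvature hyper-ideal metric $l^\ast$ (e.g.\ via coercivity/properness of $H$ on suitable sublevel sets, as in \cite{KeGeHua}), and you do not supply such an argument. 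You also do not address the easy implication (convergence to a hyper-ideal metric $\Rightarrow$ existence of a zero-curvature one), though that is immediate.

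That said, viewed as a proof of \cref{thm:MainTheorem}, your outline is essentially the paper's own argument (\cref{thm:Final} plus \cref{prop:BoundnessToExistence}), with one difference: where the paper invokes \cref{thm:convergence-equivalence} and Luo--Yang rigidity to conclude uniqueness and exponential convergence, you re-derive those by the convexity of $H$ and a linearization at $l^\ast$. Those last two steps are correct and standard; they are effectively the content of the cited theorem specialized to this situation.
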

Given an interval $I \subset (0,\infty)$, we write
$$
I^{E} \;:=\; \bigl\{ l \in \mathbb{R}^{E}_{>0} \;:\; l(e) \in I \ \ \forall e \in E \bigr\}.
$$
\begin{proposition}\label{prop:BoundnessToExistence}
    Let $(M, \mathcal{T})$ be a closed pseudo $3$-manifold and $\{l(t) \mid t \in [0, \infty)\} \subset \mathbb{R}^E_{>0}$ be the solution of the extended Ricci flow with initial data $l^0\in\mathbb{R}_{>0}^E$. Suppose that there exists constants $c,C>0$ such that 
    \begin{equation}\label{eq:BoundnessOfSolution}
        l(t) \in [c,\,C]^{E}, \quad \forall t\in[0,\infty).
    \end{equation}
    Then there
    exists a generalized hyper‑ideal metric $l\in[c,C]^E$ on $(M,\mathcal{T})$ such that $\widetilde{K}_e(l)=0$ for all edges $e\in E$.
\end{proposition}
\begin{proof}
    Since $\{l(t):t\in[0,\infty)\}$ is contained in a compact subset of $\mathbb{R}_{>0}^E$, it follows from the continuity of $H$ that the set $\{H(l(t)) : t \geq 0\} \subset \mathbb{R}$ is bounded. Since $H(l(t))$ is non-increasing, the limit $\lim\limits_{t\to\infty} H(l(t))$ exists and is finite. It follows that there exists a sequence $\{t_{n}\}_{n\geq 1}$  such that $t_n\to\infty$ as $n\to\infty$ and
    $$
    -\sum_{e\in E} \widetilde{K}_{e}^{\,2}\bigl(l(t_n)\bigr)\,l_e(t_n)= \frac{d}{dt}\Big|_{t=t_n} H\bigl(l(t)\bigr)\to0,\quad n\to\infty.
    $$
    Since each term $\widetilde{K}_e^{\,2}(l(t_n))l_e(t_n)$ is non‑negative,
    \begin{equation}\label{eq:K-tilde-limit}
    \lim_{n\to\infty} \bigl|\widetilde{K}_e\bigl(l(t_n)\bigr)\bigr| = 0, \quad \forall e\in E.
    \end{equation}
    By \eqref{eq:BoundnessOfSolution}, there exists a subsequence $\{t_{n_k}\}_{k\geq1}$ of $\{t_{n}\}_{n\geq1}$ and a vector $l \in [c,C]^{E}$ such that
    \begin{equation*}
            l(t_{n_k}) \to l,\quad k\to\infty.
    \end{equation*}
    It then follows from the continuity of $\widetilde{K}(\cdot)$ and \eqref{eq:K-tilde-limit} that
    $$
    \widetilde{K}(l) = 0.
    $$
    
\end{proof}

\section{Properties of the dihedral angles of a generalized hyper-ideal tetrahedron}\label{sec:PropertiesDihedralAngles}

In this section, we develop some estimates of the dihedral angle of a generalized hyper-ideal tetrahedron, which are essential for our subsequent analysis.

\begin{definition}
    Let $\sigma$ be a generalized hyper-ideal tetrahedron, and let
    $$
    \omega=(e_1^{\sigma},e_2^{\sigma},e_3^{\sigma},e_4^{\sigma},e_5^{\sigma},e_6^{\sigma})
    $$ be an ordering of the six edges in $\sigma$. We say that $\omega$ is an \emph{edge orientation} for $\sigma$ if
    \begin{enumerate}
        \item[(1).] For each $1 \le i \le 6$, $\{e_i^\sigma, e_{i+3}^\sigma\}$ is a pair of opposite edges in $\sigma$, where indices are taken modulo $6$ with values in $\{1,\dots,6\}$.

        \item[(2).] The edges $e_1^{\sigma}, e_2^{\sigma},e_3^{\sigma}$ determine a hexagonal face in $\sigma$. Equivalently, there exists a vertex triangle such that each of the edges $e_4^{\sigma},e_5^{\sigma},e_6^{\sigma}$ has a nonempty intersection with it.
    \end{enumerate}
    For each $1 \le i \le 6$, we denote the length and the dihedral angle of $e_i^\sigma$ by $l_i^\sigma$ and $\alpha_i^\sigma$, respectively. The \emph{cosh-length} of $e_i^\sigma$ is denoted by $x_i^\sigma := \cosh(l_i^\sigma)$.
\end{definition}

Throughout this section, we consider a single generalized hyper-ideal tetrahedron $\sigma$. Let $e$ be an edge in $\sigma$ and $\omega=(e_1^{\sigma},e_2^{\sigma},e_3^{\sigma},e_4^{\sigma},e_5^{\sigma},e_6^{\sigma})$ be an edge orientation of $\sigma$ with $e_1^{\sigma}=e$. For brevity, we shall write 
$$
e_i=e_i^{\sigma},\quad\ l_i=l_i^{\sigma},\quad\ x_i=x_i^{\sigma},\quad\ \alpha_i=\alpha_i^{\sigma},\quad\phi_i=\cos(\alpha_i^{\sigma}).
$$
Set $\alpha:=\alpha_1$ and $\phi:=\phi_1=\cos(\alpha_1)$. By equation \eqref{eq:phiFormula}, we have
\begin{equation}\label{eq:phi-x1-x6}
\phi = \frac{x_2x_3 + x_5x_6 + x_1x_2x_5 + x_1x_3x_6 - x_1^2x_4 + x_4}{\sqrt{2x_1x_2x_6 + x_1^2 + x_2^2 + x_6^2 - 1}\sqrt{2x_1x_3x_5 + x_1^2 + x_3^2 + x_5^2 - 1}}.
\end{equation}
This function $\phi: \mathbb{R}_{\ge 1}^6 \to \mathbb{R}$ is independent of the choice of edge orientation and clearly satisfies the following symmetry property.

\begin{proposition}
For all $x\in\mathbb{R}_{\geq1}^6$,
$$
\phi(x)=\phi(x_1,x_3,x_2,x_4,x_6,x_5)=\phi(x_1,x_5,x_6,x_4,x_2,x_3).
$$
\end{proposition}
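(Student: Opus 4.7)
The plan is to verify both identities by direct substitution into formula \eqref{eq:phi-x1-x6}. Conceptually, the two symmetries reflect the invariance of the dihedral angle $\alpha_1$ at $e_1$ under the symmetries of the combinatorial tetrahedron that fix $e_1$ setwise: the first corresponds to swapping the two vertices of the tetrahedron not lying on $e_1$, while the second corresponds to swapping the two endpoints of $e_1$ itself. In either case the pairing into opposite edges $\{e_2, e_5\}$ and $\{e_3, e_6\}$ is preserved, up to relabeling, so the formula for $\phi$ must be invariant; the proof, however, does not require this geometric interpretation and reduces to a direct check on the polynomial expression.

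For the first identity, I would apply the involution $(x_2, x_3, x_5, x_6) \mapsto (x_3, x_2, x_6, x_5)$ to the right-hand side of \eqref{eq:phi-x1-x6}. In the numerator, the terms $x_2 x_3$ and $x_5 x_6$ are manifestly invariant, the terms $-x_1^2 x_4$ and $x_4$ do not involve $x_2, x_3, x_5, x_6$, and the two cubic monomials $x_1 x_2 x_5$ and $x_1 x_3 x_6$ are exchanged. In the denominator, the two radical factors $2x_1 x_2 x_6 + x_1^2 + x_2^2 + x_6^2 - 1$ and $2 x_1 x_3 x_5 + x_1^2 + x_3^2 + x_5^2 - 1$ are interchanged. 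Hence $\phi$ is unchanged.

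For the second identity, I would apply the involution $(x_2, x_3, x_5, x_6) \mapsto (x_5, x_6, x_2, x_3)$. In the numerator, $x_2 x_3$ and $x_5 x_6$ are swapped with each other, the cubic monomials $x_1 x_2 x_5$ and $x_1 x_3 x_6$ are each separately invariant, and the $x_4$-terms are untouched. In the denominator, the two radical factors are again interchanged. Hence $\phi$ is invariant.

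There is no substantive obstacle: the proposition is purely a symmetry check on the algebraic expression \eqref{eq:phi-x1-x6}, and both identities follow by inspection once each monomial in the numerator and each factor in the denominator is tracked under the two substitutions. The only point worth emphasizing in the write-up is why these particular permutations of the indices respect the structure of the edge orientation, which is the geometric remark in the first paragraph.
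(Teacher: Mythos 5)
Your verification is correct and is exactly the check the paper leaves implicit (the proposition is stated as "clearly" following from \eqref{eq:phi-x1-x6}, with no separate proof): each substitution permutes the monomials of the numerator among themselves and interchanges the two factors under the square roots, so $\phi$ is unchanged. One minor aside: geometrically the second identity corresponds to the double transposition that swaps the endpoints of $e_1$ \emph{and} those of $e_4$ simultaneously (swapping only the endpoints of $e_1$ would yield $\phi(x_1,x_6,x_5,x_4,x_3,x_2)$), but since your argument is purely algebraic and you explicitly do not rely on the geometric interpretation, this does not affect the proof's validity.
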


\begin{lemma}\label{lem:phiLargerThan}
    Fix $\epsilon\in(0,\pi)$. Choose $\delta\in(0,1)$ such that 
    $$
    \frac{-2\delta^2-2\delta+4}{\delta^2+10\delta+4}\geq\cos(\epsilon),
    $$
    where $\epsilon>0$ is fixed.
    Then for any $x\in\mathbb{R}^E_{>1}$ satisfying $\max\limits_{2\leq i\leq6}x_i\leq2$ and $x_1= 1+\delta$, we have 
    $$
    \cos\alpha=\phi\geq\frac{-2\delta^2-2\delta+4}{\delta^2+10\delta+4}\geq\cos(\epsilon).
    $$
\end{lemma}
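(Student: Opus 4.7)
The plan is to prove the first inequality $\phi \geq \frac{-2\delta^2 - 2\delta + 4}{\delta^2 + 10\delta + 4}$ directly from the explicit formula \eqref{eq:phi-x1-x6}; the second inequality then follows immediately from the assumed condition on $\delta$. Since the claimed bound lies in $[0,1]$ for $\delta \in (0,1)$, a bound on $\phi$ itself suffices (the truncation in the definition of the extended dihedral angle is harmless). Write $\phi = N/\sqrt{AB}$, where $N = x_2 x_3 + x_5 x_6 + x_1(x_2 x_5 + x_3 x_6) + x_4(1 - x_1^2)$ and $A, B$ are the two radicands in \eqref{eq:phi-x1-x6}.

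First I would observe that $x_4$ appears only in $N$, with coefficient $1 - x_1^2 = -\delta(2+\delta) < 0$; hence $\phi$ is strictly decreasing in $x_4$, and it suffices to prove the inequality at $x_4 = 2$. Next I would introduce the symmetric variables $p = x_2 + x_6$, $q = x_3 + x_5$, $P = x_2 x_6$, $Q = x_3 x_5$ and $R = x_2 x_5 + x_3 x_6$. The hypotheses give $p, q \in (2, 4]$, $P, Q \in (1, 4]$ and $R > 2$. Using the identities $x_2 x_3 + x_5 x_6 + x_2 x_5 + x_3 x_6 = pq$ and $x_2^2 + x_6^2 = p^2 - 2P$, the formulas simplify to
$$
N = pq + \delta R - 2\delta(2+\delta), \qquad A = p^2 + 2\delta P + \delta(2+\delta),
$$
and symmetrically for $B$. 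Applying the bounds $R \geq 2$ and $P, Q \leq 4$ yields $N \geq pq - C$ and $A \leq p^2 + c$, $B \leq q^2 + c$, where $C := 2\delta + 2\delta^2$ and $c := 10\delta + \delta^2$. Consequently
$$
\phi \;\geq\; f(p,q) \;:=\; \frac{pq - C}{\sqrt{(p^2 + c)(q^2 + c)}}, \qquad (p,q) \in [2,4]^2.
$$

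The decisive step is to show $f(p,q) \geq f(2,2)$ on this rectangle. Logarithmic differentiation gives
$$
\partial_p \log f \;=\; \frac{q}{pq - C} \;-\; \frac{p}{p^2 + c},
$$
whose sign is that of $q(p^2 + c) - p(pq - C) = cq + Cp$, which is strictly positive for $\delta > 0$. (Here $pq - C > 0$ since $pq \geq 4 > 2\delta(1+\delta) = C$ for $\delta < 1$.) Thus $f$ is non-decreasing in $p$, and by symmetry in $q$, so
$$f(p,q) \;\geq\; f(2,2) \;=\; \frac{4 - C}{4 + c} \;=\; \frac{-2\delta^2 - 2\delta + 4}{\delta^2 + 10\delta + 4},$$
as desired.

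The monotonicity step is the heart of the argument, and the rest of the proof is algebraic bookkeeping — once the substitution to $(p,q,P,R)$ is in place, the reduction from a six-variable inequality to a two-variable one, and then to the sign of $cq + Cp$, is essentially automatic; no genuine obstacle is expected.
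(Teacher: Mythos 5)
Your proposal is correct and follows essentially the same route as the paper: bound the $x_4$-term via $x_4\le 2$, use $x_2x_5+x_3x_6\ge 2$ in the numerator and $x_2x_6,\,x_3x_5\le 4$ in the radicands, and reduce to the two-variable expression in $p=x_2+x_6$, $q=x_3+x_5$ minimized at $p=q=2$. The only difference is cosmetic — you establish the final monotonicity by logarithmic differentiation of $f(p,q)$, whereas the paper divides numerator and denominator by $pq$ and bounds each resulting term using $p,q\ge 2$ — and you correctly record the positivity of $pq-C$ needed to enlarge the denominator.
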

\begin{proof}
    Since $x_1 = 1 + \delta$ and $x_4\in[1,2]$, by equation \eqref{eq:phi-x1-x6},
    \begin{equation}\label{eq:a}
        \phi(x) \geq \frac{x_2x_3 + x_5x_6 + (1+\delta)(x_2x_5 + x_3x_6) - 2(2\delta + \delta^2)}{\sqrt{2x_1x_2x_6 + x_1^2 + x_2^2 + x_6^2 - 1}\sqrt{2x_1x_3x_5 + x_1^2 + x_3^2 + x_5^2 - 1}}.
    \end{equation}
    It follows that the numerator in \eqref{eq:a} is always positive since  $\delta\in(0,1)$ and $x_i \in [1, 2]$ for each $2 \leq i \leq 6$. Hence,
    $$
    \begin{aligned}
        \phi(x) \geq& \frac{(x_2 + x_6)(x_3 + x_5) +2\delta- 2\delta(\delta+2)}{\sqrt{(x_2 + x_6)^2 + 8\delta + \delta^2+ 2\delta }\sqrt{(x_3 + x_5)^2 + 8\delta + \delta^2 + 2\delta }}\\
    = &\frac{1 - \frac{2\delta(\delta+1)}{(x_2 + x_6)(x_3 + x_5)}}{\sqrt{1 + \frac{\delta^2+10\delta}{(x_2 + x_6)^2}}\sqrt{1 + \frac{\delta^2+10\delta}{(x_3 + x_5)^2}}}\\
    \geq &\frac{1 - \frac{1}{2}(\delta^2 + \delta)}{1 + \frac{1}{4}(\delta^2+10\delta)}=\frac{-2\delta^2-2\delta+4}{\delta^2+10\delta+4}\geq\cos(\epsilon).
    \end{aligned}
    $$
\end{proof}

\begin{lemma}\label{lem:phiLessThan}
    Suppose that $x\in[1,2]^6$. Then $\dfrac{\partial \phi}{\partial x_j}(x)\geq0$ for each $j\in\{2,3,5,6\}$. In particular, for all $k_j \in [x_j,2]$ with $j \in {2,3,5,6}$, we have
    $$
    \phi(x_1,x_2,x_3,x_4,x_5,x_6)\leq\phi(x_1,k_2,k_3,x_4,k_5,k_6).
    $$
\end{lemma}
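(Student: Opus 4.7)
The plan is to compute $\partial_{x_2}\phi$ directly from the quotient formula, factor the resulting numerator into manifestly signed pieces, and recover the other three variables by symmetry. The main difficulty, I expect, lies in uncovering a clean factorization of the polynomial numerator, which at first sight is a messy combination of monomials in six variables.

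First, I would invoke the two symmetries $\phi(x)=\phi(x_1,x_3,x_2,x_4,x_6,x_5)=\phi(x_1,x_5,x_6,x_4,x_2,x_3)$ recorded in the proposition above. Each is an involution preserving the cube $[1,2]^6$, and differentiating these identities in $x_2$ produces chain-rule relations such as $(\partial_{x_2}\phi)(x)=(\partial_{x_3}\phi)(x_1,x_3,x_2,x_4,x_6,x_5)$. Together they recover the signs of $\partial_{x_3}\phi$, $\partial_{x_5}\phi$, and $\partial_{x_6}\phi$ on $[1,2]^6$ from that of $\partial_{x_2}\phi$ on $[1,2]^6$, so it suffices to prove $\partial_{x_2}\phi\ge 0$ on the cube.

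Write $\phi=N/(D_1 D_2)$ with $N$ the numerator and $D_i^{\,2}$ the two radicands appearing in \eqref{eq:phi-x1-x6}. Since $D_2$ is independent of $x_2$ and both denominators are strictly positive on $\mathbb{R}^6_{\ge 1}$, the quotient rule gives
$$\frac{\partial\phi}{\partial x_2}=\frac{(x_3+x_1 x_5)\,D_1^{\,2}-N(x_2+x_1 x_6)}{D_1^{\,3}\,D_2}.$$
Thus the sign reduces to that of the numerator $\mathcal N := (x_3+x_1 x_5)\,D_1^{\,2}-N(x_2+x_1 x_6)$. Expanding both products monomial by monomial, I expect most terms to cancel in pairs, leaving the factorization
$$\mathcal N=(x_1^2-1)\Bigl[\,x_2(x_4+x_5 x_6)+x_1(x_5+x_4 x_6)-x_3(x_6^2-1)\,\Bigr].$$
Finding and verifying this identity is the crux of the proof; as sanity checks, both sides vanish at $x=(1,\dots,1)$ and at the tight configuration $x=(1,1,2,1,1,2)$, and they can be matched at a generic test point such as $x=(2,1,2,1,1,2)$.

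Once the factorization is in place, positivity on $[1,2]^6$ is immediate. The factor $x_1^2-1$ is non-negative. To sign the bracket I would use $x_1,x_2\ge 1$ to bound it below by $(x_4+x_5 x_6)+(x_5+x_4 x_6)-x_3(x_6^2-1)=(1+x_6)\bigl[x_4+x_5-x_3(x_6-1)\bigr]$, which is non-negative on $[1,2]^6$ since $x_4+x_5\ge 2$ while $x_3(x_6-1)\le 2\cdot 1=2$. The \emph{in particular} clause then follows by applying the four coordinate-wise monotonicities in turn, replacing $x_j$ by $k_j\in[x_j,2]$ for $j\in\{2,3,5,6\}$, with every intermediate tuple remaining inside $[1,2]^6$.
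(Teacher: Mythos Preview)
Your proposal is correct and essentially the same as the paper's proof: both compute $\partial_{x_2}\phi$, pull out the non-negative factor $(x_1^2-1)/(D_1^{3}D_2)$, and then show the remaining bracket $x_2(x_4+x_5x_6)+x_1(x_5+x_4x_6)-x_3(x_6^2-1)$ is non-negative on $[1,2]^6$ (this is exactly the paper's expression $(x_1x_4+x_2x_5-x_3x_6)x_6+x_3+x_1x_5+x_2x_4$, rearranged). Your explicit reduction to $j=2$ via the two symmetries is a mild streamlining over the paper, which instead records the four derivative formulas and handles them in parallel.
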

\begin{proof}
    Computing the partial derivatives of $\phi$ yields
    
    \begin{equation}\label{eq:PhiDerivative}
    \begin{cases}
    \dfrac{\partial \phi}{\partial x_2} = A_0[(x_1x_4 + x_2x_5 - x_3x_6)x_6 + x_3 + x_1x_5 + x_2x_4], \\
    \dfrac{\partial \phi}{\partial x_5} = A_1[(x_1x_4 + x_2x_5 - x_3x_6)x_3 + x_6 + x_1x_2 + x_5x_4], \\
    \dfrac{\partial \phi}{\partial x_3} = A_1[(x_1x_4 - x_2x_5 + x_3x_6)x_5 + x_2 + x_1x_6 + x_3x_4], \\
    \dfrac{\partial \phi}{\partial x_6} = A_0[(x_1x_4 - x_2x_5 + x_3x_6)x_2 + x_5 + x_1x_3 + x_6x_4],
    \end{cases}
    \end{equation}
    where
    \begin{align*}
    A_0 &= (x_1^2 - 1)(2x_1x_2x_6 + x_1^2 + x_2^2 + x_6^2 - 1)^{-3/2}(2x_1x_3x_5 + x_1^2 + x_3^2 + x_5^2 - 1)^{-1/2} \geq 0, \\
    A_1 &= (x_1^2 - 1)(2x_1x_2x_6 + x_1^2 + x_2^2 + x_6^2 - 1)^{-1/2}(2x_1x_3x_5 + x_1^2 + x_3^2 + x_5^2 - 1)^{-3/2} \geq 0.
    \end{align*}
    For $j=2$, it follows from \eqref{eq:PhiDerivative} that
    $$
    \begin{aligned}
        \dfrac{\partial \phi}{\partial x_2} \geq& A_0[(2 - x_3x_6)x_6 + x_3 + 2]\\=&A_0[x_3(1-x_6^2)+2x_6+2]\\
        \geq &A_0[2(1-x_6^2)+2x_6+2]=2A_0(-x_6^2+x_6+2)\geq0.
    \end{aligned}
    $$
    The other three cases where $j=3,5,6$ follow similarly.
\end{proof}

Note that the function 
$$
\phi(x,2,2,1,2,2)=\frac{-x^2+8x+9}{x^2+8x+7}=\frac{-x+9}{x+7}
$$
is strictly decreasing for $x\in(1,+\infty)$. For each integer $n \ge 10$, let $b_n > 1$ be the unique solution of the equation 
$$
\phi(x, 2, 2, 1,2, 2) = \cos\left( \frac{2\pi}{n} \right)
$$ 
and set $b_9=2$. It follows that 
\begin{equation}\label{eq:bnExpression}
    b_n=\frac{16}{1+\cos\left( \frac{2\pi}{n} \right)}-7<2,\quad\forall n\geq 10.
\end{equation}

\begin{proposition}\label{prop:PrioriUpperBound}
    Let $(M, \mathcal{T})$ be a closed pseudo $3$-manifold such that $v(e)\geq 9$ for each edge $e\in E$. Suppose that $\{l(t) : t \in [0, \infty)\} \subset \mathbb{R}^E_{>0}$ is the solution of the extended Ricci flow with initial data $l^0\in \mathbb{R}^E_{>0}$ satisfying 
    $$
    l^0_e\in(0,\operatorname{arccosh}b_{v(e)}),\quad \forall e\in E.
    $$
    If we further assume that there exists $t_0>0$ such that
    $$
    \sup\{l_e(t):0\leq t\leq
    t_0, e\in E\}\leq \operatorname{arccosh} 2,
    $$
    then for each $e\in E$,
    $$
    l_{e}(t) \in (0, \operatorname{arccosh} b_{v(e)}],\quad \forall t\in[0,t_0].
    $$
\end{proposition}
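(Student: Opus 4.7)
The plan is a barrier argument: show that the bounds $l_e \leq \operatorname{arccosh} b_{v(e)}$ cannot first be violated during $[0,t_0]$. Set
$$
t^{*} := \inf\bigl\{\, t \in [0, t_0] : l_{e'}(t) > \operatorname{arccosh} b_{v(e')} \text{ for some } e' \in E \,\bigr\},
$$
and suppose for contradiction that $t^{*} \leq t_0$. The strict initial inequality and continuity of $l(\cdot)$ give $t^{*} > 0$. Since $E$ is finite, a pigeonholing argument produces an edge $e^{*}$ and a sequence $t_n \downarrow t^{*}$ with $l_{e^{*}}(t_n) > \operatorname{arccosh} b_{v(e^{*})}$, while $l_e(t^{*}) \leq \operatorname{arccosh} b_{v(e)}$ for every $e \in E$; in particular $l_{e^{*}}(t^{*}) = \operatorname{arccosh} b_{v(e^{*})}$. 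Taking the forward difference quotient of $l_{e^{*}}$ at $t^{*}$ along $\{t_n\}$ yields $\dot l_{e^{*}}(t^{*}) \geq 0$, and the flow equation \eqref{eq:ExtendedRicciFlow} together with $l_{e^*}(t^*) > 0$ then forces $\widetilde{K}_{e^{*}}(l(t^{*})) \geq 0$.

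The heart of the proof is to derive the opposite strict inequality from the structure of the incident tetrahedra. Fix a lift $\hat{e} \in P_E^{-1}(e^{*})$, lying in some tetrahedron $\hat{\sigma} \in \mathscr{T}$, and choose an edge orientation of $\hat{\sigma}$ with $e_1 = \hat{e}$. Since $b_n \leq 2$ for $n \geq 9$ and $l_e(t^{*}) \in (0, \operatorname{arccosh} 2]$ for all $e$, we have $x_1 = b_{v(e^{*})} \in (1, 2]$ and $x_i \in (1, 2]$ for $i = 2,3,4,5,6$. Lemma \ref{lem:phiLessThan} applied with $k_j = 2$ gives $\phi(x) \leq \phi(x_1, 2, 2, x_4, 2, 2)$. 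A direct inspection of \eqref{eq:phi-x1-x6} shows that the numerator depends on $x_4$ only through the term $(1 - x_1^2) x_4$, while the denominator is independent of $x_4$; since $x_1 > 1$, $\phi$ is strictly decreasing in $x_4$, so
$$
\phi(x_1, 2, 2, x_4, 2, 2) < \phi(x_1, 2, 2, 1, 2, 2) = \cos\!\left( \tfrac{2\pi}{v(e^{*})} \right),
$$
the last equality being the definition \eqref{eq:bnExpression} of $b_{v(e^{*})}$. Hence $\alpha(\hat{e}) > 2\pi / v(e^{*})$ strictly. Summing this strict bound over the $v(e^{*})$ lifts of $e^{*}$ produces a cone angle strictly greater than $2\pi$, so $\widetilde{K}_{e^{*}}(l(t^{*})) < 0$, which contradicts the inequality obtained in the first paragraph.

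I expect the main obstacle to be precisely the promotion from $\widetilde{K}_{e^{*}} \leq 0$, which already follows from Lemma \ref{lem:phiLessThan} together with the non-strict bound $\phi(x_1,2,2,x_4,2,2) \leq \phi(x_1,2,2,1,2,2)$, to the strict bound $\widetilde{K}_{e^{*}} < 0$ needed to eliminate the critical case $\dot l_{e^{*}}(t^{*}) = 0$. The key observation is that the extended Ricci flow keeps every edge length strictly positive by Theorem \ref{thm:existence-uniqueness}, so $x_4 > 1$ in every incident tetrahedron, which activates the strict monotonicity of $\phi$ in its opposite-edge variable and delivers the required strictness.
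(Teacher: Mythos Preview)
Your barrier argument is essentially the same as the paper's, and your setup at the first crossing time $t^*$ is actually cleaner than the paper's somewhat informal ``we may assume'' step. There is one small oversight: the final equality $\phi(b_{v(e^*)},2,2,1,2,2)=\cos\bigl(2\pi/v(e^*)\bigr)$ that you justify via \eqref{eq:bnExpression} holds by definition only for $v(e^*)\geq 10$; when $v(e^*)=9$ one has $b_9=2$ by convention and $\phi(2,2,2,1,2,2)=7/9>\cos(2\pi/9)$, so the chain of inequalities breaks. This case, however, cannot occur: a violating edge $e^*$ of valence $9$ would force $l_{e^*}(t_n)>\operatorname{arccosh} b_9=\operatorname{arccosh}2$ for some $t_n\in(t^*,t_0]$, contradicting the standing hypothesis. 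You should insert this observation before invoking \eqref{eq:bnExpression}.

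A small technical difference from the paper: you extract strictness from $x_4>1$ via the monotonicity of $\phi$ in its opposite-edge variable, whereas the paper works at a level $b_0>b_{v(e_1)}$ strictly above the barrier and obtains strictness from the first variable via the strict decrease of $x\mapsto\phi(x,2,2,1,2,2)$. Both devices work; yours has the advantage of letting you argue directly at the touching time.
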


\begin{proof}
     Suppose, for contradiction, that there exist $t_1\in[0,t_0]$ and an edge $e_1\in E$ such that $l_{e_1}(t_1)>\operatorname{arccosh} b_{v(e_1)}$. Define $b_0:=\cosh(l_{e_1}(t_1))>b_{v(e_1)}$. We may assume that 
    $$
    t_1=\inf\{t\in[0,t_0]:\exists
    e\in E,l_{e}(t)=\operatorname{arccosh}(b_0)\}.
    $$
    Since $l^0_e\in(0,\operatorname{arccosh}b_{v(e)})$ for all $e\in E$, we have $t_0>0$.
     It follows from \cref{lem:phiLessThan} that at time $t_1$,
    $$
    \begin{aligned}
        \cos(\alpha(\hat{e}))\leq&\phi(\cosh(l_{e_1}(t_1)),2,2,1,2,2)\\
        =&\phi(b_0,2,2,1,2,2)<\phi(b_{v(e_1)},2,2,1,2,2)=\cos\left(\frac{2\pi
    }{v(e_1)}\right)
    \end{aligned}
    $$
    for each $\hat{e}\in P^{-1}_E(e_1)$.
    Therefore, 
    $$
    \widetilde{K}_{e_1}(l(t_1))=2\pi-\sum_{\hat{e}\in P^{-1}_E(e_1)}\alpha(\hat{e})<2\pi-v(e_1)\cdot\frac{2\pi}{v(e_1)}=0.
    $$
    Consequently, we have $0\leq l'_{e_1}(t_1)=\widetilde{K}_{e_1}(l(t_1))l_{e_1}(t_1)<0$, which is a contradiction.
\end{proof}

\begin{lemma}\label{lem:10StrictlyDecreasingFct}
    Fix $a,c,d\in[1,2]$. Define the following functions:
    $$
    \begin{array}{ll}
        j_1(x)=\phi(x,a,d,c,2,2),\quad &j_2(x)=\phi(x,a,2,c,2,d),\\
        j_3(x)=\phi(x,a,x,c,2,d),\quad &j_4(x)=\phi(x,a,d,c,2,x),\\
        j_5(x)=\phi(x,a,x,x,2,2),\quad &j_6(x)=\phi(x,a,2,x,2,x),\\
        j_7(x)=\phi(x,x,x,c,2,2),\quad &j_8(x)=\phi(x,x,2,c,2,x),\\
        j_9(x)=\phi(x,x,x,x,2,2),\quad &j_{10}(x)=\phi(x,x,2,x,2,x).
    \end{array}
    $$
    Then, all these functions $j_k(x)$, $1\leq k\leq10$, are strictly decreasing with respect to $x\in[1,2]$.
    \end{lemma}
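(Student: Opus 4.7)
The plan is a case-by-case verification of monotonicity. First I would write $\phi = N/\sqrt{P Q}$ from \eqref{eq:phi-x1-x6}, where $N(x_1,\dots,x_6)$ is the numerator and $P(x_1,x_2,x_6)$, $Q(x_1,x_3,x_5)$ are the two polynomials under the radicals (both strictly positive on $[1,2]^6$). Substituting the prescribed arguments of each $j_k$ yields $j_k(x) = N_k(x)/\sqrt{P_k(x) Q_k(x)}$, whose numerator $N_k(x)$ is easily seen to be strictly positive on $[1,2]$ for parameters $a, c, d \in [1,2]$. Taking logarithmic derivatives then reduces the claim $j_k'(x) < 0$ to the polynomial inequality
$$F_k(x) := 2 N_k'(x)\,P_k(x)\,Q_k(x) - N_k(x)\bigl[P_k'(x)\,Q_k(x) + P_k(x)\,Q_k'(x)\bigr] < 0 \quad\text{for all } x\in[1,2].$$

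For the two parameter-free cases $j_9$ and $j_{10}$, I would substitute and simplify explicitly. One finds $j_9(x) = (-x^3 + 5x^2 + x + 4)/(6x^2 + 3)$ and, after cancellation of common factors $(x+1)^{3/2}$, $j_{10}(x) = x(5-x)/\sqrt{(x+1)(2x-1)(x+7)}$. Each is strictly decreasing on $[1,2]$ by direct sign analysis of its derivative; for instance, the numerator of $j_9'(x)$ is a negative constant times $2x^4 + 5x^2 + 6x - 1$, which is positive on $[1,2]$ because it is monotone increasing on $[0,\infty)$ and has value $12$ at $x=1$.

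For the eight parameter-dependent cases, my plan is to show that $F_k(x;a,c,d)$ depends monotonically on each of its free parameters throughout $[1,2]$. If each of $\partial F_k/\partial a$, $\partial F_k/\partial c$, $\partial F_k/\partial d$ can be signed definitively on the relevant domain, then verifying $F_k < 0$ reduces to checking it at the corners of the parameter cube, each corner being a single-variable polynomial inequality on $[1,2]$ of the same flavour as the $j_9, j_{10}$ cases and amenable to the same sign analysis. The symmetry $\phi(x_1,x_2,x_3,x_4,x_5,x_6) = \phi(x_1,x_3,x_2,x_4,x_6,x_5)$ can then be exploited to pair some of the ten functions together and avoid duplicated effort.

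The main obstacle will be the algebraic bookkeeping: the $F_k$ have degree up to roughly seven in $x$ and up to two in each parameter, so demonstrating the fixed-sign property of each parameter derivative involves careful polynomial manipulation. In any case where a parameter derivative does not maintain one sign across $[1,2]$, I would split that interval at the root of the derivative and argue piecewise, or else specialize the parameter to whichever endpoint in $\{1,2\}$ represents the worst case for the inequality and verify the resulting univariate polynomial inequality directly.
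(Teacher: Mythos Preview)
Your reduction is exactly the paper's: both compute $j_k'(x)$ and write it as a polynomial $D_k(x;a,c,d)$ over a positive denominator, so the task is the multivariate polynomial inequality $D_k<0$ on $[1,2]$ (your $F_k$ equals the paper's $D_k$ up to a positive factor). Your explicit formulas for $j_9$ and $j_{10}$ also agree with the paper's, and the parameter-free cases $j_7,\dots,j_{10}$ are handled identically.

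The one genuine difference is in how the parameter-dependent inequalities are disposed of. You propose to show that $F_k$ is monotone in each parameter and then check the corners of $[1,2]^3$; the paper instead bounds each monomial of $D_k$ by its worst case over $a,c,d\in[1,2]$ in a single pass, obtaining a crude univariate upper bound such as $D_1(x)\le 160-166x$ or $D_3(x)\le 24-x-18x^2-12x^3$, which is then trivially negative on $[1,2]$. This is considerably less work than your plan: there is no need to sign the parameter derivatives (which are themselves multivariate polynomials), no splitting into subintervals, and no corner enumeration. Your approach is valid and would go through, but the paper's monomial-by-monomial bounding is the more economical route here, and you may wish to try it first before committing to the corner-reduction machinery.
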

    
    \begin{proof}
        The derivatives of the functions $j_k(x)$, $1\leq k\leq6$, are as follows:
        $$
        \begin{array}{ll}
            j_1'(x)=\frac{D_1(x)}{2 (3+a^2+4 a x+x^2)^{3/2} (3+d^2+4 d x+x^2)^{3/2}}, &  j_2'(x)=\frac{D_2(x)}{(7+x) \sqrt{7+8 x+x^2} (-1+a^2+d^2+2 a d x+x^2)^{3/2}},\\
            j_3'(x)=\frac{D_4(x)}{\sqrt{3} (-1+a^2+d^2+2 a d x+x^2)^{3/2} (1+2 x^2)^{3/2}}, &j_4'(x)=\frac{D_3(x)}{(1+a)^{1/2}(3+d^2+4 d x+x^2)^{3/2} ( -1+a+2 x^2)^{3/2}} ,\\
            j_5'(x)=\frac{D_5(x)}{\sqrt{3}\left(3+a^2+4ax+x^2\right)^{3/2}\left(1+2x^2\right)^{3/2}},& j_6'(x)=\frac{D_6(x)}{(7+x)\sqrt{(1+a)(7+8x+x^2)}\big(-1+a+2x^2\big)^{3/2}},
        \end{array}
        $$
        where
        $$
        \begin{aligned}
        D_1(x) =& 4(a+d)(3a^2+3d^2 -7ad- acd - 3(1+c)) \\
            &- 2\Big(ad(46+16c-3d^2) +c(24+7d^2) +a^2(7c+8d^2+2cd^2-8)-3a^3d - 8(d^2-3) \Big)x \\
            &- 12(1+c)(4d + a^2d + a(4+d^2))x^2 \\
            &- 2\big(a^2(4+c) + 2a(5+8c)d + 4(2+d^2) + c(8+d^2)\big)x^3 \\
            &- 4(1+c)(a+d)x^4,\\
        D_2(x) =& 6a^3 - 4a^2(c+2d) - 2(2c-3d)(-1+d^2) - a(6+7cd+8d^2) \\
            &- \big(14d + a^2(11c+4d) + c(11d^2-4) + a(14+9cd+4d^2)\big)x \\
            &- \big(10d + 15acd + a^2(c+2d) + c(4+d^2) + 2a(5+d^2)\big)x^2 \\
            &- \big(2(2c+d) + a(2+cd)\big)x^3,\\
        \end{aligned}
        $$
        \begin{equation*}
            \begin{aligned}
            D_3(x) =& a(3a^2+d^2- cd-3) - \big(c(4a^2+4d^2-3) + d(2d^2-a^2)\big)x \\
            &- 9ad(c+d)x^2 - \Big(d(9+4a^2-2d^2) +c(3+2a^2+2d^2)\Big)x^3 \\
            &- 2a(3+cd-d^2)x^4,\\
            D_4(x) =& 2(-1+a)(3+3a - cd + d^2)- \Big( ac(7+2d^2) + d(2(5+d^2)-a-3a^2)-c\Big)x \\
            &- 6(1+a)d(c+d)x^2- \big(d(3+11a-2d^2) + c(9+a+2d^2)\big)x^3 \\
            &- 4(1+a+cd-d^2)x^4,\\
        \end{aligned}
        \end{equation*}
        \begin{equation*}
            \begin{aligned}
            D_5(x)=&3+a+a^2+3 a^3-2 (8-a-a^2) x-3 (3+12 a+a^2) x^2-2 (1+7 a+4 a^2) x^3\\
            &-2 (8-a+2 a^2) x^4-12 a x^5-2 x^6,\\
            D_6(x)=&-21+15 a+6 a^2-(19+9 a) x-(36 a-16) x^2-(6 a-6) x^3-22 x^4-2 x^5.\\
        \end{aligned}
        \end{equation*}
        Since $a,c,d,x\in[1,2]$, we obtain that
        $$
        \begin{array}{ll}
            D_1(x)\leq160-166x<0,&  D_2(x)\leq 33-63x<0,\\
            D_3(x)\leq 24-x-18x^2-12x^3<0,\qquad &D_4(x)\leq24-3x-24x^2-18x^3<0,\\
            D_5(x)\leq 33-2x-48x^2<0,& D_6(x)\leq 33-28x-20x^2-22x^4<0.
        \end{array}
        $$

        For the remaining four functions, a direct computation yields
        \begin{equation*}
            \begin{aligned}
            j_7'(x)&= -\frac{2(1+c)x}{(1+2x^2)^2}<0,\\
            j_8'(x)&= -\frac{(1+x)^2\big(4(7+x^3-3x)+c(18x+3x^2-4-x^3)\big)}{(7+x)\left((1+x)^2(-1+2x)\right)^{3/2}\sqrt{7+8x+x^2}}<0,\\
            j_9'(x)&= -\frac{-1+6x+5x^2+2x^4}{3(1+2x^2)^2}<0,\\
            j_{10}'(x)&=- \frac{(1+x)^2\big(35+9x^2+20x^3+x^4-29x\big)}{(7+x)\big((1+x)^2(-1+2x)\big)^{3/2}\sqrt{7+8x+x^2}}<0.
            \end{aligned}
        \end{equation*}
        
    \end{proof}

    Finally, we recall a result from \cite{KeGeHua}, which provides a sufficient condition for a generalized hyper-ideal tetrahedron to be hyper-ideal.
    
    \begin{theorem}[{\cite[Theorem 3.9]{KeGeHua}}]\label{thm:phi-1and1}
    Let $\sigma$ be a generalized hyper‑ideal tetrahedron with edge‑length vector $l = (l_1,\dots,l_6) \in \mathbb{R}^6_{>0}$. If
    $$
    l_i \le \operatorname{arccosh} 3,\quad \forall 1 \le i \le 6,
    $$
    then
    $$
    \phi_i(l) \in (-1,1), \quad \forall 1 \le i \le 6.
    $$
    In particular, $\sigma$ is a hyper‑ideal tetrahedron, that is, $l \in \mathcal{L}$.
    \end{theorem}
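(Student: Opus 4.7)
The plan is to establish that $\phi_i(l) \in (-1, 1)$ for every edge $1 \le i \le 6$. By the symmetry of the formula \eqref{eq:phiFormula} under edge relabeling, it suffices to treat $\phi := \phi_1$. Set $x_i := \cosh l_i \in (1, 3]$ and write $\phi = N/D$ with $D = D_1 D_2 > 0$, as in \eqref{eq:phi-x1-x6}. The goal is the polynomial inequality $F := D^2 - N^2 > 0$ on the open region $(1, 3]^6$.

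The key observation is that $N$ is affine in $x_4$ with slope $-(x_1^2 - 1) < 0$, which makes $F$ a downward-opening quadratic in $x_4$. A direct polynomial expansion yields the factored identity
\begin{equation*}
D^2 - p^2 = (x_1^2 - 1)\, R(x_1),
\end{equation*}
where $p := x_1(x_2 x_5 + x_3 x_6) + (x_2 x_3 + x_5 x_6)$ (so that $N = p - (x_1^2 - 1) x_4$) and $R(x_1) := x_1^2 + 2(x_2 x_6 + x_3 x_5) x_1 + (x_2^2 + x_3^2 + x_5^2 + x_6^2) - (x_2 x_5 - x_3 x_6)^2 - 1$. Combining this identity with the expression for $N$ gives
\begin{equation*}
F|_{x_4 = c} = (x_1^2 - 1)\,\bigl[ R(x_1) + 2 c p - c^2 (x_1^2 - 1) \bigr], \qquad c \ge 0.
\end{equation*}
Since $x_1 > 1$ implies $x_1^2 - 1 > 0$, it suffices to prove the bracketed expression is strictly positive at the endpoints $c = 1$ and $c = 3$; the downward-opening parabola in $x_4$ then yields positivity on all of $[1, 3]$.

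Set $X := x_2 + x_3$ and $Y := x_5 + x_6$. A short calculation using $(x_2 + x_3)(x_5 + x_6) = x_2 x_5 + x_2 x_6 + x_3 x_5 + x_3 x_6$ and $X^2 + Y^2 = (x_2^2 + x_3^2 + x_5^2 + x_6^2) + 2(x_2 x_3 + x_5 x_6)$ rewrites the $c = 1$ case as $R(x_1) + 2p - (x_1^2 - 1) = 2 X Y x_1 + (X^2 + Y^2) - (x_2 x_5 - x_3 x_6)^2$. Using $x_1 \ge 1$ together with $(X + Y)^2 \ge (x_2 x_5 - x_3 x_6)^2$ on $[1, 3]^4$ — a consequence of the pointwise estimate $x_i x_j \le 3 x_i + x_j - 3$ (equivalent to $(x_i - 1)(x_j - 3) \le 0$) — the bracketed quantity is bounded below by $2 X Y (x_1 - 1) + (X + Y)^2 - (x_2 x_5 - x_3 x_6)^2 \ge 0$, and is strictly positive as soon as $x_1 > 1$. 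For $c = 3$, the bracketed polynomial $f(x_1) := R(x_1) + 6 p - 9(x_1^2 - 1)$ is a downward-opening quadratic in $x_1$ with leading coefficient $-8$, so its minimum on $[1, 3]$ is attained at $x_1 = 1$ or $x_1 = 3$. At $x_1 = 1$, the $c = 1$ inequality gives $R(1) \ge -2 p(1)$, hence $f(1) = R(1) + 6 p(1) \ge 4 p(1) = 4 (x_2 + x_6)(x_3 + x_5) \ge 16 > 0$. At $x_1 = 3$, the polynomial $f(3)$ vanishes to leading order at the corner $(x_2, x_3, x_5, x_6) = (1, 1, 1, 1)$; substituting $y_j := x_j - 1 \ge 0$, a direct expansion exhibits linear part $32(y_2 + y_3 + y_5 + y_6)$ and non-negative quadratic part, and a careful regrouping of the cubic and quartic contributions — using the pointwise bounds $y_j \le 2$ — shows these are dominated by the linear and quadratic terms, so $f(3) > 0$ whenever some $y_j > 0$.

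The main obstacle is precisely this last step. Both endpoint inequalities are tight at boundary configurations: $F = 0$ at $x_1 = 1$ (where $\phi = 1$) and at $(x_1, x_2, x_3, x_5, x_6) = (3, 1, 1, 1, 1)$ (where $\phi = -1$). The algebraic manipulations must therefore carefully preserve the factors $x_1 - 1$ and $x_j - 1$ so that the strict hypothesis $l_i > 0$ translates into strict positivity of the required polynomial expressions; this is where the bulk of the book-keeping lies.
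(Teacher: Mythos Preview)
The paper does not provide its own proof of this theorem; it is quoted verbatim as a citation of \cite[Theorem~3.9]{KeGeHua} and used as a black box. There is therefore no in-paper argument to compare against.

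Your strategy is sound in outline and the $c=1$ endpoint is correctly handled: the rewriting of the bracketed expression as $2XYx_1+X^2+Y^2-(x_2x_5-x_3x_6)^2$ is right, and the bound $(X+Y)^2\ge(x_2x_5-x_3x_6)^2$ follows from the identity $(x_3+1)(x_6+1)\ge(x_2-1)(x_5-1)$ on $[1,3]^4$, so strict positivity for $x_1>1$ is clean. The genuine gap, which you acknowledge, is the $c=3$, $x_1=3$ endpoint: you assert that after the substitution $y_j=x_j-1$ the quartic in $(y_2,y_3,y_5,y_6)$ has the right sign by ``a careful regrouping,'' but you do not carry this out. Since $f(3)$ vanishes at $y=0$ and the quartic term $-(x_2x_5-x_3x_6)^2$ is not sign-definite on its own, the claimed domination of the cubic and quartic parts by the linear and quadratic parts is precisely the substantive computation, and as written it is only a promise. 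If you want a self-contained proof you must either exhibit that regrouping explicitly or appeal to the original argument in \cite{KeGeHua}.
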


\section{Lower bounds of length}\label{sec:LowerBounds}
In this section, we derive a lower bound for solutions of the extended Ricci flow \eqref{eq:ExtendedRicciFlow} under the assumption of a uniform upper bound. We first introduce several auxiliary functions. Define
$$
f_{\xi}(\delta)=\frac{1 + \dfrac{2\delta(1+\xi)^2 - 2\delta(\delta+2)}{4(1+\xi)^2} }{{1+ \dfrac{\delta^2+10\delta}{4(1+\xi)^2}}}=\frac{-2\delta^2+2\delta((1+\xi)^2-2)+4(1+\xi)^2}{\delta^2+10\delta+4(1+\xi)^2},
$$
where $\xi\geq0$ is fixed. 
When $\xi\in[0,0.13]$, it is clear that $f_{\xi}(\delta)$ is strictly decreasing with respect to $\delta\in[0,0.13]$ since
\begin{equation}\label{eq:DecreasingOfFXi}
    \begin{aligned}
        f_{\xi}(\delta)&=\frac{-2\delta^2+2\delta((1+\xi)^2-2)+4(1+\xi)^2}{\delta^2+10\delta+4(1+\xi)^2}\\
        &=\frac{(1+\xi)^2-2}{5}+\frac{\big(28-4(1+\xi)^2\big)(1+\xi)^2-(8+(1+\xi)^2)\delta^2}{5\big(\delta^2+10\delta+4(1+\xi)^2\big)}.
    \end{aligned}
\end{equation}
For each $y\in(0,1)$ and $\xi\in[0,\infty)$, let $\eta_{\xi}(y)$ be the unique positive solution of the equation $f_{\xi}(\delta)=y$, or equivalently, the unique positive root of the quadratic equation
$$
h_{\xi,y}(\delta):=(2+y)\delta^2+2\Big(2+5y-(1+\xi)^2\Big)\delta-4(1-y)(1+\xi)^2.
$$
Then, $\eta_{\xi}(y)$ can be explicitly written as
$$
\eta_{\xi}(y) = \frac{- 2 - 5y+(1+\xi)^2 +{\sqrt{\Big(2+5y - (1+\xi)^2 \Big)^2 + 4(2+y)(1-y)(1+\xi)^2}}}{2+y}.
$$

Next, let $b(y):=\frac{16}{y+1}-7$ and
$$
\beta(y):=\left\{
\begin{array}{ll}
    b(y), & y\in\left[\cos\left(\frac{2\pi}{10}\right),1\right],\\
    \frac{\left[2-b\left(  \cos\left(\frac{2\pi}{10}\right) \right)\right]b(y)-\left[2-b\left(\cos\left(\frac{2\pi}{9}\right)\right)\right]b\left(  \cos\left(\frac{2\pi}{10}\right) \right)}{b\left(\cos\left(\frac{2\pi}{9}\right)\right)-b\left(  \cos\left(\frac{2\pi}{10}\right) \right)},&y\in\left[\cos\left(\frac{2\pi}{9}\right), \cos\left(\frac{2\pi}{10}\right)\right].
\end{array}
\right.
$$
We further define
\begin{equation}\label{eq:G-def}
    \begin{aligned}
    G(\delta,\eta_2,\eta_3,\eta_5,\eta_6,\beta_2,\beta_3,\beta_5,\beta_6)=\frac{1 + \dfrac{\delta[(1+\eta_{2})(1+\eta_{5})+(1+\eta_{3})(1+\eta_{6})] - 2\delta(\delta + 2)}{(2+\eta_{2}+\eta_{6})(2+\eta_{3}+\eta_{5})} }{\sqrt{1+ \dfrac{\delta^2+2(1+\beta_{2}\beta_{6})\delta}{(2+\eta_{2}+\eta_{6})^2}}\sqrt{1+ \dfrac{\delta^2+2(1+\beta_{3}\beta_{5})\delta}{(2+\eta_{3}+\eta_{5})^2}}}.
    \end{aligned}
\end{equation}
and
\begin{equation}\label{eq:psi-def}
    \psi_{\xi}(\delta,y_2,y_3,y_5,y_6)=G\left(\delta,\eta_2(y_2),\eta_3(y_3),\eta_5(y_5),\eta_6(y_6),\beta_2(y_2),\beta_3(y_3),\beta_5(y_5),\beta_6(y_6)\right),
\end{equation}
where
\begin{equation}\label{eq:DefOfEtaBeta}
    \eta_i(y_i)=\eta_{\xi}(y_i),\quad \beta_i(y_i)=\beta(y_i),
    \quad i\in\{2,3,5,6\}.
\end{equation}
For brevity, we shall write $\eta_i$ and $\beta_i$ instead of $\eta_i(y_i)$ and $\beta_i(y_i)$ whenever no ambiguity arises.

\begin{lemma}\label{lem:EtaAndBeta}
    Assume that for each $i\in\{2,3,5,6\}$, $y_i=\cos(\frac{2\pi}{n_i})$, where $n_i\in\mathbb{N}\cap[9,\infty)$.
    \begin{enumerate}
        \item[\rm(1).] Fix $\xi\in[0,0.13]$. Then both $\eta_{\xi}(y)$ and $\beta(y)$ are strictly decreasing with respect to $y$ in $[\cos(\frac{2\pi}{9}),1)$. It follows that $\eta_i\in[0,0.13]$  and $\beta_i\in[1,2]$ for all $i\in\{2,3,5,6\}$.
        \item[\rm(2).] Fix $y\in(0,1)$. Then $\eta_{\xi}(y)$ is strictly increasing with respect to $\xi$ in $[0,\infty)$.
    \end{enumerate}
\end{lemma}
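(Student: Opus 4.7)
The plan is to analyze $\eta_\xi(y)$ via the implicit function theorem applied to its defining quadratic $h_{\xi, y}(\delta) = 0$, and to treat $\beta(y)$ by handling its two pieces separately. I would prove Part (2) first, since it only needs the implicit function theorem and its conclusion is used to bound $\eta_i$ in Part (1).

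For Part (2), differentiate the identity $h_{\xi, y}(\eta_\xi(y)) = 0$ in $\xi$. A direct computation gives $\partial_\xi h_{\xi, y} = -4(1+\xi)\bigl(\delta + 2(1-y)\bigr) < 0$ for $\delta > 0$, $y < 1$. To sign $\partial_\delta h_{\xi, y}$ at $\delta = \eta_\xi(y)$, note that $h_{\xi, y}$ is an upward-opening quadratic in $\delta$ with constant term $-4(1-y)(1+\xi)^2 < 0$, so its real roots have opposite signs; the positive root $\eta_\xi(y)$ is the larger of the two, at which $\partial_\delta h_{\xi, y} > 0$. Hence $\partial_\xi \eta_\xi(y) = -\partial_\xi h / \partial_\delta h > 0$.

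For Part (1), the same idea applied to $y$ yields $\partial_y h_{\xi, y} = \delta^2 + 10\delta + 4(1+\xi)^2 > 0$, so $\partial_y \eta_\xi(y) < 0$ and $\eta_\xi$ is strictly decreasing in $y$. For $\beta(y)$, on the upper piece $y \in [\cos(2\pi/10), 1]$, $\beta(y) = b(y) = 16/(y+1) - 7$ is visibly strictly decreasing. On the lower piece $y \in [\cos(2\pi/9), \cos(2\pi/10)]$, $\beta(y)$ is affine in $b(y)$ with slope $(2 - b(\cos(2\pi/10)))/(b(\cos(2\pi/9)) - b(\cos(2\pi/10)))$, which is positive since $b$ is decreasing and $b(\cos(2\pi/10)) < 2$ by \eqref{eq:bnExpression}; composed with the strictly decreasing $b(y)$, this yields strict decrease of $\beta$ on the piece. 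Continuity at the interface $\cos(2\pi/10)$ is a direct check, giving global strict monotonicity.

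For the range consequences, $y_i = \cos(2\pi/n_i) \in [\cos(2\pi/9), 1)$ since $n_i \geq 9$. Monotonicity then gives $\beta_i \in \bigl(\lim_{y \to 1^-} \beta(y),\, \beta(\cos(2\pi/9))\bigr] = (1, 2] \subset [1, 2]$ and $\eta_i \leq \eta_\xi(\cos(2\pi/9))$. By Part (2), $\eta_\xi(\cos(2\pi/9))$ is increasing in $\xi$, so the bound $\eta_i \leq 0.13$ reduces to verifying $h_{0.13, \cos(2\pi/9)}(0.13) \geq 0$; combined with $h_{\xi, y}(0) < 0$, this forces the positive root to lie in $(0, 0.13]$. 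I expect this final numerical check to be the main obstacle, since $f_{0.13}(0.13)$ and $\cos(2\pi/9)$ are numerically close and the estimate must be carried out with sufficient precision to rigorously confirm the inequality.
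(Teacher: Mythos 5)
Your proposal is correct and follows essentially the same route as the paper: strict monotonicity of $\eta_{\xi}$ in $y$ and $\xi$, the elementary monotonicity of the two pieces of $\beta$, and reduction of the bound $\eta_i\le 0.13$ to the sign check $h_{\xi,\cos\left(\frac{2\pi}{9}\right)}(0.13)>0$ at the extreme value of $(1+\xi)^2$ (which the paper verifies exactly as you propose, with margin roughly $0.035$, so your anticipated ``main obstacle'' is harmless). The only difference is cosmetic: you obtain the derivatives of $\eta_{\xi}$ by implicit differentiation of the defining quadratic, whereas the paper differentiates the explicit root formula and simplifies; both computations are valid.
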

\begin{proof}
    (1). The monotonicity of $\beta(y)$ is obvious. Let
    $$
    \begin{aligned}
         S(y):=& \sqrt{\big(2+5y-(1+\xi)^2\big)^2+4(2+y)(1-y)(1+\xi)^2}\\
         =&\eta(y)(2+y)+2+5y-(1+\xi)^2.
    \end{aligned}
    $$
    Substituting this into the expression for $\eta_{\xi}'(y)$, we obtain
    \begin{equation}\label{eq:DecreasingOfEta}
    \begin{aligned}
    \frac{d\eta_{\xi}(y)}{dy} =&\frac{16+40 y-(18+y) (1+\xi)^2-(1+\xi)^4}{(2+y)^2 S(y)}-\frac{8+(1+\xi)^2}{(2+y)^2}\\
    =&-\frac{6(1+\xi)^2 + (8 + (1+\xi)^2)\eta_{\xi}(y)}{(2+y)\big(\eta_{\xi}(y)(2+y)+2+5y-(1+\xi)^2\big)}<0.
    \end{aligned}
    \end{equation}
    Since $\beta(1)=1$, $\beta\left(\cos(\frac{2\pi}{9}\right))=2$, $h_{\xi,y_i}(0)<0$ and
    $$
    \begin{aligned}
        h_{\xi,y_i}(0.13)\geq&\left(2+\cos\left(\frac{2\pi}{9}\right)\right)\times0.13^2+\left(2+5\times\cos\left(\frac{2\pi}{9}\right)-1.13^2\right)\times0.26\\
        &-4\left(1-\cos\left(\frac{2\pi}{9}\right)\right)\times1.13^2>0,
    \end{aligned}
    $$
     it follows that $\eta_i\in[0,0.13]$  and $\beta_i\in[1,2]$ for all $i\in\{2,3,5,6\}$.
    
    (2). This follows directly from the fact that
    $$
    \frac{\partial\eta_{\xi}}{\partial\xi}=\frac{2(1+\xi)}{y+2}\left(1+\frac{-(2+5y - (1+\xi)^2)+2(2+y)(1-y) }{\sqrt{\Big(2+5y - (1+\xi)^2 \Big)^2 + 4(2+y)(1-y)(1+\xi)^2}}\right)>0.
    $$
\end{proof}

\begin{remark}\label{rmk:phiandpsi}
    Fix $\xi\in[0,0.13]$. Let $y_i=\cos(\frac{2\pi}{n_i})$ for each $i\in\{2,3,5,6\}$, where $n_i\in\mathbb{N}\cap[9,\infty)$. Suppose that $x_1=1+\delta$, $x_4\in[1,2]$, and $x_i\in[1+\eta_i(y_i),\beta_i(y_i)]$ for each $i\in\{2,3,5,6\}$. It follows from \eqref{eq:phi-x1-x6} that
    $$
    \begin{aligned}
        \phi(x)=&\frac{x_2x_3 + x_5x_6 + x_1x_2x_5 + x_1x_3x_6 - x_1^2x_4 + x_4}{\sqrt{2x_1x_2x_6 + x_1^2 + x_2^2 + x_6^2 - 1}\sqrt{2x_1x_3x_5 + x_1^2 + x_3^2 + x_5^2 - 1}}\\
        \geq& \frac{(x_2+x_5)(x_3 +x_6) + \delta(x_2x_5 + x_3x_6) - 2\delta(\delta+2)}{\sqrt{(x_2 + x_6)^2+\delta^2+2\delta(1+ x_2x_6)}\sqrt{(x_3+ x_5)^2 +\delta^2+2\delta(1+x_3 x_5)}}\\
        =&\frac{1+\dfrac{ \delta(x_2x_5 + x_3x_6) - 2\delta(\delta+2)}{(x_2+x_5)(x_3 +x_6)}}{\sqrt{1+\dfrac{\delta^2+2\delta(1+ x_2x_6)}{(x_2 + x_6)^2}}\sqrt{1 +\dfrac{\delta^2+2\delta(1+x_3 x_5)}{(x_3+ x_5)^2}}}\\
        \geq& \frac{1 + \dfrac{\delta[(1+\eta_{2})(1+\eta_{5})+(1+\eta_{3})(1+\eta_{6})] - 2\delta(\delta + 2)}{(x_{2}+x_{6})(x_{3}+x_{5})} }{\sqrt{1+ \dfrac{\delta^2+2(1+\beta_{2}\beta_{6})\delta}{(2+\eta_{2}+\eta_{6})^2}}\sqrt{1+ \dfrac{\delta^2+2(1+\beta_{3}\beta_{5})\delta}{(2+\eta_{3}+\eta_{5})^2}}}\\
        \geq&G(\delta,\eta_2,\eta_3,\eta_5,\eta_6,\beta_2,\beta_3,\beta_5,\beta_6)\\
        =&\psi_{\xi}(\delta,y_2,y_3,y_5,y_6).
    \end{aligned}
    $$
    Here, the last inequality holds because
    $$
    \delta\big[(1+\eta_2)(1+\eta_5)+(1+\eta_3)(1+\eta_6)\big]-2\delta(\delta+2)<0,
    $$
    which follows from the fact that $\eta_i\in[0,0.13]$ for each $i\in\{2,3,5,6\}$ by \cref{lem:EtaAndBeta}.
    Therefore, the function $\psi_{\xi}(\delta,y_2,y_3,y_5,y_6)$ provides a systematic way to control the dihedral angles.
\end{remark}

Denote $2'=6$, $3'=5$, $5'=3$, and $6'=2$. We now establish several lemmas concerning the monotonicity properties of the function $\psi_{\xi}(\delta,y_2,y_3,y_5,y_6)$.

\begin{lemma}\label{lem:PsiInf1}
    Fix $\xi\in[0,0.13]$. Assume that $y_i\in[\cos(\frac{2\pi}{9}),1)$ for all $i\in\{2,3,5,6\}$. Then, for any $j\in\{2,3,5,6\}$ satisfying $y_j\geq \max\{y_{j'},\cos(\frac{2\pi}{10})\}$,
    $$
    \frac{\partial\psi}{\partial y_j}(\delta,y_2,y_3,y_5,y_6)\geq0
    $$
    for all $\delta\in[0,0.13]$.
\end{lemma}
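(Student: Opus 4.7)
The plan is to apply the chain rule and reduce the claim to a sign comparison between two explicit channels. Writing $\psi(\delta,y_2,y_3,y_5,y_6) = G(\delta,\eta_2,\eta_3,\eta_5,\eta_6,\beta_2,\beta_3,\beta_5,\beta_6)$ with $\eta_i = \eta_\xi(y_i)$ and $\beta_i = \beta(y_i)$, I have
$$
\frac{\partial \psi}{\partial y_j} \;=\; \frac{\partial G}{\partial \eta_j}\,\eta'_\xi(y_j) \;+\; \frac{\partial G}{\partial \beta_j}\,\beta'(y_j).
$$
From \cref{lem:EtaAndBeta}(1), $\eta'_\xi(y_j) < 0$, and since $y_j \geq \cos(2\pi/10)$, the definition of $\beta$ collapses to $\beta(y_j) = b(y_j) = 16/(y_j+1) - 7$, giving $\beta'(y_j) = -16/(y_j+1)^2 < 0$ explicitly.

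Next I would determine the signs of the two partial derivatives of $G$. Let $A = 2+\eta_j+\eta_{j'}$ and $D_1 = \sqrt{1 + (\delta^2 + 2(1+\beta_j\beta_{j'})\delta)/A^2}$ be the block that depends on $\eta_j$ and $\beta_j$. Because $\beta_j$ enters $G$ only through the factor $1+\beta_j\beta_{j'}$ inside $D_1$, increasing $\beta_j$ strictly enlarges $D_1$ and so strictly decreases $G$; hence $\partial G/\partial \beta_j < 0$. For $\partial G/\partial \eta_j$, I would first verify quantitatively that the numerator correction of $N$, namely $\delta P - Q$ with $P = (1+\eta_2)(1+\eta_5)+(1+\eta_3)(1+\eta_6)$ and $Q = 2\delta(\delta+2)$, is negative in the admissible range: since $P \leq 2\cdot 1.13^2 < 2.56$ and $Q \geq 4\delta$, one has $\delta P - Q \leq -1.44\,\delta$, and therefore $N > 4/5$. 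With $N>0$, raising $\eta_j$ both enlarges $N$ (the negative fraction in $N$ becomes less negative) and shrinks $D_1$, so $\partial G/\partial \eta_j > 0$. The two terms in the chain-rule expansion thus have opposite signs, and the target inequality becomes the dominance statement
$$
\left|\frac{\partial G}{\partial \beta_j}\right| \cdot |\beta'(y_j)| \;\geq\; \left|\frac{\partial G}{\partial \eta_j}\right| \cdot |\eta'_\xi(y_j)|.
$$

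To prove this dominance, I would put both partial derivatives of $G$ over a common polynomial denominator (involving $A^3 D_1^3 D_2$) and substitute the closed forms for $\eta'_\xi(y_j)$ given in \eqref{eq:DecreasingOfEta} and $\beta'(y_j) = -16/(y_j+1)^2$. The crucial structural leverage comes from the asymmetry hypothesis $y_j \geq y_{j'}$, which via the monotonicities in \cref{lem:EtaAndBeta}(1) forces $\eta_j \leq \eta_{j'}$ and $\beta_j \leq \beta_{j'}$; this is what keeps the (negative) $\eta_j$-channel from dominating, since on the diagonal $y_j = y_{j'}$ the inequality is tight and the pairing structure in $G$ strengthens it away from that diagonal.

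The main obstacle will be the algebraic bulk of the resulting rational inequality. I expect to handle it by exploiting the large quantitative gap between $|\beta'(y_j)| = 16/(y_j+1)^2 > 4$ (since $y_j < 1$) and the considerably smaller $|\eta'_\xi(y_j)|$, which by \eqref{eq:DecreasingOfEta} is bounded above by roughly $(1+\xi)^2/(2+y_j)$ up to a factor close to $1$, and hence is of order $1/2$ in the admissible parameter range. Combined with the polynomial bounds on $\partial G/\partial \beta_j$ and $\partial G/\partial \eta_j$ supplied by $\eta_i \in [0,0.13]$, $\beta_i \in [1,2]$, $\delta \in [0,0.13]$, this margin should close the proof. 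The condition $y_j \geq \cos(2\pi/10)$ is essential, since only there does $\beta$ take the smooth explicit form $b(y_j)$; on $[\cos(2\pi/9),\cos(2\pi/10)]$ the piecewise-linear interpolation would change the derivative $\beta'(y_j)$ and weaken the margin.
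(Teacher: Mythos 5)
Your setup coincides with the paper's: the same chain-rule split into an $\eta$-channel and a $\beta$-channel, the same sign analysis ($\partial G/\partial\eta_j\ge 0$, $\partial G/\partial\beta_j\le 0$, $\eta_\xi'<0$, $\beta'=-16/(y_j+1)^2<0$ on the branch $y_j\ge\cos(2\pi/10)$), and the same use of $y_j\ge y_{j'}$ via \cref{lem:EtaAndBeta} to get $\eta_{j'}\ge\eta_j$, $\beta_{j'}\ge\beta_j$. The problem is that you stop exactly where the lemma's actual content begins: the dominance inequality
$\bigl|\partial G/\partial\beta_j\bigr|\,|\beta'(y_j)|\ \ge\ \bigl|\partial G/\partial\eta_j\bigr|\,|\eta_\xi'(y_j)|$
is only asserted ("this margin should close the proof"), not proved. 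In the paper this is the inequality $A_1A_2\le BA_3$, and verifying it is the entire technical work: one bounds $A_1/A_3$, $A_2$ and $B$ using $\eta_i\in[0,0.13]$, $\beta_i\in[1,2]$, $\beta_2\beta_6\le\beta_6^2$ (this is where $y_j\ge y_{j'}$ is actually cashed in), reduces the claim to a quadratic inequality $\alpha_2\beta_6^2+\alpha_1\beta_6+\alpha_0\ge 0$ which is concave in $\beta_6$ because $\alpha_2<0$, and then checks the two endpoints $\beta_6=1,2$ numerically, using $y_2\ge\cos(2\pi/9)$ to get strict positivity. Nothing in your sketch substitutes for this step.

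Moreover, your heuristic margin argument is not by itself sound: comparing only $|\beta'(y_j)|>4$ with $|\eta_\xi'(y_j)|\approx 0.5$–$0.7$ ignores that the prefactors are not of comparable size. A rough computation from \eqref{eq:G-def} gives $|\partial G/\partial\beta_j|\approx N\delta\beta_{j'}/(A^2D_1^3D_2)$, while $|\partial G/\partial\eta_j|$ collects three contributions (two from the numerator of $G$, one from $D_1$) and can exceed $|\partial G/\partial\beta_j|$ by a factor of roughly $2$–$4$ in the admissible range; so the final margin, while real, is much tighter than the ratio of the outer derivatives suggests and genuinely requires the quantitative bookkeeping the paper performs. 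Also, your remark that the inequality is "tight on the diagonal $y_j=y_{j'}$" is unsubstantiated (the paper's endpoint estimates show a strict margin there too) and signals that the decisive computation was not carried out. Incidentally, note also that $|\eta_\xi'|$ is not $(1+\xi)^2/(2+y_j)$ up to a factor near $1$; from \eqref{eq:DecreasingOfEta} it is approximately $6(1+\xi)^2/\bigl[(2+y)\bigl(2+5y-(1+\xi)^2\bigr)\bigr]$, which happens to be of the same order but for a different reason. To complete the proof you must either reproduce the paper's reduction to the concave quadratic in $\beta_{j'}$ with endpoint checks, or supply an equally explicit chain of bounds closing the dominance inequality.
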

\begin{proof}
    By definition, the function $\psi$ has the following symmetry property:
    \begin{equation}\label{eq:psi-Symmetry}
        \psi_{\xi}(\delta,y_2,y_3,y_5,y_6)=\psi_{\xi}(\delta,y_5,y_6,y_2,y_3)=\psi_{\xi}(\delta,y_6,y_5,y_3,y_2).
    \end{equation}
    Then, it suffices to consider the case $y_2\geq \max\{y_{6},\cos(\frac{2\pi}{10})\}$. Note that $\beta(y)=\frac{16}{y_i+1}-7$ for all $y\geq\cos(\frac{2\pi}{10})$. Therefore, using the expression \eqref{eq:DecreasingOfEta} of $\frac{d\eta_{\xi}}{d y}$, we obtain
    $$
    \frac{\partial\psi}{\partial y_2}=\frac{\partial G}{\partial \eta_2}\frac{d\eta_2}{d y_2}+\frac{\partial G}{\partial \beta_2}\frac{d\beta_2}{d y_2}=
    \frac{\partial G}{\partial \eta_2}\frac{d\eta_{\xi}}{d y}+\frac{\partial G}{\partial \beta_2}\frac{d\beta}{d y}=\left(-\frac{A_1A_2}{A_3}+B\right)\cdot C,
    $$
    where
    $$
    \begin{aligned}
    A_1:
    =&\big(\delta^2+2\delta(1+\beta_2\beta_6)+4\big)(1+\eta_5)+(2\beta_2\beta_6+\delta)(2+\eta_3+\eta_5)+(4+2\delta)(2+\eta_2+\eta_6)\\
    &+(\eta_6^2+3\eta_6+\eta_2\eta_6+\eta_2)(\eta_5-\eta_3),\\
    A_2:=&6(1+\xi)^2 + (8 + (1+\xi)^2)\eta_2,\\
    A_3:=&(2+y_2)(2+\eta_3+\eta_5)(2+\eta_2+\eta_6)\big(\eta_2(2+y_2)+2+5y_2-(1+\xi)^2\big)
    \end{aligned}
    $$
    and
    $$
    \begin{aligned}
        B:=&\frac{16\beta_6 \left(1+\dfrac{\delta(-2-2\delta+\eta_2+\eta_3+\eta_5+\eta_2\eta_5+\eta_6+\eta_3\eta_6)}{(2+\eta_3+\eta_5)(2+\eta_2+\eta_6)}\right)}{(1+y_2)^2},\\
        C:=&\frac{\delta}{\sqrt{1+\dfrac{\delta(2+2\beta_3\beta_5+\delta)}{(2+\eta_3+\eta_5)^2}} (2+\eta_2+\eta_6)^2 \left(1+\dfrac{\delta(2+2\beta_2\beta_6+\delta)}{(2+\eta_2+\eta_6)^2}\right)^{3/2}}\geq0.
    \end{aligned}
    $$
    
    We claim that ${A_1A_2}\leq B{A_3}$, which implies $\frac{\partial\psi}{\partial y_2}\geq 0$. Note that by \cref{lem:EtaAndBeta}, we have $\beta_i\in[1,2]$ and $\eta_i\in[0,\eta_0]$ for all $i\in\{2,3,5,6\}$, where $\eta_0=0.13$.
    Let
    $$
    A_4:=(2+y_2)\big(\eta_2(2+y_2)+2+5y_2-(1+\xi)^2\big).
    $$
    Since $y_6\leq y_2$, by \cref{lem:EtaAndBeta}, we have $\beta_6\geq\beta_2$ and $\eta_6\geq\eta_2$. Therefore,
    $$
    A_4\geq(2+y_2)\big(\eta_2(2+y_2)+2+5y_2-(1+\eta_0)^2\big)
    $$
    and consequently
    $$
    A_3= A_4(2+\eta_3+\eta_5)(2+\eta_2+\eta_6)\geq 4(2+y_2)\big(\eta_2(2+y_2)+2+5y_2-(1+\eta_0)^2\big)(1+\eta_2).
    $$
    Furthermore,
    $$
    \begin{aligned}
        \frac{A_1}{A_3}=&\frac{\big(\delta^2+2\delta(1+\beta_2\beta_6)+4\big)(1+\eta_5)}{{A_3}}+\frac{2\beta_2\beta_6+\delta}{A_4(2+\eta_2+\eta_6)}+
        \frac{4+2\delta}{A_4(2+\eta_3+\eta_5)}\\
        &+\frac{(\eta_6^2+3\eta_6+\eta_2\eta_6+\eta_2)(\eta_5-\eta_3)}{A_3}\\
        \leq &\frac{\big(\eta_0^2+2\eta_0(1+\beta_6^2)+4\big)(\eta_0+1)}{{A_3}}+\frac{2\beta_6^2+\eta_0}{2A_4(1+\eta_2)}+
        \frac{4+2\eta_0}{2A_4}+\frac{(\eta_0^2+3\eta_0+\eta_2\eta_0+\eta_2)\eta_0}{A_3}\\
        \leq &\frac{\big(\eta_0^2+2\eta_0(1+\beta_6^2)+4\big)(\eta_0+1)+4\beta_6^2+2\eta_0+2(4+2\eta_0)(1+\eta_2)+(\eta_0^2+3\eta_0+\eta_2\eta_0+\eta_2)\eta_0}{4(2+y_2)\big(\eta_2(2+y_2)+2+5y_2-(1+\eta_0)^2\big)(1+\eta_2)}
    \end{aligned}
    $$
    and
    $$
    \begin{aligned}
        A_2&\leq6(1+\eta_0)^2+(8+(1+\eta_0)^2)\eta_2\\
        B&\geq\frac{16\beta_6 \left(1+\frac{\eta_0(-2-2\eta_0+2\eta_2)}{4(1+\eta_2)}\right)}{(1+y_2)^2}.
    \end{aligned}
    $$
    Thus, to show $BA_3\geq{A_1A_2}$, we only need to prove that
    $$
    \begin{aligned}
        & {16\beta_6 (2+y_2)\Big(4(1+\eta_2)+{\eta_0(-2-2\eta_0+2\eta_2)}\Big)}\Big(\eta_2(2+y_2)+2+5y_2-(1+\eta_0)^2\Big)\\
        \geq&\Big(\big(\eta_0^2+2\eta_0(1+\beta_6^2)+4\big)(\eta_0+1)+4\beta_6^2+2\eta_0+2(4+2\eta_0)(1+\eta_2)+(\eta_0^2+3\eta_0+\eta_2\eta_0+\eta_2)\eta_0\Big)\\
        &\cdot\Big(6(1+\eta_0)^2+(8+(1+\eta_0)^2)\eta_2\Big)(1+y_2)^2.
    \end{aligned}
    $$
    
    Upon expansion, the above inequality is equivalent to 
    \begin{equation}\label{eq:quadraticInBeta}
        \alpha_2\beta_6^2+\alpha_1\beta_6+\alpha_0\geq0,
    \end{equation}
    where
    $$
    \begin{aligned}
    \alpha_2:=&-2(1+y_2)^2(2+\eta_0+\eta_0^2)\big((6+\eta_2)(1+\eta_0)^2+8\eta_2\big),\\
    \alpha_1:=&32(2+y_2)(2+\eta_0)(1+\eta_2-\eta_0)\big(\eta_2(2+y_2)+2+5y_2-(1+\eta_0)^2\big),\\
    \alpha_0:=&-(1+y_2)^2 \big((6+\eta_2)(1+\eta_0)^2+8\eta_2\big) (12+8\eta_2+2\eta_0^3+\eta_0^2(6+\eta_2)+\eta_0(12+5\eta_2)).
    \end{aligned}
    $$ 
    Since $\alpha_2<0$, the quadratic polynomial \eqref{eq:quadraticInBeta} in $\beta_6$ is concave for each $y_2\in[\cos(\frac{2\pi}{9}),1)$ and $\eta_2\in[0,0.13]$. Hence, to verify \eqref{eq:quadraticInBeta} for all $\beta_6 \in [1,2]$, it suffices to check the the inequality at the endpoints $\beta_6=1$ and $\beta_6=2$.
    
    For $\beta_6=1$, a direct numerical estimate yields
    $$
    \begin{aligned}
         \alpha_2\beta_6^2+\alpha_1\beta_6+\alpha_0
         \geq&-52+360 y_2+158 y_2^2+102 \eta_2+502 y_2 \eta_2+167 y_2^2\eta_2\\
         &+192 \eta_2^2+111 y_2 \eta_2^2-13 y_2^2 \eta_2^2\\
         \geq&-52+360\times\cos\left(\frac{2\pi}{9}\right)-13>0.
    \end{aligned}
    $$
    For $\beta_6=2$, we similarly obtain
    $$
    \begin{aligned}
        \alpha_2\beta_6^2+\alpha_1\beta_6+\alpha_0
        \geq&-65+799 y_2+356 y_2^2+319\eta_2+464\eta_2^2+1231 \eta_2 y_2\\
        &+384 \eta_2^2 y_2+447\eta_2 y_2^2+55 \eta_2^2 y_2^2\\
        \geq&-65+799\times\cos\left(\frac{2\pi}{9}\right)
        >0.
    \end{aligned}
    $$
    It follows that $\frac{\partial\psi}{\partial y_2}\geq0$ whenever $y_2\geq y_6$, which completes the proof.

\end{proof}

\begin{lemma}\label{lem:PsiInf2}
    Fix $\xi\in[0,0.13]$. Assume that $y_i\in[\cos(\frac{2\pi}{9}),1)$ for all $i\in\{2,3,5,6\}$. Then, for any $j\in\{2,3,5,6\}$ satisfying $y_j, y_{j'}\leq\cos\left(\frac{2\pi}{10}\right)$,
    $$
    \frac{\partial\psi}{\partial y_j}(\delta,y_2,y_3,y_5,y_6)\geq0\quad\text{and}\quad\frac{\partial\psi}{\partial y_{j'}}(\delta,y_2,y_3,y_5,y_6)\geq0
    $$
    for all $\delta\in[0,0.13]$.
\end{lemma}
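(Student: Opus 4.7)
The plan mirrors the strategy of \cref{lem:PsiInf1}. Exploiting the symmetries $\psi(\delta, y_2, y_3, y_5, y_6) = \psi(\delta, y_6, y_5, y_3, y_2) = \psi(\delta, y_5, y_6, y_2, y_3)$, it suffices to treat the case $j = 2$, $j' = 6$: each of the three remaining indices $j \in \{3, 5, 6\}$ can be pulled back to this case along an involution that preserves the hypothesis $y_j, y_{j'} \leq \cos(2\pi/10)$. So I focus on proving $\partial\psi/\partial y_2 \geq 0$ under the assumption $y_2, y_6 \in [\cos(2\pi/9), \cos(2\pi/10)]$ (with $y_3, y_5$ ranging freely in $[\cos(2\pi/9), 1)$).

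As in \cref{lem:PsiInf1}, the chain rule gives
$$\frac{\partial\psi}{\partial y_2} = \frac{\partial G}{\partial \eta_2}\cdot\frac{d\eta_\xi}{dy} + \frac{\partial G}{\partial \beta_2}\cdot\frac{d\beta}{dy},$$
with the first summand non-positive and the second non-negative, so the goal is to show that the second dominates in magnitude. The essential new feature relative to \cref{lem:PsiInf1} is that on $[\cos(2\pi/9), \cos(2\pi/10)]$ one has
$$\frac{d\beta(y)}{dy} = \frac{2 - b(\cos(2\pi/10))}{b(\cos(2\pi/9)) - b(\cos(2\pi/10))}\cdot b'(y),$$
a different constant multiple of $b'(y)$ than the one used in \cref{lem:PsiInf1}. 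Substituting the explicit formulas for $\partial G/\partial \eta_2$ and $\partial G/\partial \beta_2$ derived there, and clearing the positive common denominator, reduces the desired inequality to a polynomial inequality in the variables $(\delta, \eta_2, \eta_3, \eta_5, \eta_6, \beta_2, \beta_3, \beta_5, \beta_6, y_2)$ over an explicit product box defined by $\delta \in [0, 0.13]$, the bounds $\eta_i \in [0, 0.13]$ of \cref{lem:EtaAndBeta}, the sharper bounds $\beta_i \in [b(\cos(2\pi/10)), 2]$ forced by $y_i \leq \cos(2\pi/10)$, and $y_2 \in [\cos(2\pi/9), \cos(2\pi/10)]$.

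The main obstacle is the loss of the ordering $y_2 \geq y_6$ that was used in \cref{lem:PsiInf1} to deduce $\beta_6 \geq \beta_2$ and $\eta_6 \geq \eta_2$; the reduction must therefore rely only on the uniform box bounds above. The compensating strength comes from the sharper lower bound $\beta_i \geq b(\cos(2\pi/10))$ available in this regime (rather than the weaker $\beta_i \geq 1$ used previously). I would then verify the resulting polynomial inequality by exactly the scheme of \cref{lem:PsiInf1}: show it is concave quadratic in $\beta_6$ and check the two endpoints $\beta_6 = b(\cos(2\pi/10))$ and $\beta_6 = 2$. The numerical estimates will be bulkier than in \cref{lem:PsiInf1} because the constants involve $b(\cos(2\pi/10))$ rather than the round value $2$, but substituting $\cos(2\pi/9) \geq 0.766$ and $\cos(2\pi/10) \geq 0.809$ and expanding in $\eta_2, \eta_3, \eta_5$ should reduce each endpoint check to a short arithmetic estimate that closes the argument.
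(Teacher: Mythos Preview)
Your overall plan is sound and correctly isolates the new ingredient (the modified slope $d\beta/dy$ on $[\cos(2\pi/9),\cos(2\pi/10)]$, which introduces the factor $B_0=\frac{2-b(\cos(2\pi/10))}{b(\cos(2\pi/9))-b(\cos(2\pi/10))}$), but the paper's execution is considerably simpler than what you propose. Rather than rerunning the concave-quadratic-in-$\beta_6$ endpoint scheme of \cref{lem:PsiInf1}, the paper simply bounds each factor $A_1,A_2,A_3,A_4,B$ numerically. The key point you partly anticipate is that the hypothesis $y_2,y_6\le\cos(2\pi/10)$ gives, via \cref{lem:EtaAndBeta}, \emph{two} sharper bounds: $\beta_2,\beta_6\ge\beta(\cos(2\pi/10))\ge 1.84$ and also $\eta_2,\eta_6\ge\eta_0(\cos(2\pi/10))\ge 0.07$. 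You use the first but not the second; the paper uses both, and with them the crude estimates $A_4\ge 13.13$, $A_3\ge 56.19$, $A_2\le 8.87$, $A_1/A_3\le 0.562$, $B\ge 8.37$, $B_0\ge 0.7$ already give $B_0B\ge A_1A_2/A_3$, hence $\partial\psi/\partial y_2\ge 0$, with no polynomial expansion at all.

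Your quadratic scheme is not wrong in principle, but note that in \cref{lem:PsiInf1} the reduction to a quadratic in $\beta_6$ alone relied on $y_2\ge y_6\Rightarrow\beta_2\le\beta_6$ to replace $\beta_2\beta_6$ by $\beta_6^2$; without that ordering you would need to track $\beta_2$ and $\beta_6$ separately (e.g.\ do endpoints in both), which is more work than the paper's direct estimate. Incorporating the lower bound $\eta_2,\eta_6\ge 0.07$ and switching to the paper's direct numerical route is the cleaner fix.
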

\begin{proof}
    It suffices to consider the case $\{j,j'\}=\{2,6\}$. Indeed, the remaining case $\{j,j'\}=\{3,5\}$ follows immediately from the symmetry property \eqref{eq:psi-Symmetry} of $\psi$. 
    
    By \cref{lem:EtaAndBeta}, we have $\beta_i\in[1,2]$ and $\eta_i\in[0,0.13]$ for all $i\in\{2,3,5,6\}$.
    Moreover, since both $\eta_{\xi}(y)$ and $\beta(y)$ are strictly decreasing with respect to $y\in[\cos(\frac{2\pi}{9}),1)$ and $\eta_{\xi}(y)$ is strictly increasing with respect to $\xi\in[0,0.13]$ by \cref{lem:EtaAndBeta}, the assumption $y_2, y_6\leq\cos\left(\frac{2\pi}{10}\right)$ implies that 
    $$
    \beta_2,\beta_6\geq\beta\left(  \cos\left(\frac{2\pi}{10}\right) \right)\geq1.84,\quad\text{and}\quad \eta_2,\eta_6\geq\eta_{\xi}\left(  \cos\left(\frac{2\pi}{10}\right) \right)\geq\eta_{0}\left(  \cos\left(\frac{2\pi}{10}\right) \right)\geq0.07.
    $$
    Let $A_i$ $(i=1,2,3,4)$, $B$ and $C$ be defined as in \cref{lem:PsiInf1}. Then
    $$
    \frac{\partial\psi}{\partial y_2}=\frac{\partial G}{\partial \eta_2}\frac{d\eta_2}{d y_2}+\frac{\partial G}{\partial \beta_2}\frac{d\beta_2}{d y_2}=
    \frac{\partial G}{\partial \eta_2}\frac{d\eta_{\xi}}{d y}+\frac{\partial G}{\partial \beta_2}\frac{d\beta}{d y}=\left(-\frac{A_1A_2}{A_3}+B_0B\right)\cdot C,
    $$
    where $B_0=\frac{2-b\left(  \cos\left(\frac{2\pi}{10}\right) \right)}{b\left(\cos\left(\frac{2\pi}{9}\right)\right)-b\left(  \cos\left(\frac{2\pi}{10}\right) \right)}\geq 0.7$. We estimate that
    $$
    \begin{aligned}
        A_4\geq&(2+y_2)\Big(\eta_2(2+y_2)+2+5y_2-(1+\eta_0)^2\Big)\\
        \geq&\left(2+\cos\left(\frac{2\pi}{9}\right)\right)\left(0.07\times\left(2+\cos\left(\frac{2\pi}{9}\right)\right)+2+5\times\cos\left(\frac{2\pi}{9}\right)-1.13^2\right)\geq 13.13,\\
        A_3=& A_4(2+\eta_3+\eta_5)(2+\eta_2+\eta_6)\geq4\times1.07 A_4\geq 56.19,\\
        A_2\leq& 6\times1.13^2+(8+1.13^2)\times0.13\leq8.87.
    \end{aligned}
    $$ 
    It follows that
    $$
    \begin{aligned}
        \frac{A_1}{A_3}=&\frac{\big(\delta^2+2\delta(1+\beta_2\beta_6)+4\big)(1+\eta_5)}{{A_3}}+\frac{2\beta_2\beta_6+\delta}{A_4(\eta_2+\eta_6+2)}+
        \frac{4+2\delta}{A_4(2+\eta_3+\eta_5)}\\
        &+\frac{(\eta_6^2+3\eta_6+\eta_2\eta_6+\eta_2)(\eta_5-\eta_3)}{A_3}\\
        \leq&\frac{\big(0.13^2+0.26\times(1+2^2)+4\big)\times1.13}{A_3}+
        \frac{2\times2^2+0.13}{2\times1.08A_4}\\ &+\frac{4+0.26}{2A_4}
        +\frac{(2\times 0.13^2+4\times 0.13)\times0.13}{A_3}\\
        \leq& 0.562.
    \end{aligned}
    $$
    On the other hand,
    $$
    \begin{aligned}
        B\geq\frac{16\beta_6 \left(1+\frac{\delta(-2-2\delta)}{2(2+\eta_2+\eta_6)}\right)}{(1+y_2)^2}\geq\frac{16\times1.84\times \left(1-\frac{0.13\times2.26}{4\times(1.07)}\right)}{(1+\cos\left(\frac{2\pi}{10}\right))^2}\geq8.37\geq\frac{A_1A_2}{A_3 B_0}.
    \end{aligned}
    $$
    Therefore,
    $$
    \frac{\partial\psi}{\partial y_2}=\left(-\frac{A_1A_2}{A_3}+B_0B\right)\cdot C\geq0.
    $$ 
    A similar argument shows that $\frac{\partial\psi}{\partial y_6}(\delta,y_2,y_3,y_5,y_6)\geq0$.
\end{proof}

Now we show that, among all choices of $y_i=\cos(\frac{2\pi}{n_i})$ with $n_i\in\mathbb{N}\cap[9,\infty)$, $i\in\{2,3,5,6\}$, the function $\psi$ attains its minimum precisely when all $n_i$ equal $9$.

\begin{proposition}\label{prop:Inf-all9}
    Fix $\xi\in[0,0.13]$. Assume that $y_i=\cos(\frac{2\pi}{n_i})$ for all $i\in\{2,3,5,6\}$, where $n_i\in\mathbb{N}\cap[9,\infty)$. Then for all $\delta\in[0,0.13]$,
    $$
    \psi_{\xi}(\delta,y_2,y_3,y_5,y_6)\geq f_{\eta_{(9)}}(\delta),
    $$
    where $\eta_{(9)}:=\eta_{\xi}(\cos(\frac{2\pi}{9}))$.
\end{proposition}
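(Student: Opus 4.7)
The plan is to show that $\psi$ attains its minimum over $(y_2,y_3,y_5,y_6)\in[\cos(2\pi/9),1)^4$ at the corner point where all four coordinates equal $\cos(2\pi/9)$, and that this minimum value coincides with $f_{\eta_{(9)}}(\delta)$. The argument has two stages: first I identify the boundary value by direct substitution, and then I construct a $\psi$-non-increasing deformation from an arbitrary admissible tuple to that corner.

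For the boundary value, substituting $y_i=\cos(2\pi/9)$ for all $i\in\{2,3,5,6\}$ into the definitions \eqref{eq:DefOfEtaBeta} and \eqref{eq:psi-def} gives $\eta_i=\eta_{(9)}$ for every $i$. Evaluating the piecewise definition of $\beta$ at the left endpoint $\cos(2\pi/9)$ yields $\beta_i=2$ (a short computation with the linear interpolation formula). Feeding these values into the explicit form of $G$ in \eqref{eq:G-def} and simplifying reduces the expression to the formula for $f_{\eta_{(9)}}(\delta)$ displayed at the start of the section.

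For the monotone deformation, I exploit the symmetry \eqref{eq:psi-Symmetry} of $\psi$ in the pairs $(y_2,y_6)$ and $(y_3,y_5)$, together with the observation that the hypotheses of \cref{lem:PsiInf1} and \cref{lem:PsiInf2} involve only one of these pairs at a time. I therefore treat the two pairs independently and focus on $(y_2,y_6)$. After assuming without loss of generality that $y_2\ge y_6$, I deform $(y_2,y_6)$ to $(\cos(2\pi/9),\cos(2\pi/9))$ along a three-segment piecewise-linear path: (i) if $y_2>\max\{y_6,\cos(2\pi/10)\}$, decrease $y_2$ to $\max\{y_6,\cos(2\pi/10)\}$, where the condition of \cref{lem:PsiInf1} holds throughout the segment, so $\partial_{y_2}\psi\ge 0$ and $\psi$ is non-increasing; (ii) if the endpoint of (i) still satisfies $y_2>y_6$ (which forces $y_2=\cos(2\pi/10)>y_6$, so both coordinates are $\le\cos(2\pi/10)$), decrease $y_2$ further to $y_6$, where \cref{lem:PsiInf2} applies uniformly; (iii) from the diagonal point $y_2=y_6=:s$, decrease $s$ along the diagonal to $\cos(2\pi/9)$, noting that $\tfrac{d}{ds}\psi=\partial_{y_2}\psi+\partial_{y_6}\psi\ge 0$ everywhere along the diagonal by \cref{lem:PsiInf1} when $s\ge\cos(2\pi/10)$ and by \cref{lem:PsiInf2} when $s\le\cos(2\pi/10)$. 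Repeating the same reduction for the pair $(y_3,y_5)$ with $(y_2,y_6)=(\cos(2\pi/9),\cos(2\pi/9))$ frozen completes the deformation.

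The main technical subtlety is ensuring that the hypotheses of the two monotonicity lemmas hold at every intermediate point of each segment rather than only at its endpoints; the case analysis above, driven by the relative positions of $y_2$, $y_6$, and the threshold $\cos(2\pi/10)$, is tailored so that exactly one of \cref{lem:PsiInf1} or \cref{lem:PsiInf2} applies uniformly on each segment. Because $\psi$ is continuous and non-increasing along the entire deformation, and because the terminal value equals $f_{\eta_{(9)}}(\delta)$ by the first stage, the desired inequality $\psi(\delta,y_2,y_3,y_5,y_6)\ge f_{\eta_{(9)}}(\delta)$ follows.
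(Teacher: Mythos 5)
Your proposal is correct and follows essentially the same route as the paper: evaluate $\psi$ at the corner $y_2=y_3=y_5=y_6=\cos\left(\frac{2\pi}{9}\right)$ (where $\eta_i=\eta_{(9)}$, $\beta_i=2$, and $G$ collapses to $f_{\eta_{(9)}}(\delta)$) and then push an arbitrary admissible tuple to that corner using the monotonicity in \cref{lem:PsiInf1} and \cref{lem:PsiInf2}, including the diagonal trick $\frac{d}{ds}\psi=\partial_{y_2}\psi+\partial_{y_6}\psi\geq0$ that the paper uses in its Case 3.3. The only difference is organizational: the paper argues by cases on how many $n_i$ equal $9$, whereas you package the same lemma applications into a uniform three-segment deformation applied pair by pair, which is valid since the lemmas only require $y_i\in[\cos\left(\frac{2\pi}{9}\right),1)$ and their hypotheses involve one opposite pair at a time.
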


\begin{proof}
   
    Let $M$ denote the number of $i\in\{2,3,5,6\}$ such that $n_i=9$.
    
    \textbf{Case 1.} $M=4$. 
    Since $\beta(\cos(\frac{2\pi}{9}))=2$, it follows that
    $$
    \psi_{\xi}(\delta,y_2,y_3,y_5,y_6)=\psi_{\xi}\left(\delta,\cos\left(\frac{2\pi}{9}\right),\cos\left(\frac{2\pi}{9}\right),\cos\left(\frac{2\pi}{9}\right),\cos\left(\frac{2\pi}{9}\right)\right)=f_{\eta_{(9)}}(\delta).
    $$

    \textbf{Case 2.} $M=3$. By the symmetry property \eqref{eq:psi-Symmetry} of $\psi$, we may assume that $n_2\geq10$ and $n_3=n_5=n_6=9$. It follows from \cref{lem:PsiInf1} and \cref{lem:PsiInf2} that
    $$
    \begin{aligned}
        \psi_{\xi}(\delta,y_2,y_3,y_5,y_6)\geq&\psi_{\xi}\left(\delta,\cos\left(\frac{2\pi}{10}\right),y_3,y_5,y_6\right)\\
        =&\psi_{\xi}\left(\delta,\cos\left(\frac{2\pi}{10}\right),\cos\left(\frac{2\pi}{9}\right),\cos\left(\frac{2\pi}{9}\right),\cos\left(\frac{2\pi}{9}\right)\right)\\
        \geq&\psi_{\xi}\left(\delta,\cos\left(\frac{2\pi}{9}\right),\cos\left(\frac{2\pi}{9}\right),\cos\left(\frac{2\pi}{9}\right),\cos\left(\frac{2\pi}{9}\right)\right)=f_{\eta_{(9)}}(\delta).
    \end{aligned}
    $$

    \textbf{Case 3.} $M=2$. Again, by the symmetry property \eqref{eq:psi-Symmetry} of $\psi$, we may assume that $n_2\geq10$.
    \begin{enumerate}
        \item[]\textbf{Case 3.1.} $n_3\geq
        10$ and $n_5=n_6=9$. It follows from \cref{lem:PsiInf1} and \cref{lem:PsiInf2} that
        $$
        \begin{aligned}
            \psi_{\xi}(\delta,y_2,y_3,y_5,y_6)\geq&\psi_{\xi}\left(\delta,\cos\left(\frac{2\pi}{10}\right),\cos\left(\frac{2\pi}{10}\right),y_5,y_6\right)\\
            =&\psi_{\xi}\left(\delta,\cos\left(\frac{2\pi}{10}\right),\cos\left(\frac{2\pi}{10}\right),\cos\left(\frac{2\pi}{9}\right),\cos\left(\frac{2\pi}{9}\right)\right)\\
            \geq&\psi_{\xi}\left(\delta,\cos\left(\frac{2\pi}{9}\right),\cos\left(\frac{2\pi}{9}\right),\cos\left(\frac{2\pi}{9}\right),\cos\left(\frac{2\pi}{9}\right)\right)=f_{\eta_{(9)}}(\delta).
        \end{aligned}
        $$

        \item[]\textbf{Case 3.2.} $n_5\geq
        10$ and $n_3=n_6=9$. It follows from \cref{lem:PsiInf1} and \cref{lem:PsiInf2} that
        $$
        \begin{aligned}
            \psi_{\xi}(\delta,y_2,y_3,y_5,y_6)\geq&\psi_{\xi}\left(\delta,\cos\left(\frac{2\pi}{10}\right),y_3,\cos\left(\frac{2\pi}{10}\right),y_6\right)\\
            =&\psi_{\xi}\left(\delta,\cos\left(\frac{2\pi}{10}\right),\cos\left(\frac{2\pi}{9}\right),\cos\left(\frac{2\pi}{10}\right),\cos\left(\frac{2\pi}{9}\right)\right)\\
            \geq&\psi_{\xi}\left(\delta,\cos\left(\frac{2\pi}{9}\right),\cos\left(\frac{2\pi}{9}\right),\cos\left(\frac{2\pi}{9}\right),\cos\left(\frac{2\pi}{9}\right)\right)=f_{\eta_{(9)}}(\delta).
        \end{aligned}
        $$

        \item[]\textbf{Case 3.3.} $n_6\geq
        10$ and $n_3=n_5=9$. By symmetry property of $\psi$, we may assume that $y_2\geq y_6$. Then \cref{lem:PsiInf1} implies
        $$
        \psi_{\xi}(\delta,y_2,y_3,y_5,y_6)\geq\psi_{\xi}(\delta,y_6,y_3,y_5,y_6).
        $$
        Applying \cref{lem:PsiInf1} once more, we obtain
        $$
        \frac{d(\psi_{\xi}(\delta,y,y_3,y_5,y))}{d y}=\frac{\partial\psi}{\partial y_2}(\delta,y,y_3,y_5,y)+\frac{\partial\psi}{\partial y_6}(\delta,y,y_3,y_5,y)\geq0,
        $$
        which implies that 
        $$
        \begin{aligned}
            \psi_{\xi}(\delta,y_6,y_3,y_5,y_6)\geq&\psi_{\xi}\left(\delta,\cos\left(\frac{2\pi}{10}\right),y_3,y_5,\cos\left(\frac{2\pi}{10}\right)\right)\\
            =&\psi_{\xi}\left(\delta,\cos\left(\frac{2\pi}{10}\right),\cos\left(\frac{2\pi}{9}\right),\cos\left(\frac{2\pi}{9}\right),\cos\left(\frac{2\pi}{10}\right)\right)\\
            \geq&\psi_{\xi}\left(\delta,\cos\left(\frac{2\pi}{9}\right),\cos\left(\frac{2\pi}{9}\right),\cos\left(\frac{2\pi}{9}\right),\cos\left(\frac{2\pi}{9}\right)\right)=f_{\eta_{(9)}}(\delta).
        \end{aligned}
        $$
        \end{enumerate}

        \textbf{Case 4.} $M=1$. By the symmetry property \eqref{eq:psi-Symmetry} of $\psi$, we may assume that $n_2\geq n_6\geq10$. Arguing as in Case 3.3, we obtain
        $$
        \psi_{\xi}(\delta,y_6,y_3,y_5,y_6)\geq\psi_{\xi}\left(\delta,\cos\left(\frac{2\pi}{10}\right),y_3,y_5,\cos\left(\frac{2\pi}{10}\right)\right).
        $$
        If $n_3=9$ and $n_5\geq10$, then it follows from \cref{lem:PsiInf1} and \cref{lem:PsiInf2} that
        $$
        \begin{aligned}
            \psi_{\xi}\left(\delta,\cos\left(\frac{2\pi}{10}\right),y_3,y_5,\cos\left(\frac{2\pi}{10}\right)\right)
            \geq&\psi_{\xi}\left(\delta,\cos\left(\frac{2\pi}{10}\right),y_3,\cos\left(\frac{2\pi}{10}\right),\cos\left(\frac{2\pi}{10}\right)\right)\\
            =&\psi_{\xi}\left(\delta,\cos\left(\frac{2\pi}{10}\right),\cos\left(\frac{2\pi}{9}\right),\cos\left(\frac{2\pi}{10}\right),\cos\left(\frac{2\pi}{10}\right)\right)\\
            \geq&\psi_{\xi}\left(\delta,\cos\left(\frac{2\pi}{9}\right),\cos\left(\frac{2\pi}{9}\right),\cos\left(\frac{2\pi}{9}\right),\cos\left(\frac{2\pi}{9}\right)\right)\\
            =&f_{\eta_{(9)}}(\delta).
        \end{aligned}
        $$
        If $n_5=9$ and $n_3\geq10$, an analogous argument shows that
        $$
        \begin{aligned}
            \psi_{\xi}(\delta,y_2,y_3,y_5,y_6)\geq&\psi_{\xi}\left(\delta,\cos\left(\frac{2\pi}{10}\right),\cos\left(\frac{2\pi}{10}\right),\cos\left(\frac{2\pi}{9}\right),\cos\left(\frac{2\pi}{10}\right)\right)\\
            \geq&\psi_{\xi}\left(\delta,\cos\left(\frac{2\pi}{9}\right),\cos\left(\frac{2\pi}{9}\right),\cos\left(\frac{2\pi}{9}\right),\cos\left(\frac{2\pi}{9}\right)\right)=f_{\eta_{(9)}}(\delta).
        \end{aligned}
        $$
        
        \textbf{Case 5.} $M=0$. By the same argument as in Case 3.3, we obtain 
        $$
        \begin{aligned}
            \psi_{\xi}(\delta,y_6,y_3,y_5,y_6)\geq&\psi_{\xi}\left(\delta,\cos\left(\frac{2\pi}{10}\right),y_3,y_5,\cos\left(\frac{2\pi}{10}\right)\right)\\
            \geq&\psi_{\xi}\left(\delta,\cos\left(\frac{2\pi}{10}\right),\cos\left(\frac{2\pi}{10}\right),\cos\left(\frac{2\pi}{10}\right),\cos\left(\frac{2\pi}{10}\right)\right)\\
            \geq&\psi_{\xi}\left(\delta,\cos\left(\frac{2\pi}{9}\right),\cos\left(\frac{2\pi}{9}\right),\cos\left(\frac{2\pi}{9}\right),\cos\left(\frac{2\pi}{9}\right)\right)=f_{\eta_{(9)}}(\delta).
        \end{aligned}
        $$
      
        Therefore, for all $\delta\in[0,0.13]$,
        $$
        \psi_{\xi}(\delta,y_2,y_3,y_5,y_6)\geq f_{\eta(9)}(\delta),
        $$
        which completes the proof.
\end{proof}

\begin{lemma}\label{lem:Xi-nIncreasing}
     Let $\xi_1=0$ and define $\xi_{n+1}=\eta_{\xi_{n}}(\cos\left(\frac{2\pi}{9}\right))$ inductively for $n\geq1$. Then the sequence $\{\xi_n\}_{n\geq1}$ is increasing with respect to $n$ and
     $$
     0.125\leq\xi_{\infty}:=\lim_{n\to\infty}\xi
    _n\leq0.13.
    $$
\end{lemma}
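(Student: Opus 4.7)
The plan is to establish the three assertions in turn. Strict monotonicity follows by induction from part (2) of \cref{lem:EtaAndBeta}: writing $g(\xi):=\eta_\xi(\cos(2\pi/9))$, since $\eta_\xi(y)$ is strictly increasing in $\xi$, one obtains $\xi_{n+2}=g(\xi_{n+1})>g(\xi_n)=\xi_{n+1}$ once the base case $\xi_2=\eta_0(\cos(2\pi/9))>0=\xi_1$ is noted (and $\xi_2>0$ holds because $\cos(2\pi/9)<1$).

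For the upper bound $\xi_\infty\le 0.13$, I would prove inductively that $\xi_n\le 0.13$. The key characterization is $\eta_\xi(y)\le\delta\iff h_{\xi,y}(\delta)\ge 0$, which holds since $h_{\xi,y}$ is a quadratic in $\delta$ with positive leading coefficient and $h_{\xi,y}(0)=-4(1-y)(1+\xi)^2<0$. Because $h_{\xi,y}(\delta)$ is decreasing in $\xi$, the numerical estimate already carried out in the proof of \cref{lem:EtaAndBeta} shows
\[
h_{\xi,\cos(2\pi/9)}(0.13)\ge h_{0.13,\cos(2\pi/9)}(0.13)>0
\]
for every $\xi\le 0.13$, completing the induction. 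The bounded monotone sequence therefore converges to some $\xi_\infty\in[0,0.13]$, and continuity of $g$ yields the fixed-point equation $g(\xi_\infty)=\xi_\infty$, equivalently $h_{\xi_\infty,c}(\xi_\infty)=0$ with $c:=\cos(2\pi/9)$.

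To obtain $\xi_\infty\ge 0.125$, I would expand $(1+\xi)^2=1+2\xi+\xi^2$ in $h_{\xi,c}(\xi)$ and collect terms, which gives
\[
h_{\xi,c}(\xi)=-p(\xi),\qquad p(\xi):=2\xi^3+(6-5c)\xi^2+(6-18c)\xi+4(1-c),
\]
so $\xi_\infty$ is a root of the cubic $p$. It suffices to show that $p$ has no root in $[0,1/8)$, and for this I would verify $p>0$ on $[0,1/8]$. A direct rational computation gives
\[
p(1/8)=\tfrac{1}{256}(1241-1620\,c),
\]
while $p'(\xi)=6\xi^2+2(6-5c)\xi+(6-18c)$ satisfies $p'(0)=6-18c<0$ and $p'(1/8)=\tfrac{1}{32}(243-616c)<0$, both immediate from $c>1/2$. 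Since $p'$ is a parabola opening upwards, negativity at both endpoints of $[0,1/8]$ propagates to the whole interval, so $p$ is strictly decreasing there. Combined with $p(1/8)>0$, this yields $p>0$ on $[0,1/8]$.

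The delicate point is the inequality $\cos(2\pi/9)<1241/1620$, since the two sides agree to four decimals. The plan is to apply the triple-angle identity $\cos(3\theta)=4\cos^3\theta-3\cos\theta$ at $\theta=2\pi/9$, combined with $\cos(2\pi/3)=-1/2$, to obtain the algebraic relation
\[
8c^3-6c+1=0.
\]
Setting $q(x):=8x^3-6x+1$, a purely rational check shows $8\cdot 1241^3+1620^3>6\cdot 1241\cdot 1620^2$, i.e., $q(1241/1620)>0$; since $q'(x)=24x^2-6>0$ for $x>1/2$ and $c$ is the unique root of $q$ in $(1/2,1)$, this forces $c<1241/1620$. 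Completing this exact integer comparison is the main technical obstacle of the proof.
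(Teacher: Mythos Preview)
Your proof is correct and follows essentially the same route as the paper: induction on monotonicity via \cref{lem:EtaAndBeta}(2), induction on the bound $\xi_n\le 0.13$ via the sign of $h_{\xi,\cos(2\pi/9)}(0.13)$, and identification of $\xi_\infty$ as a root of the cubic $h_{\xi,\cos(2\pi/9)}(\xi)$ together with a monotonicity-plus-sign check at $\xi=0.125$. The only difference is that where the paper simply asserts ``a direct computation shows $h_{\xi,\cos(2\pi/9)}(\xi)<0$ at $\xi=0.125$'', you carry out a fully rigorous verification of the delicate inequality $\cos(2\pi/9)<1241/1620$ by passing to the minimal polynomial $8c^3-6c+1=0$ and reducing to an integer comparison; this is a genuine strengthening in rigor over the paper's numerical assertion.
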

\begin{proof}
    We first show that $\{\xi_n\}_{n\geq1}$ is increasing with respect to $n$.
    Clearly, $\xi_2 \geq \xi_1 = 0$. Suppose that we have proved $\xi_{n}\geq\xi_{n-1}$ for some $n\geq2$. Since $\eta_{\xi}(\cos(\frac{2\pi}{9}))$ is increasing with respect to $\xi\in[0,\infty)$ by \cref{lem:EtaAndBeta}, we obtain
    $$
    \xi_{n+1}=\eta_{\xi_{n}}\left(\cos\left(\frac{2\pi}{9}\right)\right)\geq\eta_{\xi_{n-1}}\left(\cos\left(\frac{2\pi}{9}\right)\right)=\xi_{n}.
    $$
    By induction, $\xi_n\geq\xi_{n-1}$ for all $n\geq 2$.
 
    Next, we show that $\xi_{\infty}:=\lim_{n\to\infty}\xi
    _n\leq0.13$. Suppose that $\xi_n<0.13$ for some $n\geq1$. It follows that 
    $$
    \begin{aligned}
        h_{\xi_{n},\cos\left(\frac{2\pi}{9}\right)}(0.13)
        =&\left(2+\cos\left(\frac{2\pi}{9}\right)\right)\times0.13^2+2\left(2+5\cos\left(\frac{2\pi}{9}\right)-(1+\xi_{n})^2\right)\times0.13\\
        &-4\left(1-\cos\left(\frac{2\pi}{9}\right)\right)(1+\xi_{n})^2>0,
    \end{aligned}
    $$
    which implies $\xi_{n+1}<0.13$. Again by induction, $\xi_n<0.13$ for all $n\geq 1$. 
    
    Since $h_{\xi_{n},\cos(\frac{2\pi}{9})}(\xi_{n+1})=0$ for all $n\geq1$, passing to the limit yields that $\xi_{\infty}\in[0,0.13]$ is the solution of the equation $h_{\xi,\cos\left(\frac{2\pi}{9}\right)}(\xi)=0$, where
    $$
     \begin{aligned}
        h_{\xi,\cos\left(\frac{2\pi}{9}\right)}(\xi)
        =-2\xi^3+\left(5 \cos\left(\frac{2\pi}{9}\right)-6\right)\xi^2+\left(18\cos\left(\frac{2\pi}{9}\right)-6\right)\xi+4\cos\left(\frac{2\pi}{9}\right)-4.
    \end{aligned} 
    $$
     A direct computation shows that the function $h_{\xi,\cos(\frac{2\pi}{9})}(\xi)$ is increasing with respect to $\xi\in[0,0.13]$ and $h_{\xi,\cos\left(\frac{2\pi}{9}\right)}(\xi)<0$ when $\xi=0.125$. It follows that $0.125\leq\xi_{\infty}\leq0.13$, which completes the proof.
\end{proof}

Define
\begin{equation}\label{eq:Mu-nDefinition}
    \mu_n:=\eta_{\xi_{\infty}}\left(\cos\left( \frac{2\pi}{n} \right)\right),\quad\forall n\geq 9.
\end{equation}
Since $h_{\xi_{\infty},\cos(\frac{2\pi}{9})}(\xi_{\infty})=0$, it follows that \begin{equation}\label{eq:Mu9IsXiInfty}
    \mu_9=\xi_{\infty}\geq 0.125.
\end{equation}
We now establish a result on the lower bounds for solutions of the extended Ricci flow.
\begin{theorem}\label{thm:Lowerbound}
    Let $(M, \mathcal{T})$ be a closed pseudo $3$-manifold such that $v(e) \ge 9$ for every edge $e \in E$.
    Let $\{l(t) \mid t \in [0, \infty)\} \subset \mathbb{R}^E_{>0}$ be the solution of the extended Ricci flow with initial data $l^0\in (0, \operatorname{arccosh} 2)^E$ satisfying  
    $$
    l^0_e\in\left(\operatorname{arccosh}(1+\mu_{v(e)}),\operatorname{arccosh}(b_{v(e)})\right)\quad \forall e\in E.
    $$
    If we further assume that there exists $t_0>0$ such that
    $$
    \sup\{l_e(t):0\leq t\leq
    t_0, e\in E\}\leq \operatorname{arccosh} 2,
    $$
    then for all $t\in[0,t_0]$ and all $e\in E$,
    $$
    l_{e}(t) \in \left[\operatorname{arccosh}(1+\mu_{v(e)}), \operatorname{arccosh}b_{v(e)}\right]\subset(0,\operatorname{arccosh}2].
    $$
\end{theorem}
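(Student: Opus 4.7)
My plan is to verify the upper bound first and then establish the lower bound by iterating a first-violation-time barrier argument keyed to the fixed-point sequence $\{\xi_n\}$ of \cref{lem:Xi-nIncreasing}. The upper bound $l_e(t) \leq \operatorname{arccosh} b_{v(e)}$ on $[0, t_0]$ is immediate from \cref{prop:PrioriUpperBound}: the hypothesized initial data $l^{0}_{e}\in(0, \operatorname{arccosh} b_{v(e)})$ together with the standing assumption $\sup_{t,e} l_e(t) \leq \operatorname{arccosh} 2$ are exactly its hypotheses. For the lower bound I would show, by induction on $n \geq 1$, that $\cosh(l_e(t)) \geq 1 + \eta_{\xi_n}(y_{v(e)})$ throughout $[0, t_0]$, where $y_n := \cos(2\pi/n)$, and then let $n \to \infty$. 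Since $\xi_n \uparrow \xi_\infty$ by \cref{lem:Xi-nIncreasing} and $\eta_\xi(y)$ is continuous in $\xi$, passing to the limit yields $\cosh(l_e(t)) \geq 1 + \eta_{\xi_\infty}(y_{v(e)}) = 1 + \mu_{v(e)}$.

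For the base case ($n=1$, $\xi_1 = 0$), no prior lower bound on edge lengths is available, so \cref{rmk:phiandpsi} does not apply. I would instead use only $x_e \in [1, 2]$ (from the upper bound), \cref{lem:phiLessThan} (monotonicity of $\phi$ in $x_2, x_3, x_5, x_6$), and the direct evaluation $\phi(1+\delta, 1, 1, 2, 1, 1) = \frac{2(1-\delta)}{2+\delta}$, together with the key numerical comparison $\eta_0(y) < \frac{2(1-y)}{2+y}$ for every $y \in (0, 1)$; the latter follows from $f_0\!\left(\frac{2(1-y)}{2+y}\right) = \frac{3y}{5-y-y^2} < y$. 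Working at the slightly relaxed threshold $1 + (1-\varepsilon)\eta_0(y_{v(e)})$ for a small $\varepsilon > 0$ and running the barrier argument, the strict comparison yields $\cos\alpha(\hat{e}_1) > \cos(2\pi/v(e_1))$ at the first-violation time, hence $\widetilde{K}_{e_1}(l(t_1)) > 0$ strictly, contradicting $l'_{e_1}(t_1) \leq 0$; letting $\varepsilon \to 0$ finishes the base.

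The inductive step from $n$ to $n+1$ follows the same barrier scheme but at an intermediate parameter $\xi' \in (\xi_n, \xi_{n+1})$, which is the crux of the proof. At a first-violation time with $\cosh(l_{e_1}(t_1)) = 1 + \eta_{\xi'}(y_{v(e_1)})$, the inductive hypothesis supplies $x_e \geq 1 + \eta_{\xi_n}(y_{v(e)})$ and the established upper bound supplies $x_e \leq b_{v(e)} = \beta(y_{v(e)})$, which are precisely the hypotheses of \cref{rmk:phiandpsi} at $\xi = \xi_n$. Combining it with \cref{prop:Inf-all9} gives
\[
\cos\alpha(\hat{e}_1) \;\geq\; \psi_{\xi_n}\bigl(\eta_{\xi'}(y_{v(e_1)}),\, y_{v(e_2)},\, y_{v(e_3)},\, y_{v(e_5)},\, y_{v(e_6)}\bigr) \;\geq\; f_{\xi_{n+1}}\bigl(\eta_{\xi'}(y_{v(e_1)})\bigr),
\]
and the strict inequality $\xi' < \xi_{n+1}$, combined with the strict monotonicity of $\eta_\xi$ in $\xi$ (\cref{lem:EtaAndBeta}(2)) and of $f_{\xi_{n+1}}$ (from \eqref{eq:DecreasingOfFXi}), upgrades this to $\cos\alpha(\hat{e}_1) > f_{\xi_{n+1}}(\eta_{\xi_{n+1}}(y_{v(e_1)})) = \cos(2\pi/v(e_1))$ strictly, so $\widetilde{K}_{e_1}(l(t_1)) > 0$ and the barrier closes; letting $\xi' \to \xi_{n+1}^-$ gives the level-$(n+1)$ bound. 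The main obstacle is precisely this strictness: \cref{prop:Inf-all9} is sharp in the all-valences-$9$ extremal configuration (where $\psi_{\xi_n} = f_{\xi_{n+1}}$), so a direct barrier at level $\xi_{n+1}$ would only produce $\widetilde{K}_{e_1} \geq 0$, which does not contradict $l'_{e_1}(t_1) \leq 0$. The detour through the intermediate $\xi'$ is what converts the strict increase of $\{\xi_n\}$ into the strict positivity of $\widetilde{K}_{e_1}$ needed to close the barrier.
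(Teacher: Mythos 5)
Your proposal is correct and takes essentially the same route as the paper's proof: the upper bound comes from \cref{prop:PrioriUpperBound}, and the lower bound is obtained by induction along the sequence $\{\xi_n\}$ of \cref{lem:Xi-nIncreasing}, applying \cref{rmk:phiandpsi} together with \cref{prop:Inf-all9} in a first-violation barrier argument and passing to the limit $\xi_n\to\xi_\infty$. The only cosmetic deviations are in the base case, where the paper invokes \cref{lem:phiLargerThan} (which yields exactly $f_{0}(\delta)$) instead of your direct evaluation $\phi(1+\delta,1,1,2,1,1)=\tfrac{2(1-\delta)}{2+\delta}$, and in how strictness is secured: the paper anchors the barrier at the strictly violated value $\mu=\cosh(l_{e_1}(t_1))-1<\mu_{v(e_1),k}$ rather than using your $\varepsilon$-relaxed threshold and intermediate parameter $\xi'\in(\xi_n,\xi_{n+1})$, but both devices serve the same purpose.
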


\begin{proof}
    Let $\mu_{n,k}:=\eta_{\xi_{k}}\left(\cos\left( \frac{2\pi}{n} \right)\right)$. By \cref{lem:EtaAndBeta} and \cref{lem:Xi-nIncreasing}, for each $n\in\mathbb{N}$, $\{\mu_{n,k}\}_{k\geq1}$ is increasing with respect to $k$ and $
    \mu_{n}=\lim_{k\to\infty} \mu_{n,k}$. Since by \cref{prop:PrioriUpperBound},
    $$
    l_{e}(t) \in \left(0, \operatorname{arccosh}b_{v(e)}\right],\quad\forall t\in[0,t_0],\ \forall e\in E,
    $$
    it suffices to prove that 
    $$
    l_{e}(t) \in \big[\operatorname{arccosh}\big(1+\mu^{k}_{v(e)}\big), \operatorname{arccosh} 2\big]
    $$
    for all $t\in[0,t_0]$, $e\in E$ and $k\in\mathbb{N}$. We proceed by induction. 

    (i). First, consider the case $k=1$. Suppose, for contradiction, that there exists $t_1\in[0,t_0]$ and an edge $e_1\in E$ such that $l_{e_1}(t_1)<\operatorname{arccosh}(1+\mu_{v(e_1),1})$. Then set $\mu:=\cosh(l_{e_1}(t_1))-1$, we have $\mu<\mu_{v(e_1),1}$. We may assume that 
    $$
    t_1=\inf\{t\in[0,t_0]:\exists
    e\in E,l_{e}(t)=\operatorname{arccosh}(1+\mu)\}.
    $$
    Since $\mu_{v(e)}\geq\mu_{v(e),1}$, we have $t_1>0$.
     It follows from \cref{lem:phiLargerThan} and \eqref{eq:DecreasingOfFXi} that at time $t_1$, for each $\hat{e}\in P^{-1}_E(e_1)$,
    $$
    \cos(\alpha(\hat{e}))\geq\frac{-2\mu^2-2\mu+4}{\mu^2+10\mu+4}=f_{\xi_1}(\mu)>f_{\xi_1}(\mu_{v(e_1),1})=\cos\left(\frac{2\pi}{v(e_1)}\right).
    $$
    Therefore,
    $$
    \widetilde{K}_{e_1}(l(t_1))=2\pi-\sum_{\hat{e}\in P^{-1}_E(e_1)}\alpha(\hat{e})>2\pi-v(e_1)\cdot\frac{2\pi}{v(e_1)}=0.
    $$
    Consequently, 
    $$
    0\geq l'_{e_1}(t_1)=\widetilde{K}_{e_1}(l(t_1))l_{e_1}(t_1)>0,
    $$
    which is a contradiction.

    (ii). Assume that the conclusion holds for some $k\geq1$. We now consider the case $k+1$. Suppose, for contradiction, that there exists $t_1\in[0,t_0]$ and an edge $e_1\in E$ such that $l_{e_1}(t_1)<\operatorname{arccosh}(1+\mu_{v(e_1),k+1})$. Set $\mu:=\cosh(l_{e_1}(t_1))-1$, we have $\mu<\mu_{v(e_1),k+1}$. We may assume that 
    $$
    t_1=\inf\{t\in[0,t_0]:\exists
    e\in E,l_{e}(t)=\operatorname{arccosh}(1+\mu)\}.
    $$
    Since $\mu_{v(e)}\geq\mu_{v(e),k}$ for all $k\in\mathbb{N}$, we have $t_1>0$.
     
    For each $e\in E$, the induction hypothesis yields $x_{e}(t)\in[1+\mu_{v(e),k},b_{v(e)}]$ for all $t\in[0,t_0]$. Let $\hat{e}\in P^{-1}_E(e_1)$ and $\hat{\sigma}$ be a tetrahedron in $T(\mathscr{T})$ such that $\hat{e}\sim\hat{\sigma}$. Choose an edge orientation $\omega=(e_1^{\hat{\sigma}},e_2^{\hat{\sigma}},e_3^{\hat{\sigma}},e_4^{\hat{\sigma}},e_5^{\hat{\sigma}},e_6^{\hat{\sigma}})$ of $\hat{\sigma}$ with $e_1^{\hat{\sigma}}=\hat{e}$.
    It follows from \cref{rmk:phiandpsi}, \cref{prop:Inf-all9} and \eqref{eq:DecreasingOfFXi} that at time $t_1$,
    $$
    \begin{aligned}
    \cos(\alpha(\hat{e}))\geq&\psi_{\xi_k}(\mu,y_2,y_3,y_5,y_6)\\
    \geq&
    f_{\eta_{\xi_k}(\cos\left(\frac{2\pi}{9}\right))}(\mu)=f_{\xi_{k+1}}(\mu)>f_{\xi_{k+1}}(\mu_{v(e_1),k+1})=\cos\left(\frac{2\pi}{v(e_1)}\right),
    \end{aligned}
    $$
    where $y_i=\cos(\frac{2\pi}{n_i})$ and $n_i$ denotes the valence of the edge $P_E(e_i^{\hat{\sigma}})$ for each $i\in\{2,3,5,6\}$.
    Therefore,
    $$
    \widetilde{K}_{e_1}(l(t_1))=2\pi-\sum_{\hat{e}\in P^{-1}_E(e_1)}\alpha(\hat{e})>2\pi-v(e_1)\cdot\frac{2\pi}{v(e_1)}=0.
    $$
    Consequently, 
    $$
    0\geq l'_{e_1}(t_1)=\widetilde{K}_{e_1}(l(t_1))l_{e_1}(t_1)>0,
    $$
    which is a contradiction. The result then follows from induction.
\end{proof}

\section{Upper bounds of length}\label{sec:UpperBounds}
Throughout this section, we assume that $(M, \mathcal{T})$ is a closed pseudo $3$-manifold such that $v(e) \ge 9$ for every edge $e \in E$, and that 
$$
\{l(t) \mid t \in [0, \infty)\} \subset \mathbb{R}^E_{>0}
$$ 
is the unique solution of the extended Ricci flow with initial data $l^0\in (0, \operatorname{arccosh}1.9)^E$ satisfying
$$
l^0_e\in\left(\operatorname{arccosh}(1+\mu_{v(e)}),\operatorname{arccosh}(b_{v(e)})\right)\quad \forall e\in E.
$$
Moreover, fix $t_0>0$ and assume that 
    $$
    \sup\{l_e(t):0\leq t\leq
            t_0, e\in E\}\leq \operatorname{arccosh} 2.
    $$
    
Let $e\in E(\mathscr{T})$ be an edge contained in some tetrahedron $\sigma\in T(\mathscr{T})$. Choose an edge orientation $(e_1^{\sigma},e_2^{\sigma},e_3^{\sigma},e_4^{\sigma},e_5^{\sigma},e_6^{\sigma})$ of $\sigma$ with $e_1^{\sigma}={e}$. Further denote $\check{e}:=P_{E}(e)$ and $\check\sigma:=P_{T}(\sigma)$. For each $i\in\{1,2,3,4,5,6\}$ and $t>0$, let $l_{i}^{\sigma}(t)$ denote the length of the edge $P_{E}(e_i^{\sigma})$ at time $t$ and set $x_i^{\sigma}(t):=\cosh(l_{i}^{\sigma}(t))$. The valence of $e$ is denoted by $v(e):=v(\check{e})$.
Further define
$$
b_M:=\sup\left\{\min\{x_{e}(t),x_{e'}(t)\}:e,e' \text{ are opposite edges in some } \sigma\in T(\mathscr{T}),t\in[0,t_0]\right\}.
$$
We have the following lemmas.

\begin{lemma}\label{lem:dn-Define}
     Fix $n\geq10$. Suppose that $\sigma$ is a tetrahedron in $T(\mathscr{T})$ such that there exists an edge $e\sim\sigma$ with valence $v(e)=n$. Let $\gamma_n>0$ satisfy
     $$
     \phi(\gamma_n, 2, 2, 1,2, 2) \leq \cos\left( \frac{2\pi}{n} \right),
     $$
     that is, $\gamma_n\geq b_n$, where $b_n$ is defined as in \cref{prop:PrioriUpperBound}. Define the function
    $$
    \begin{aligned}
        h_1(x,\gamma_n,b):=&\arccos(\max\{\phi(x,\gamma_n,x,x,2,2),\phi(x,\gamma_n,2,x,2,x)\})\\+&\arccos(\max\{\phi(x,\gamma_n,x,1,2,2),\phi(x,\gamma_n,2,1,2,x)\})\\+&7\arccos(\max\{\phi(x,b,b,1,2,2),\phi(x,b,2,1,2,b)\}),
    \end{aligned}
    $$
    where $b\in[b_M,\infty)$ is fixed.
    Further assume that $d_n\in(1,2)$ satisfies 
    $$
    h_1(d_n,\gamma_n,b)\geq2\pi\quad\text{and}\quad x_e(0)\leq d_n,\ \forall e\in E.
    $$
    Then for all $t\in[0,t_0]$, we have 
    $$
    \max\left\{\min\{x_2^{\sigma}(t),x_5^{\sigma}(t)\},\min\{x_3^{\sigma}(t),x_6^{\sigma}(t)\}\right\}\leq d_n.
    $$
\end{lemma}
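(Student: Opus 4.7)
The plan is to argue by contradiction via a first‑crossing analysis, combined with a term‑by‑term estimate of the cone angle that matches the three $\arccos$‑groups of $h_1$. Let
\[
M(t):=\max\bigl\{\min\{x_2^{\sigma}(t),x_5^{\sigma}(t)\},\min\{x_3^{\sigma}(t),x_6^{\sigma}(t)\}\bigr\}.
\]
Suppose the conclusion fails; by continuity of $l(t)$ and the initial bound $x_i^{\sigma}(0)\le d_n$, there is a first $t_1\in(0,t_0]$ with $M(t_1)=d_n$ and $M(t)>d_n$ at times arbitrarily close to the right of $t_1$. Using the $\phi$-symmetries
\[
\phi(x_1,x_2,x_3,x_4,x_5,x_6)=\phi(x_1,x_3,x_2,x_4,x_6,x_5)=\phi(x_1,x_5,x_6,x_4,x_2,x_3)
\]
(which amount to relabelling the opposite pairs $(e_2,e_5)\leftrightarrow(e_3,e_6)$ or swapping the two entries of such a pair), I reduce to the case $x_5^{\sigma}(t_1)=d_n\le x_2^{\sigma}(t_1)$; then $M(t_1)=d_n$ also forces $\min\{x_3^{\sigma}(t_1),x_6^{\sigma}(t_1)\}\le d_n$. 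Writing $\hat e_5:=P_E(e_5^{\sigma})$, the flow equation $l'=\widetilde K(l)\,l$ together with $l>0$ and the first‑crossing property yield $\widetilde K_{\hat e_5}(l(t_1))\ge 0$, i.e.\ the cone angle at $\hat e_5$ is at most $2\pi$.

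I contradict this by estimating $\sum_{\hat e\in P_E^{-1}(\hat e_5)}\alpha(\hat e)$ from below by $h_1(d_n,\gamma_n,b)\ge 2\pi$. For each preimage, I reorient its tetrahedron with that preimage as the new $e_1$ and upper‑bound $\phi$ using \cref{lem:phiLessThan} (which gives $\partial\phi/\partial x_j\ge 0$ in slots $j\in\{2,3,5,6\}$ on $[1,2]^6$) together with the observation that the coefficient $1-x_1^2\le 0$ in the numerator of \eqref{eq:phi-x1-x6} makes $\phi$ non-increasing in slot $4$. Each slot is substituted by its extremal admissible value using the a priori bound $\cosh l_e\in[1,2]$ on $[0,t_0]$, the inequality $x_{\check e}\le b_n\le\gamma_n$ from \cref{prop:PrioriUpperBound}, and the definition of $b_M\le b$ (so every opposite pair has at least one edge of cosh‑length $\le b$).

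The $n_5:=v(\hat e_5)\ge 9$ bounds split into three groups matching the three $\arccos$-terms of $h_1$: \emph{(i)} From $\sigma$ itself: slot $1=d_n$, slot $4\ge d_n$ (since the opposite of $e_5^{\sigma}$ is $e_2^{\sigma}$), a $\gamma_n$‑slot coming from $e_1^{\sigma}$, plus the auxiliary $d_n$‑bound from $\min\{x_3^{\sigma},x_6^{\sigma}\}\le d_n$; the two $\phi$‑expressions inside the first $\arccos$ arise as the worst case over whether $x_3^{\sigma}\le d_n$ or $x_6^{\sigma}\le d_n$, after collapsing via the $(23)(56)$‑ and $(25)(36)$‑symmetries. \emph{(ii)} From the unique tetrahedron $\sigma^{*}$ glued to $\sigma$ across the face carrying $e_1^{\sigma}$ and $e_5^{\sigma}$: this $\sigma^{*}$ contains an edge mapping to $\check e$ (contributing the $\gamma_n$‑slot), but its opposite‑to‑target edge carries only the trivial bound $\ge 1$, giving the second $\arccos$. \emph{(iii)} For each of the remaining $n_5-2\ge 7$ preimages: only $\cosh l\in[1,2]$ and the $b$‑bound on opposite pairs are available, yielding seven copies of the third $\arccos$. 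Summing, the cone angle at $\hat e_5$ is $\ge h_1(d_n,\gamma_n,b)\ge 2\pi$. The analogous bound at $\hat e_2$ (in the boundary case $x_2^{\sigma}(t_1)=d_n$) together with a standard barrier argument of the same type used in \cref{prop:PrioriUpperBound} then shows $M$ cannot strictly exceed $d_n$ just after $t_1$, contradicting the choice of $t_1$; the symmetric case $\min\{x_3^{\sigma},x_6^{\sigma}\}=d_n$ is handled identically after relabelling.

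\textbf{Main obstacle.} The principal difficulty is combinatorial bookkeeping: locating the tetrahedron $\sigma^{*}$ in the pseudo‑manifold $\mathcal T$, verifying $\sigma^{*}\ne\sigma$ so that it contributes a genuinely distinct preimage of $\hat e_5$, and matching each of the four reorientations of $\sigma^{*}$ with $e_5^{\sigma^{*}}$ as the new $e_1$ to exactly one of the two $\phi$‑expressions inside the second $\arccos$, via the $(23)(56)$‑ and $(25)(36)$‑symmetries of $\phi$. The analytic input—monotonicity of $\phi$ and the $b_M$‑bound—is already supplied by the preceding lemmas; the work lies in this algebraic matching.
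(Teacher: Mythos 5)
Your plan follows the same general route as the paper (a first-crossing barrier argument plus a term-by-term angle estimate matching the three $\arccos$-groups of $h_1$), but group \emph{(ii)} has a genuine gap. The second group of $h_1$, namely $\arccos(\max\{\phi(x,\gamma_n,x,1,2,2),\phi(x,\gamma_n,2,1,2,x)\})$, puts the value $x=d_n$ into slot $3$ or slot $6$; to realize it for the glued tetrahedron $\sigma^{*}$ you must know that the opposite pair of $\sigma^{*}$ containing neither the target edge nor the valence-$n$ edge already satisfies $\min\{x_j^{\sigma^{*}}(t_1),x_{j+3}^{\sigma^{*}}(t_1)\}\le d_n$. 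The information you actually have for $\sigma^{*}$ ($\le\gamma_n$ in slot $2$ from \cref{prop:PrioriUpperBound}, slot $4\ge 1$, all other slots only $\le 2$, or $\le b$ from $b_M$) gives at best $\phi\le\phi(d_n,\gamma_n,2,1,2,2)$ or a $b$-version, which is strictly weaker than the second group of $h_1$ (since $b\ge b_M$ is typically larger than $d_n$); and asserting the missing $d_n$-bound for $\sigma^{*}$ is exactly the conclusion of the lemma applied to $\sigma^{*}$, i.e.\ circular in your fixed-$\sigma$ setup. The paper resolves this by running the barrier simultaneously over \emph{all} tetrahedra incident to an edge of valence $n$: it defines $D_\sigma(t)$ for every $\sigma\in T_n$ and works with the first time the global quantity $\sup_{\sigma\in T_n}D_\sigma$ reaches a fixed level, so at that time the same bound is available for the neighboring tetrahedron as for the extremal one. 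By contrast, the point you single out as the main obstacle, verifying $\sigma^{*}\ne\sigma$, is not essential: what matters is only that the two contributions come from distinct preimage edges in $E(\mathscr{T})$, which is automatic since $v\ge 9$.

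A secondary problem is that you work at the exact level $d_n$: the angle estimate then only yields cone angle $\ge 2\pi$, i.e.\ $\widetilde K_{\hat e_5}(l(t_1))\le 0$, while the first-crossing property gives at best $\widetilde K_{\hat e_5}(l(t_1))\ge 0$ (and even that only if the crossing is driven by the pair $\{x_2,x_5\}$; if it is driven by $\{x_3,x_6\}$ you get no sign information at $\hat e_5$). Equality is possible, so there is no contradiction, and ``$M$ cannot strictly exceed $d_n$ just after $t_1$'' does not follow from a vanishing derivative at $t_1$. The paper avoids this by assuming the supremum exceeds $d_n$, fixing an attained value $a>d_n$, taking the first time the supremum equals $a$, selecting a coordinate attaining $a$ with nonnegative derivative, and using the strict monotonicity of $h_1$ in $x$ (from \cref{lem:10StrictlyDecreasingFct}) to get cone angle $\ge h_1(a,\gamma_n,b)>h_1(d_n,\gamma_n,b)\ge 2\pi$ strictly. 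This second issue is repairable along the lines you gesture at (the level-raising trick of \cref{prop:PrioriUpperBound}); the missing neighbor bound in group \emph{(ii)} is the substantive gap and forces the global formulation over all of $T_n$.
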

\begin{proof}
    
    First, we note that the function $h_1(x,\gamma_n,b)$ is strictly increasing in $x$. This follows from the fact that each $\phi$–term in the definition of $h_1$ is strictly decreasing with respect to $x$ by \cref{lem:10StrictlyDecreasingFct}, and consequently each corresponding $\arccos$ term is strictly increasing.
    
    Let $\Omega_{\sigma}^n$ denote the set of edge orientations $\omega=(e_1^{\sigma},e_2^{\sigma},e_3^{\sigma},e_4^{\sigma},e_5^{\sigma},e_6^{\sigma})$ of $\sigma$ such that $v(e_4^{\sigma})=n$. Clearly, $\Omega_{\sigma}^n\neq\varnothing$ if and only if there exists an edge $e\sim\sigma$ such that $v(e)=n$.
    Define
    $$
    T_n:=\{\sigma\in T(\mathscr{T}):\Omega_{\sigma}^n\neq\varnothing 
    \}.
    $$
    For each $\sigma\in T_n$, define 
    $$
      D_{\sigma}(t):=\max_{\omega\in \Omega_{\sigma}^n}\max\big\{\min\{x_2^{\sigma}(t),x_5^{\sigma}(t)\},\min\{x_3^{\sigma}(t),x_6^{\sigma}(t)\}\big\}.
    $$
    Note that for any edge orientation $\omega$ of $\sigma$, we always assume that $\omega$ is of the form $(e_1^{\sigma},e_2^{\sigma},e_3^{\sigma},e_4^{\sigma},e_5^{\sigma},e_6^{\sigma})$.
    We aim to prove
    $$
    \sup\{ D_{\sigma}(t):\sigma\in T_n,t\in[0,t_0]\}\leq d_n.
    $$
    
    Suppose, for contradiction, that there exists $t_1\in[0,t_0]$ such that
    $$
    a:=\sup_{\sigma\in T_n} D_{\sigma}(t_1)> d_n.
    $$
    Without loss of generality, we may assume that 
    $$
    t_1=\inf\Big\{t\in[0,t_0]:\sup_{\sigma\in T_n} D_{\sigma}(t)=a\Big\}.
    $$
    Since $x_e(0)\leq d_n$ for each $e\in E$, it follows that $t_1>0$. Then there exist $\sigma_1\in T_n$, $\omega_1=(e_1^{\sigma_1},e_2^{\sigma_1},e_3^{\sigma_1},e_4^{\sigma_1},e_5^{\sigma_1},e_6^{\sigma_1})\in\Omega_{\sigma_1}^n$ and $i\in\{2,3,5,6\}$ such that
    $$
    x_i^{\sigma_1}(t_1)=D_{\sigma_1}(t_1)=a\quad\text{and}\quad \frac{d x_i^{\sigma_1}}{dt}(t_1)\geq0,
    $$
    where $x_i^{\sigma_1}(t)$ is the cosh-length of the edge $e_{i}^{\sigma_1}$ at time $t$.

    Observe that there exists a tetrahedron $\sigma_2\neq\sigma_1$ in $T_n$ such that 
    $$
    P_E(e_4^{\sigma_1})\sim P_T(\sigma_2) \quad\text{and}\quad P_E(e_i^{\sigma_1})\sim P_T(\sigma_2).
    $$
    Let $\omega_2=(e_1^{\sigma_2},e_2^{\sigma_2},e_3^{\sigma_2},e_4^{\sigma_2},e_5^{\sigma_2},e_6^{\sigma_2})$ be an edge orientation of $\sigma_2$ such that 
    $$
    P_E(e_4^{\sigma_2})=P_E(e_4^{\sigma_1}) \quad\text{and}\quad P_E(e_i^{\sigma_2})=P_E(e_i^{\sigma_1}).
    $$
    Then there exists a unique number $j\in\{2,3,5,6\}$ such that $(e_i^{\sigma_k},e_4^{\sigma_k},e_j^{\sigma_k},e_{i+3}^{\sigma_k},e_1^{\sigma_k},e_{j+3}^{\sigma_k})$ form an edge orientation for $\sigma_k$, $k=1,2$. Since $x_4^{\sigma_1}(t_1)=x_4^{\sigma_2}(t_1)\leq b_n \leq\gamma_n$ by \cref{prop:PrioriUpperBound} and $x_{i+3}^{\sigma_1}(t_1)\geq a$ by the choice of $i$, at time $t_1$ we have
    \begin{equation}\label{eq:h1One}
        \begin{aligned}
       \alpha(e_i^{\sigma_1})+\alpha(e_i^{\sigma_2})\geq&\arccos(\phi(x_i^{\sigma_1}(t_1),\gamma_n,x_{j}^{\sigma_1}(t_1),a,2,x_{j+3}^{\sigma_1}(t_1)))\\&+\arccos(\phi(x_i^{\sigma_1}(t_1),\gamma_n,x_{j}^{\sigma_2}(t_1),1,2,x_{j+3}^{\sigma_2}(t_1)))\\
        \geq&\arccos\big(\max\{\phi(x_i^{\sigma_1}(t_1),\gamma_n,a,a,2,2),\phi(x_i^{\sigma_1}(t_1),\gamma_n,2,a,2,a)\}\big)\\
        &+\arccos\big(\max\{\phi(x_i^{\sigma_1}(t_1),\gamma_n,a,1,2,2),\phi(x_i^{\sigma_1}(t_1),\gamma_n,2,1,2,a)\}\big)\\
        =&\arccos(\max\{\phi(a,\gamma_n,a,a,2,2),\phi(a,\gamma_n,2,a,2,a)\})\\
        &+\arccos(\max\{\phi(a,\gamma_n,a,1,2,2),\phi(a,\gamma_n,2,1,2,a)\}),
        \end{aligned}
    \end{equation}
    Here, the first inequality follows from \cref{lem:phiLessThan} together with the monotonicity of $\phi(x)$ with respect to $x_4$. The second inequality holds because 
    $$
    \min\{x_{j}^{\sigma_k}(t_1),x_{j+3}^{\sigma_k}(t_1)\}\leq a,\quad k\in\{1,2\}.
    $$

    Suppose that $\sigma_3$ is a tetrahedron in $T(\mathscr{T})$ such that there exists $e_0\sim\sigma_3$ satisfying $P_E(e_0)=P_E(e_i^{\sigma_1})$. Note that for each pair of opposite edges $\{e,e'\}$ in $\sigma_3$,
    $$
    \min\{x_{e}(t),x_{e'}(t)\}\leq b_M\leq b.
    $$ 
    It then follows from \cref{lem:phiLessThan} that
    \begin{equation}\label{eq:h1Two}
        \begin{aligned}
        \alpha(e_0)\geq&\arccos(\max\{\phi(x_i^{\sigma_1}(t_1),b,b,1,2,2),\phi(x_i^{\sigma_1}(t_1),b,2,1,2,b)\})\\
        =& \arccos(\max\{\phi(a,b,b,1,2,2),\phi(a,b,2,1,2,b)\}).
    \end{aligned}
    \end{equation}
    
    Set $\check{e}:=P_E(e_i^{\sigma_1})\in E$. Combining \eqref{eq:h1One} and \eqref{eq:h1Two}, we obtain
    $$
    \begin{aligned}
        \sum_{e\in P_E^{-1}(\check{e})}\alpha(e)\geq \alpha(e_i^{\sigma_1})+\alpha(e_i^{\sigma_2})+7\min_{e\in P_E^{-1}(\check{e})}\alpha(e)\geq h_1(a,\gamma_n,b)>h_1(d_n,\gamma_n,b)\geq 2\pi
    \end{aligned}
    $$ 
    Consequently,
    $$
    K_{\check{e}}(l(t_1))=2\pi-\sum_{e\in P_E^{-1}(\check{e})}\alpha(e)<0.
    $$
    This implies
    $$
    0\leq\frac{d x_i^{\sigma_1}}{dt}(t_1)=\sinh(l_{\check{e}}(t_1)) \frac{d l_{\check{e}}}{dt}(t_1)=\sinh(l_{\check{e}}(t_1))l_{\check{e}}(t_1)K_{\check{e}}(l(t_1))<0,
    $$
    which is a contradiction.
\end{proof}

\begin{lemma}\label{lem: phi_n_delta_b_Define}
    Fix $n\geq10$ and define
    $$
    \phi_{d_n,\delta_{n-1},c}(x):=\max\left\{
    \begin{aligned}&\phi(x,d_n,d_n,1,2,2),\phi(x,d_n,2,1,2,d_n),\\
    &\phi(x,c,c,1+\delta_{n-1},2,2),\phi(x,c,2,1+\delta_{n-1},2,c)
    \end{aligned}\right\}.
    $$
    Here, $d_n\in(1,2)$ satisfies the condition in \cref{lem:dn-Define}, $c\in(1,2)$, and $0<\delta_{n-1}\leq\mu_{n-1}$ where $\mu_n=\eta_{\xi_{\infty}}\left(\cos( \frac{2\pi}{n} )\right)$ is defined as in the previous section. Suppose that $\sigma$ is a tetrahedron in $T(\mathscr{T})$ and for each pair $\{e,e'\}$ of opposite edges in $\sigma$,
    $$
    \min\{x_{e}(t),x_{e'}(t)\}\leq c,\quad\forall t\in [0,t_0].
    $$ 
    Then, for each edge $e_1\sim \sigma$, the dihedral angle $\alpha(e_1)$ of $\sigma$ at $e_1$ satisfies 
    $$
    \cos\bigl(\alpha(e_1)\bigr)\leq\phi_{d_n,\delta_{n-1},c}(x_{e_1}(t)),\quad \forall t\in[0,t_0].
    $$
\end{lemma}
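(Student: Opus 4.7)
The plan is to fix $e_1 \sim \sigma$ and choose an edge orientation $(e_1, e_2, e_3, e_4, e_5, e_6)$ of $\sigma$; writing $x_i = x_{e_i}(t)$, one has $\cos(\alpha(e_1)) = \phi(x_1, x_2, x_3, x_4, x_5, x_6)$. The standing assumptions of this section guarantee $x_i \in [1, 2]$ throughout $[0, t_0]$, so \cref{lem:phiLessThan} yields that $\phi$ is nondecreasing in each of $x_2, x_3, x_5, x_6$. Inspecting \eqref{eq:phi-x1-x6}, the $x_4$-coefficient in the numerator equals $1 - x_1^2 \leq 0$, and the denominator is independent of $x_4$, so $\phi$ is nonincreasing in $x_4$ as well. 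These two monotonicity facts drive the entire argument, which I would split on the valence $v(e_4)$ of the edge opposite to $e_1$.

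If $v(e_4) \geq n$, I would invoke \cref{lem:dn-Define} with its parameter $n$ replaced by the actual valence $v(e_4)$ and the orientation $(e_1, \ldots, e_6)$ viewed as an element of $\Omega_\sigma^{v(e_4)}$, yielding $\min\{x_2, x_5\} \leq d_{v(e_4)}$ and $\min\{x_3, x_6\} \leq d_{v(e_4)}$. A short check from the defining inequality $h_1(d_n, \gamma_n, b) \geq 2\pi$---using that $h_1$ is increasing in its first argument, that $\gamma_n$ can be taken to decrease with $n$ (since $\gamma_n \geq b_n$ and $b_n$ is decreasing in $n$), and that $h_1$ is decreasing in $\gamma_n$---shows $d_n$ can be chosen nonincreasing in $n$, so $d_{v(e_4)} \leq d_n$. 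Using the $\phi$-symmetry that simultaneously swaps $(x_2, x_3) \leftrightarrow (x_5, x_6)$, I may assume $x_2 \leq x_5$, hence $x_2 \leq d_n$; then either $x_3 \leq x_6$, giving $x_3 \leq d_n$, or $x_6 \leq x_3$, giving $x_6 \leq d_n$. In each sub-case, monotonicity of $\phi$ together with the trivial bound $x_4 \geq 1$ and the global bounds $x_j \leq 2$ yields $\phi \leq \phi(x_1, d_n, d_n, 1, 2, 2)$ or $\phi \leq \phi(x_1, d_n, 2, 1, 2, d_n)$, recovering the first two terms in the max defining $\phi_{d_n, \delta_{n-1}, c}$.

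If instead $v(e_4) \leq n - 1$, then since $\mu_k = \eta_{\xi_\infty}(\cos(2\pi/k))$ is decreasing in $k$ (combining the monotonicity statements in \cref{lem:EtaAndBeta}: $\cos(2\pi/k)$ is increasing in $k$ and $\eta_{\xi_\infty}$ is decreasing in its argument), one has $\mu_{v(e_4)} \geq \mu_{n-1} \geq \delta_{n-1}$, and \cref{thm:Lowerbound} then forces $x_4 \geq 1 + \mu_{v(e_4)} \geq 1 + \delta_{n-1}$. The hypothesis $\min\{x_2, x_5\} \leq c$ and $\min\{x_3, x_6\} \leq c$, combined with the same symmetry-plus-monotonicity argument, produces one of $\phi \leq \phi(x_1, c, c, 1 + \delta_{n-1}, 2, 2)$ or $\phi \leq \phi(x_1, c, 2, 1 + \delta_{n-1}, 2, c)$, namely the last two terms in the max. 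Taking the maximum over both cases gives the claim. The main obstacle I anticipate is the careful verification that $d_n$ is decreasing in $n$, so that the application of \cref{lem:dn-Define} at valence $v(e_4) > n$ can be ``transported'' to the uniform bound $d_n$ needed in Case A; the remaining technical care lies in ensuring the WLOG symmetry reductions are consistent with the opposite-pair hypothesis, which is automatic because this hypothesis is preserved by the permutations of $\{2,3,5,6\}$ underlying the $\phi$-symmetries.
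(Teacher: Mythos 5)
Your proposal is correct and takes essentially the same route as the paper: split on whether the opposite edge $e_4$ has valence $\geq n$ (apply \cref{lem:dn-Define} to bound two of $x_2,x_3,x_5,x_6$ by $d_n$ and use the trivial bound $x_4\geq 1$) or $\leq n-1$ (use \cref{thm:Lowerbound} with the monotonicity of $\mu_k$ in $k$ to get $x_4\geq 1+\delta_{n-1}$, and the opposite-pair hypothesis to bound by $c$), then conclude via the monotonicity of $\phi$ in $x_2,x_3,x_5,x_6$ (nondecreasing, \cref{lem:phiLessThan}) and in $x_4$ (nonincreasing). Your extra check that \cref{lem:dn-Define} transports to valences $v(e_4)>n$ is a detail the paper leaves implicit; it can be done even more directly by observing that the same pair $(\gamma_n,d_n)$ already satisfies the hypotheses of \cref{lem:dn-Define} at any valence $n'\geq n$, since $\phi(\gamma_n,2,2,1,2,2)\leq\cos\left(\frac{2\pi}{n}\right)\leq\cos\left(\frac{2\pi}{n'}\right)$, rather than arranging $d_n$ to be nonincreasing in $n$.
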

\begin{proof}
    Fix $e_1$ and let $\omega=(e_1,e_2,e_3,e_4,e_5,e_6)$ be an edge orientation of $\sigma$. We distinguish two cases.
    
    \textbf{Case 1.} The edge $e_4$ opposite to $e_1$ in $\sigma$ has valence at least $n$. By \cref{lem:dn-Define},
    $$
    \max\{\min\{x_2(t),x_5(t)\},\min\{x_3(t),x_6(t)\}\}\leq d_n,\quad \forall t\in[0,t_0].
    $$
    After reordering the edge orientation if necessary, we may assume that either $x_2(t),x_3(t)\leq d_n$ or $x_2(t),x_6(t)\leq d_n$. In this case,
    $$
    \cos\bigl(\alpha(e_1)\bigr)
    \leq
    \max\{\phi(x_1,d_n,d_n,1,2,2),\phi(x_1,d_n,2,1,2,d_n)\}.
    $$
    
    \textbf{Case 2.} The edge $e_4$ has valence strictly less than $n$.
    Then $x_4\geq 1+\delta_{n-1}$ by \cref{thm:Lowerbound}, and consequently,
    $$
    \cos\bigl(\alpha(e_1)\bigr)
    \leq
    \max\{\phi(x_1,c,c,1+\delta_{n-1},2,2),
    \phi(x_1,c,2,1+\delta_{n-1},2,c)\}.
    $$
    Combining the two cases yields the desired estimate.
\end{proof}

\begin{lemma}\label{lem: q_Define}
Fix $m\geq10$ and $b\in[b_M,\infty)$. Define
$$
\begin{aligned}
   h_2(x,d_{m},\delta_{m-1}):=&\arccos(\max\{\phi(x,x,x,x,2,2),\phi(x,x,2,x,2,x)\})\\
   +&8\arccos(\phi_{d_{m},\delta_{m-1},x}(x)),
\end{aligned}
$$
where $d_{m}$ satisfies the condition in \cref{lem:dn-Define} and $\delta_{m-1}\in(0,\mu_{m-1})$. Suppose that $q\in(1.9,2)$ satisfies $h_2(q,d_{m},\delta_{m-1})\geq2\pi$. Then for each tetrahedron $\sigma\in T(\mathscr{T})$ and each pair $\{e,e'\}$ of opposite edges in $\sigma$,
    $$
    \min\{x_{e}(t),x_{e'}(t)\}\leq q,\quad\forall t\in [0,t_0].
    $$
\end{lemma}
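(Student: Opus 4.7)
\emph{Proof proposal.} The plan is to mimic the contradiction argument from \cref{lem:dn-Define}, now applied to the global quantity
$$A(t):=\max\Big\{\min\{x_e(t),x_{e'}(t)\}:\{e,e'\}\text{ a pair of opposite edges in some }\sigma\in T(\mathscr{T})\Big\}.$$
The standing hypothesis $l^0_e\in(0,\operatorname{arccosh}1.9)$ yields $A(0)<1.9<q$; by continuity of $A$, if the conclusion fails then
$$a:=\sup_{t\in[0,t_0]}A(t)>q,$$
and this sup is attained at a first time $t_1\in(0,t_0]$. Thus there exist $\sigma_1\in T(\mathscr{T})$ and an opposite pair $\{e_1,e_1'\}$ in $\sigma_1$ with $x_{e_1}(t_1)=a\leq x_{e_1'}(t_1)$. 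A short left-limit argument---resolving the tie case $x_{e_1}(t_1)=x_{e_1'}(t_1)$ via pigeonhole on the two smooth derivatives, since otherwise both being negative would force $\min\{x_{e_1},x_{e_1'}\}>a$ just before $t_1$, contradicting $A<a$ there---shows $\tfrac{d}{dt}x_{e_1}(t_1)\geq 0$.

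The heart of the proof is an overestimate of the cone angle at the class $\check e_1:=P_E(e_1)\in E$, of valence $n\geq 9$. For the angle at $e_1$ inside $\sigma_1$ itself, I choose the edge orientation with $e_1$ first and $e_1'=e_4$; then $x_1=a$, $x_4\geq a$, the four remaining entries lie in $[1,2]$, and by the definition of $a$ each of the opposite pairs $(e_2,e_5),(e_3,e_6)$ inside $\sigma_1$ contains an entry $\leq a$. Monotonicity of $\phi$ in $x_4$ (strictly decreasing, immediate from \eqref{eq:phi-x1-x6} since $x_1=a>1$) and in $x_2,x_3,x_5,x_6$ (strictly increasing on $[1,2]$ by \cref{lem:phiLessThan}), combined with the two symmetries $\phi(x_1,x_2,x_3,x_4,x_5,x_6)=\phi(x_1,x_3,x_2,x_4,x_6,x_5)=\phi(x_1,x_5,x_6,x_4,x_2,x_3)$ to collapse four subcases into two, yields
$$\cos\alpha(e_1)\leq\max\bigl\{\phi(a,a,a,a,2,2),\ \phi(a,a,2,a,2,a)\bigr\}.$$
For each of the other $n-1\geq 8$ edges $e\in P_E^{-1}(\check e_1)\setminus\{e_1\}$, the tetrahedron $\sigma'\ni e$ satisfies $\min\{x,x'\}\leq a$ on every opposite-edge pair (again by the definition of $a$), and $x_e(t_1)=a$ since $P_E(e)=\check e_1$; hence \cref{lem: phi_n_delta_b_Define} applied with $c=a$ gives $\cos\alpha(e)\leq\phi_{d_m,\delta_{m-1},a}(a)$.

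Summing the corresponding $\arccos$ values, and using $n-1\geq 8$ together with the non-negativity of $\arccos$, yields
$$\sum_{e\in P_E^{-1}(\check e_1)}\alpha(e)\geq h_2(a,d_m,\delta_{m-1}).$$
Every $\phi$-factor appearing in $h_2(\cdot,d_m,\delta_{m-1})$ is an instance of $j_9,j_{10},j_1,j_2,j_7,j_8$ from \cref{lem:10StrictlyDecreasingFct} (with appropriate constants), each strictly decreasing in $x$; hence $h_2(\cdot,d_m,\delta_{m-1})$ is strictly increasing. Combined with $a>q$ and the hypothesis $h_2(q,d_m,\delta_{m-1})\geq 2\pi$ we deduce $\sum_e\alpha(e)>2\pi$, so $K_{\check e_1}(l(t_1))<0$, and therefore
$$\tfrac{d}{dt}x_{e_1}(t_1)=\sinh\bigl(l_{\check e_1}(t_1)\bigr)\,l_{\check e_1}(t_1)\,K_{\check e_1}(l(t_1))<0,$$
contradicting $\tfrac{d}{dt}x_{e_1}(t_1)\geq 0$. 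The main obstacle is not the contradiction skeleton---which parallels that of \cref{lem:dn-Define}---but rather the careful case bookkeeping collapsing the four possible subcases for $\alpha(e_1)$ into the two $\phi$-terms above, and the clean identification of every summand of $h_2$ with the correct entry of \cref{lem:10StrictlyDecreasingFct} needed to convert the assumed non-strict inequality $h_2(q,\cdot,\cdot)\geq 2\pi$ into the strict inequality $h_2(a,\cdot,\cdot)>2\pi$ that drives the contradiction.
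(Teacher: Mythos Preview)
Your proposal is correct and follows essentially the same route as the paper: the same contradiction skeleton on the global opposite-pair minimum, the same two-term bound for $\alpha(e_1)$ inside $\sigma_1$, the same appeal to \cref{lem: phi_n_delta_b_Define} with $c=a$ for the remaining $\ge 8$ angles, and the same strict monotonicity of $h_2$ via \cref{lem:10StrictlyDecreasingFct}. Your choice to set $a:=\sup_{t\in[0,t_0]}A(t)$ (rather than first fixing a bad time and then redefining $t_1$, as the paper does) is in fact cleaner, since it immediately guarantees the uniform-in-$t$ hypothesis $\min\{x_e(t),x_{e'}(t)\}\le a$ needed to invoke \cref{lem: phi_n_delta_b_Define}; and your explicit treatment of the tie case $x_{e_1}=x_{e_1'}$ for the derivative sign is a detail the paper glosses over.
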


\begin{proof}
    First, we observe that the function $h_2(x,d_m,\delta_{m-1})$ is strictly increasing with respect to $x$. Indeed, each $\phi$–term appearing in the definition of $h_2$ is strictly decreasing in $x$ by
    \cref{lem:10StrictlyDecreasingFct}, and hence each corresponding $\arccos$ term is strictly increasing.
    For each $\sigma\in T(\mathscr{T})$, let $(e_1^{\sigma},e_2^{\sigma},e_3^{\sigma},e_4^{\sigma},e_5^{\sigma},e_6^{\sigma})$ be an edge orientation of $\sigma$ and define 
    $$
      H_{\sigma}(t):=\max\big\{\min\{x_i^{\sigma}(t),x_{i+3}^{\sigma}(t)\}:i=1,2,3\big\}.
    $$
    We need to show
    $$
    \sup\{ H_{\sigma}(t):\sigma\in T(\mathscr{T}),t\in[0,t_0]\}\leq q.
    $$
    
    Suppose, for contradiction, that there exists $t_1\in[0,t_0]$ such that
    $$
    a:=\sup_{\sigma\in T(\mathscr{T})} H_{\sigma}(t_1)> q.
    $$
    Without loss of generality, we may assume that 
    $$
    t_1=\inf\Big\{t\in[0,t_0]:\sup_{\sigma\in T(\mathscr{T})} H_{\sigma}(t)=a\Big\}.
    $$
    Since $x_e(0)\leq1.9<q$ for each edge $e\in E$, it follows that $t_1>0$. Then there exists $\sigma_1\in T(\mathscr{T})$ and $i\in\{1,2,3,4,5,6\}$ such that
    $$
    x_i^{\sigma_1}(t_1)=H_{\sigma_1}(t_1)=a\quad\text{and}\quad \frac{d x_i^{\sigma_1}}{dt}(t_1)\geq0,
    $$
    where $x_i^{\sigma_1}(t)$ is the cosh-length of the edge $e_i^{\sigma_1}$ at time $t$. After reordering the edge orientation if necessary, we may assume that $i=1$. Then
    \begin{equation}\label{eq:h2One}
        \begin{aligned}
       \alpha(e_1^{\sigma_1})\geq&\arccos\big(\phi\big(x_1^{\sigma_1}(t_1),x_2^{\sigma_1}(t_1),x_{3}^{\sigma_1}(t_1),a,x_5^{\sigma_1}(t_1),x_6^{\sigma_1}(t_1)\big)\big)\\
        \geq&\arccos\big(\max\{\phi(x_1^{\sigma_1}(t_1),a,a,a,2,2),\phi(x_1^{\sigma_1}(t_1),a,2,a,2,a)\}\big)\\
        =&\arccos(\max\{\phi(a,a,a,a,2,2),\phi(a,a,2,a,2,a)\}),
        \end{aligned}
    \end{equation}
    where the first inequality follows from the monotonicity of $\phi(x)$ with respect to $x_4$ and the second inequality holds because 
    $$
    \min\{x_{j}^{\sigma_1}(t_1),x_{j+3}^{\sigma_1}(t_1)\}\leq a,\quad j\in\{1,2,3\}.
    $$

    Now, let $\sigma_2$ be a tetrahedron in $T(\mathscr{T})$ such that there exists $e_0\sim\sigma_2$ satisfying $P_E(e_0)=P_E(e_1^{\sigma_1})$. Note that for each pair of opposite edges $\{e,e'\}$ in $\sigma_2$,
    $$
    \min\{x_{e}(t),x_{e'}(t)\}\leq a,\quad\forall t\in [0,t_0].
    $$ 
    It then follows from \cref{lem: phi_n_delta_b_Define} that
    \begin{equation}\label{eq:h2Two}
        \begin{aligned}
        \alpha(e_0)\geq\arccos(\phi_{d_m,\delta_{m-1},a}(x_{e_0}(t_1)))= \arccos(\phi_{d_m,\delta_{m-1},a}(a)).
    \end{aligned}
    \end{equation}
    Set $\check{e}:=P_E(e_i^{\sigma_1})\in E$. Combining \eqref{eq:h2One} and \eqref{eq:h2Two}, we obtain
    $$
    \begin{aligned}
        \sum_{e\in P_E^{-1}(\check{e})}\alpha(e)\geq \alpha(e_1^{\sigma_1})+8\min_{e\in P_E^{-1}(\check{e})}\alpha(e)\geq h_2(a,d_m,\delta_{m-1})>h_2(q,d_m,\delta_{m-1})\geq 2\pi.
    \end{aligned}
    $$  
    Consequently,
    $$
    K_{\check{e}}(l(t_1))=2\pi-\sum_{e\in P_E^{-1}(\check{e})}\alpha(e)<0.
    $$
    This implies
    $$
    0\leq\frac{d x_1^{\sigma_1}}{dt}(t_1)=\sinh(l_{\check{e}}(t_1)) \frac{d l_{\check{e}}}{dt}(t_1)=\sinh(l_{\check{e}}(t_1))l_{\check{e}}(t_1)K_{\check{e}}(l(t_1))<0,
    $$
    which is a contradiction.
\end{proof}

\begin{theorem}\label{thm:UpperBound}
    Let $(M, \mathcal{T})$ be a closed pseudo $3$-manifold such that $v(e) \ge 9$ for every edge $e \in E$. 
    Let $\{l(t) \mid t \in [0, \infty)\} \subset \mathbb{R}^E_{>0}$ be the solution of the extended Ricci flow with initial data $l^0\in (0, \operatorname{arccosh} 1.9)^E$ satisfying 
    $$
    l^0_e\in\left(\operatorname{arccosh}(1+\mu_{v(e)}),\operatorname{arccosh}(b_{v(e)})\right)\quad\forall e\in E.
    $$
    Moreover, assume that there exists $t_0>0$ such that
    $$
    \sup\{l_e(t):0\leq t\leq
    t_0, e\in E\}\leq \operatorname{arccosh} 2.
    $$
    Then, for each tetrahedron $\sigma\in T(\mathscr{T})$ and each pair $\{e,e'\}$ of opposite edges in $\sigma$, one has
    $$
    \min\{x_{e}(t),x_{e'}(t)\}\leq 1.98,\quad\forall t\in [0,t_0].
    $$
\end{theorem}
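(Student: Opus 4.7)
The strategy is to invoke \cref{lem: q_Define} directly with $q = 1.98$. Since the lemma's hypothesis is purely algebraic once $q$ is fixed — namely $q\in(1.9,2)$ and $h_2(q,d_m,\delta_{m-1})\ge 2\pi$ for some admissible $m\ge 10$, $d_m\in(1,2)$, and $\delta_{m-1}\in(0,\mu_{m-1}]$ — the entire task reduces to producing an explicit parameter triple $(m,d_m,\delta_{m-1})$ for which that inequality holds.

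First I would set the scale using the a priori bounds already established. \cref{prop:PrioriUpperBound} (applicable because $x_e(0)<1.9<b_{v(e)}$ for all $e$) gives $x_e(t)\le b_{v(e)}<2$ on $[0,t_0]$, hence $b_M\le 2$; this allows the choice $b=2$ in \cref{lem:dn-Define}. Moreover, the initial condition $l^0_e<\operatorname{arccosh} 1.9$ translates to $x_e(0)<1.9$, so the auxiliary hypothesis $x_i^{\sigma}(0)\le d_n$ in \cref{lem:dn-Define} is automatic as soon as $d_n\ge 1.9$. For the lower side, \cref{thm:Lowerbound} (whose upper assumption $l_e(t)\le\operatorname{arccosh} 2$ is exactly our standing hypothesis) yields $x_e(t)\ge 1+\mu_{v(e)}$, and since $\mu_n$ is decreasing in $n$ by \eqref{eq:Mu-nDefinition} and \cref{lem:EtaAndBeta}, any edge of valence strictly less than $m$ satisfies $x_e(t)\ge 1+\mu_{m-1}$. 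Consequently $\delta_{m-1}:=\mu_{m-1}$ is an admissible choice in \cref{lem: phi_n_delta_b_Define}.

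Next, fix the valence parameter $m$ (the natural first try being $m=10$) and take $\gamma_m:=2\ge b_m$. By \cref{lem:10StrictlyDecreasingFct}, each $\phi$-term building $h_1$ is strictly decreasing in its first slot, so $h_1(x,2,2)$ is strictly increasing and continuous in $x$; using that the $\phi$-arguments at $x=2$ push $\arccos$ toward large angles, one locates an explicit $d_m\in[1.9,2)$ with $h_1(d_m,2,2)\ge 2\pi$, thereby activating \cref{lem:dn-Define}. The remaining — and the only substantial — step is the numerical verification
\[
h_2(1.98,d_m,\mu_{m-1})\;\ge\;2\pi,
\]
which amounts to evaluating nine $\arccos$ expressions at specific algebraic points and summing, exactly in the spirit of the estimates in \cref{lem:PsiInf1} and \cref{lem:PsiInf2}.

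The main obstacle is precisely this last numerical check: the target $q=1.98$ leaves only a thin margin relative to the trivial bound $2$, so the choice $b=2$ in \cref{lem:dn-Define} may yield a $d_m$ too close to $2$ for $h_2(1.98,d_m,\mu_{m-1})\ge 2\pi$ to hold in a single pass. The remedy is to bootstrap: any intermediate bound $b_M\le q'<2$ produced by \cref{lem: q_Define} can be fed back as the new value of $b$ in \cref{lem:dn-Define}, which, by the monotonicity of $\phi$ in the third and sixth slots (\cref{lem:phiLessThan}), strictly decreases the admissible $d_m$ and thereby increases $h_2$. A finite cascade of such refinements — each shrinking the gap $2-q$ — eventually validates $h_2(1.98,d_m,\mu_{m-1})\ge 2\pi$, and \cref{lem: q_Define} then delivers $\min\{x_e(t),x_{e'}(t)\}\le 1.98$ for all $t\in[0,t_0]$, proving the theorem.
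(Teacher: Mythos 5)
Your overall skeleton matches the paper's: fix parameters, use \cref{lem:dn-Define} to produce $d_m$, then apply \cref{lem: q_Define}, and bootstrap the opposite-edge bound $b$ (the paper does exactly two passes, $b^{(1)}=2\to q^{(1)}=1.981$, then $b^{(2)}=1.981\to q^{(2)}=1.98$). But there is a genuine gap in the parameter choices, which is where the entire content of this theorem lives. First, your "first try" $\gamma_m=2$, $b=2$ cannot even get started: since $\phi(2,2,2,2,2,2)=\tfrac{2}{3}$ and $\phi(2,2,2,1,2,2)=\tfrac{7}{9}$, one has
$$
h_1(2,2,2)=\arccos\left(\tfrac{2}{3}\right)+8\arccos\left(\tfrac{7}{9}\right)\approx 0.8411+5.4376=6.279<2\pi\approx 6.2832,
$$
and $h_1$ is increasing in $x$, so \emph{no} $d_m\in(1,2)$ with $h_1(d_m,2,2)\ge 2\pi$ exists; your claim that one "locates an explicit $d_m\in[1.9,2)$" is false. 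Taking $\gamma_m=2$ throws away precisely the leverage that \cref{lem:dn-Define} is designed to exploit, namely $x_4\le b_m<2$ for a valence-$m$ opposite edge. Since no $d_m$ is produced, \cref{lem: q_Define} yields no initial $q'<2$, and your bootstrap (which only lowers $b$, never $\gamma_m$) never launches.

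Second, even after repairing this by taking $\gamma_{10}=b_{10}\approx 1.845$, the choice $m=10$ is a poor trade-off: $h_1(x,1.845,2)$ exceeds $2\pi$ only marginally near $x=2$, forcing $d_{10}$ extremely close to $2$; then in $h_2$ the max in $\phi_{d_{10},\delta_9,x}$ is dominated by the $d$-terms $\phi(x,d_{10},d_{10},1,2,2)$, and a direct estimate gives $h_2(1.98,d_{10},0.125)$ well below $2\pi$, with the first pass delivering only $q^{(1)}\approx 1.998$. Iterating in $b$ improves only the third block of $h_1$ and appears to stall near $1.99$, so your assertion that "a finite cascade of such refinements eventually validates $h_2(1.98,d_m,\mu_{m-1})\ge 2\pi$" is unsubstantiated — you give no argument that the fixed point of the cascade lies at or below $1.98$ rather than above it. The paper's proof resolves exactly this tension by choosing $m=18$, so that $\gamma_{18}=1.2488$ is much smaller (at the cost of the smaller but still explicit $\delta_{17}=0.0314$), which makes $d_{18}\approx 1.95$ achievable and lets two explicit numerically verified passes ($d^{(1)}_{18}=1.9526$, $q^{(1)}=1.9810$; $d^{(2)}_{18}=1.9458$, $q^{(2)}=1.98$) close the argument. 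A further small point: \cref{lem: q_Define} requires $\delta_{m-1}\in(0,\mu_{m-1})$ and $\mu_{m-1}$ is not explicitly computable, so one should use an explicit lower bound such as $\eta_{0.125}\bigl(\cos\tfrac{2\pi}{m-1}\bigr)$ rather than $\delta_{m-1}:=\mu_{m-1}$ itself. As it stands, the decisive numerical inequalities — the only nontrivial content of the theorem — are not established by your proposal.
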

\begin{proof}
    Fix $m=18$, $\gamma_{m} = 1.2488\geq b_m$ and 
    $$
    \delta_{m-1}= 0.0314\leq\eta_{0.125}\left(\cos\left(\frac{2\pi}{m-1}\right)\right)\leq \mu_{m-1}.
    $$
    Let 
    $$
    b^{(1)}:=2,\quad d_m^{(1)}=1.9526,\quad q^{(1)}:=1.9810.
    $$ 
    Clearly, $b_M\leq b^{(1)}$. After numerical computation, we verify that 
    $$
    h_1\big(d_m^{(1)},\gamma_m,b^{(1)}\big)\geq2\pi\quad \text{and}\quad h_2\big(q^{(1)},d_m^{(1)},\delta_{m-1}\big)\geq2\pi.
    $$
    It follows from \cref{lem: q_Define} that for each tetrahedron $\sigma\in T(\mathscr{T})$ and each pair $\{e,e'\}$ of opposite edges in $\sigma$,
    $$
    \min\{x_{e}(t),x_{e'}(t)\}\leq q^{(1)},\quad\forall t\in [0,t_0].
    $$

    Next, define 
    $$
    b^{(2)}:=q^{(1)},\quad d_m^{(2)}=1.9458,\quad q^{(2)}:=1.9800.
    $$
    By the above argument, we have $b_M\leq b^{(2)}$. Numerical computation shows that 
    $$
    h_1\big(d_m^{(2)},\gamma_m,b^{(2)}\big)\geq2\pi\quad \text{and}\quad h_2\big(q^{(2)},d_m^{(2)},\delta_{m-1}\big)\geq2\pi.
    $$
    It follows again by \cref{lem: q_Define} that for each tetrahedron $\sigma\in T(\mathscr{T})$ and each pair $\{e,e'\}$ of opposite edges in $\sigma$,
    $$
    \min\{x_{e}(t),x_{e'}(t)\}\leq q^{(2)}=1.98,\quad\forall t\in [0,t_0],
    $$
    which completes the proof.
\end{proof}

It follows from the above theorem that $b_M\leq1.98$. Therefore, we may fix $b=1.98$ throughout the remainder of this section. Then, $d_{18}=1.9454$ satisfies the condition in \cref{lem:dn-Define} for $b=1.98$ and $\gamma_{18}=1.2488$. The following lemma plays a crucial role in the proof of \cref{thm:Begining}.

\begin{lemma}\label{lem:PnAndQn}
    Fix the parameters 
    $$
    b=1.98,\quad \gamma_{18}=1.2488,\quad \delta_{17}=0.0314,\quad \delta_9=0.125,\quad d_{18}=1.9454.
    $$
    Suppose that $\sigma$ is a tetrahedron in $T(\mathscr{T})$ and $\omega=(e_1^{\sigma},e_2^{\sigma},e_3^{\sigma},e_4^{\sigma},e_5^{\sigma},e_6^{\sigma})$ is an edge orientation of $\sigma$. Assume that the edge $e_4^{\sigma}$ opposite to $e_1^{\sigma}$ in $\sigma$ has valence $v(e_4^{\sigma}) = n \ge 9$.
    \begin{enumerate}
        \item[\rm(1).] Define
        $$
        \begin{aligned}
            h_3(x,\gamma_n):=&\arccos(\max\{\phi(x,\gamma_n,x,x,2,2),\phi(x,\gamma_n,2,x,2,x)\})\\+&\arccos(\max\{\phi(x,\gamma_n,x,1,2,2),\phi(x,\gamma_n,2,1,2,x)\})\\+&7\arccos(\phi_{d_{18},\delta_{17},b}(x)).
        \end{aligned}
        $$
        If $q_n\in(1.9,2)$ satisfies $h_3(q_n,\gamma_n)\geq2\pi$ for some $\gamma_n\in[b_n,2]$, then for all $t\in[0,t_0]$,
        $$
        \max\big\{\min\{x_2^{\sigma}(t),x_5^{\sigma}(t)\},\min\{x_3^{\sigma}(t),x_6^{\sigma}(t)\}\big\}\leq q_n.
        $$
        \item[\rm(2).] Further assume that all the other five edges $e_i^{\sigma}$ $(i\neq4)$ in $\sigma$ have valence $9$. Define
        $$
        \begin{aligned}
            h_4(x,\gamma_n,d_n):=&\arccos(\max\{\phi(x,\gamma_n,d_n,1+\delta_9,2,x),\phi(x,\gamma_n,x,1+\delta_9,2,d_n)\})\\
            +&\arccos(\max\{\phi(x,\gamma_n,d_n,1,2,2),\phi(x,\gamma_n,2,1,2,d_n)\})\\
           +&7\arccos(\phi_{d_{18},\delta_{17},b}(x)),
        \end{aligned}
        $$ 
        where $d_n$ satisfies the condition in \cref{lem:dn-Define} for $n\geq10$ and $d_n=2$ for $n=9$.
        If there exist $\gamma_n\in[b_n,2]$ and $p_n\in(1.9,2]$ such that
        $$
        \text{either}\quad h_4(p_n,\gamma_n,d_n)\geq2\pi\quad \text{or}\quad p_n=2,
        $$
        then for all $t\in[0,t_0]$,
        $$
        \max\{x_2^{\sigma}(t),x_3^{\sigma}(t),x_5^{\sigma}(t),x_6^{\sigma}(t)\}\leq p_n.
        $$
    \end{enumerate}
\end{lemma}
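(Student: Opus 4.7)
The plan is to prove both parts by a maximum principle argument that parallels the proofs of \cref{lem:dn-Define} and \cref{lem: q_Define}. Fix $\sigma$ as in the statement and argue by contradiction. For Part (1), set
$$a := \sup_{t \in [0, t_0]} \max\big\{\min\{x_2^\sigma(t), x_5^\sigma(t)\}, \min\{x_3^\sigma(t), x_6^\sigma(t)\}\big\}$$
and suppose $a > q_n$; let $t_1$ be the infimum of times at which this supremum is attained. Since the hypothesis $l^0 \in (0, \operatorname{arccosh} 1.9)^E$ forces $x_e(0) < 1.9 < q_n$, we have $t_1 > 0$, and at $t_1$ there is an index $i \in \{2, 3, 5, 6\}$ with $x_i^\sigma(t_1) = a$, $x_{i+3}^\sigma(t_1) \geq a$ (from the $\min$ structure), and $\frac{d}{dt} x_i^\sigma(t_1) \geq 0$. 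For Part (2) one instead takes $a := \sup_t \max\{x_j^\sigma(t) : j \in \{2, 3, 5, 6\}\}$ and supposes $a > p_n$ (the sub-case $p_n = 2$ being vacuous since $\sup\{l_e(t) : t \leq t_0, e \in E\} \leq \operatorname{arccosh} 2$ gives $x_e \leq 2$), producing the same $x_i^\sigma(t_1) = a$ together with the upper bound $x_j^\sigma(t_1) \leq a$ for every $j \in \{2, 3, 5, 6\}$.

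In both cases one bounds $\sum_{e \in P_E^{-1}(\check e)} \alpha(e)$ from below, where $\check e := P_E(e_i^\sigma)$ has valence $9$; the nine contributing edges split into three groups: (i) $\sigma_1 := \sigma$ itself, (ii) a tetrahedron $\sigma_2 \neq \sigma_1$ chosen exactly as in the proof of \cref{lem:dn-Define}, containing edges in $P_E^{-1}(P_E(e_i^\sigma))$ and in $P_E^{-1}(P_E(e_4^\sigma))$ — in particular $\sigma_2 \in T_n$ because it shares the valence-$n$ edge — and (iii) the remaining seven tetrahedra, for which \cref{thm:UpperBound} gives $b_M \leq 1.98 = b$ so that \cref{lem: phi_n_delta_b_Define} applies and contributes the term $7 \arccos(\phi_{d_{18}, \delta_{17}, b}(a))$ common to both $h_3$ and $h_4$.

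The core work is estimating the angles contributed by $\sigma_1$ and $\sigma_2$. In each I reorder the edge orientation so that $e_i$ becomes the new $e_1$ and $e_{i+3}$ the new $e_4$; then the monotonicity of $\phi$ in $x_2, x_3, x_5, x_6$ (\cref{lem:phiLessThan}) together with its decreasing dependence on $x_4$ (read off directly from \eqref{eq:phi-x1-x6}) converts the available constraints into upper bounds on $\phi$. In Part (1), the inputs $x_4^\sigma \leq \gamma_n$, $x_{i+3}^\sigma \geq a$ (for $\sigma_1$) or only $x_{i+3}^{\sigma_2} \geq 1$ (for $\sigma_2$), combined with the fact that the non-$i$ opposite pair in each tetrahedron has $\min \leq a$ by the supremum definition, produce the first two $\arccos\max\phi$ terms of $h_3$; the two alternatives inside each $\max$ correspond to which member of the non-$i$ opposite pair is actually $\leq a$. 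In Part (2), the valence-$9$ hypothesis on $e_{i+3}^\sigma$ upgrades its lower bound via \cref{thm:Lowerbound} to $x_{i+3}^\sigma \geq 1 + \delta_9$ inside $\sigma_1$, while \cref{lem:dn-Define} applied to $\sigma \in T_n$ (resp.\ $\sigma_2 \in T_n$) sharpens the non-$i$ opposite-pair bound from $\leq a$ to $\leq d_n$ — these are exactly the $1 + \delta_9$ and $d_n$ arguments in the first term of $h_4$, while the second term records the analogous estimate in $\sigma_2$ using only the generic lower bound $x_{i+3}^{\sigma_2} \geq 1$.

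Each $\arccos\phi$ term is strictly increasing in $x$ by \cref{lem:10StrictlyDecreasingFct}, so summing the three groups gives a total dihedral-angle sum of at least $h_3(a, \gamma_n) > h_3(q_n, \gamma_n) \geq 2\pi$ in Part (1), respectively $h_4(a, \gamma_n, d_n) > h_4(p_n, \gamma_n, d_n) \geq 2\pi$ in Part (2). Hence $K_{\check e}(l(t_1)) < 0$, which forces
$$\frac{d}{dt} x_i^\sigma(t_1) = \sinh(l_{\check e}(t_1))\, l_{\check e}(t_1)\, K_{\check e}(l(t_1)) < 0,$$
contradicting $\frac{d}{dt} x_i^\sigma(t_1) \geq 0$. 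The main technical nuisance I expect is the bookkeeping in the reordering step — in particular identifying which of the two non-$i$ opposite pairs of $\sigma_1$ (or $\sigma_2$) contains the valence-$n$ edge $e_4^\sigma$, since this is what produces the two alternatives inside each $\max$ in $h_3$ and $h_4$ — but this is purely symbolic and follows the template already established in \cref{lem:dn-Define}.
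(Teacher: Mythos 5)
Your outline of Part (2) is essentially the paper's proof: the supremum over $[0,t_0]$ of $\max\{x_j^{\sigma}:j\in\{2,3,5,6\}\}$ for the single fixed $\sigma$, the choice of $\sigma_2$ through the shared valence-$n$ edge, the lower bound $x_{i+3}^{\sigma}\geq 1+\delta_9$ from \cref{thm:Lowerbound}, the $d_n$-bound on the non-$i$ opposite pairs from \cref{lem:dn-Define}, and the $7\arccos(\phi_{d_{18},\delta_{17},b}(a))$ term from \cref{thm:UpperBound} and \cref{lem: phi_n_delta_b_Define} all match; there the restriction of the supremum to the fixed tetrahedron is harmless precisely because every pair-bound fed into $h_4$ comes from $d_n$, $\delta_9$, $\gamma_n$ or the global bound $2$, not from $a$ itself.

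Part (1), however, has a genuine gap. You define $a$ as the supremum over time of $\max\{\min\{x_2^{\sigma},x_5^{\sigma}\},\min\{x_3^{\sigma},x_6^{\sigma}\}\}$ for the \emph{single} fixed tetrahedron $\sigma$, and then claim that "the non-$i$ opposite pair in each tetrahedron has $\min\leq a$ by the supremum definition." That is true for $\sigma_1=\sigma$, but the edges $e_j^{\sigma_2},e_{j+3}^{\sigma_2}$ are not edges of $\sigma$, so your definition of $a$ gives no control on them; the only available bound is $\min\{x_{e},x_{e'}\}\leq b_M\leq 1.98$ from \cref{thm:UpperBound}, which is weaker than $a$ whenever $a<1.98$ (note $q_n<1.98$ for every value of $q_n$ actually used, e.g.\ $q_n=1.9194$, so $a$ may well lie below $1.98$). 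Hence the $\sigma_2$ contribution you can justify is only $\arccos\bigl(\max\{\phi(a,\gamma_n,1.98,1,2,2),\phi(a,\gamma_n,2,1,2,1.98)\}\bigr)$, not the second term of $h_3(a,\gamma_n)$, and the chain $\sum_{e\in P_E^{-1}(\check e)}\alpha(e)\geq h_3(a,\gamma_n)>h_3(q_n,\gamma_n)\geq 2\pi$ breaks down. The paper avoids this by running the maximum principle globally over the class $T_n$ of all tetrahedra containing a valence-$n$ edge: it sets $a:=\sup_{\sigma'\in T_n}D_{\sigma'}(t_1)$, so that $\sigma_2\in T_n$ automatically satisfies $D_{\sigma_2}(t_1)\leq a$, which is exactly the missing bound. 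The repair is to prove the stronger statement $\sup\{D_{\sigma'}(t):\sigma'\in T_n,\ t\in[0,t_0]\}\leq q_n$, exactly as in \cref{lem:dn-Define}, and then specialize to your fixed $\sigma\in T_n$; with that modification the rest of your argument goes through.
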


\begin{proof}
    Note that the functions $h_3(x,\gamma_n)$ and $h_4(x,\gamma_n,d_n)$ are both strictly increasing with respect to $x$ since each $\phi$–term appearing in the definition of $h_3$ and $h_4$ is strictly decreasing in $x$ by \cref{lem:10StrictlyDecreasingFct}. Let $\Omega_{\sigma}^n$, $T_n$ and $D_{\sigma}(t)$ be defined as in the proof of \cref{lem:dn-Define}.
    
    (1). 
    It suffices to prove
    $$
    \sup\{ D_{\sigma}(t):\sigma\in T_n,t\in[0,t_0]\}\leq q_n.
    $$
    Suppose, for contradiction, that there exists $t_1\in[0,t_0]$ such that
    $$
    a:=\sup_{\sigma\in T_n} D_{\sigma}(t_1)> q_n.
    $$
    Without loss of generality, we may assume that 
    $$
    t_1=\inf\Big\{t\in[0,t_0]:\sup_{\sigma\in T_n} D_{\sigma}(t)=a\Big\}.
    $$
    Since $\max_{e \in E} x_e(0) \le 1.9 < q_n$, it follows that $t_1 > 0$.
    Then there exist $\sigma_1\in T_n$, $\omega_1\in\Omega_{\sigma_1}^n$ and $i\in\{2,3,5,6\}$ such that
    $$
    x_i^{\sigma_1}(t_1)=\max\big\{\min\{x_2^{\sigma_1}(t_1),x_5^{\sigma_1}(t_1)\},\min\{x_3^{\sigma_1}(t_1),x_6^{\sigma_1}(t_1)\}\big\}=D_{\sigma_1}(t_1)=a
    $$
    and $\frac{d x_i^{\sigma_1}}{dt}(t_1)\geq0$.

    Observe that there exists a tetrahedron $\sigma_2\neq\sigma_1$ in $T_n$ such that $P_E(e_4^{\sigma_1})\sim P_T(\sigma_2)$ and $P_E(e_i)^{\sigma_1}\sim P_T(\sigma_2)$.   
    Let $\omega_2\in\Omega_{\sigma_2}^n$ be an edge orientation of $\sigma_2$ such that $P_E(e_4^{\sigma_2})=P_E(e_4^{\sigma_1})$ and $P_E(e_i^{\sigma_2})=P_E(e_i^{\sigma_1})$. By the same argument as in the proof of \cref{lem:dn-Define}, we obtain
    \begin{equation}\label{eq:h3One}
        \begin{aligned}
       \alpha(e_i^{\sigma_1})+\alpha(e_i^{\sigma_2})\geq&\arccos(\max\{\phi(a,\gamma_n,a,a,2,2),\phi(a,\gamma_n,2,a,2,a)\})\\
        &+\arccos(\max\{\phi(a,\gamma_n,a,1,2,2),\phi(a,\gamma_n,2,1,2,a)\}),
        \end{aligned}
    \end{equation}
    Now let $\sigma_3$ be one of the other $7$ tetrahedra in $T(\mathscr{T})$ for which there exists $e_0\sim\sigma_3$ satisfying $P_E(e_0)=P_E(e_i^{\sigma_1})$.
    It then follows from \cref{lem: phi_n_delta_b_Define} that
    \begin{equation}\label{eq:h3Two}
        \begin{aligned}
        \alpha(e_0)\geq\arccos(\phi_{d_{18},\delta_{17},b}(x_{e_0}(t_1)))=\arccos(\phi_{d_{18},\delta_{17},b}(a)).
    \end{aligned}
    \end{equation}
    Set $\check{e}:=P_E(e_i^{\sigma_1})\in E$. Combining \eqref{eq:h3One} and \eqref{eq:h3Two}, we obtain
    $$
    \begin{aligned}
        \sum_{e\in P_E^{-1}(\check{e})}\alpha(e)\geq \alpha(e_i^{\sigma_1})+\alpha(e_i^{\sigma_2})+7\min_{e\in P_E^{-1}(\check{e})}\alpha(e)\geq h_3(a,\gamma_n)>h_3(q_n,\gamma_n)\geq 2\pi
    \end{aligned}
    $$ 
    Consequently,
    $$
    K_{\check{e}}(l(t_1))=2\pi-\sum_{e\in P_E^{-1}(\check{e})}\alpha(e)<0.
    $$
    This implies
    $$
    0\leq\frac{d x_i^{\sigma_1}}{dt}(t_1)=\sinh(l_{\check{e}}(t_1)) \frac{d l_{\check{e}}}{dt}(t_1)=\sinh(l_{\check{e}}(t_1))l_{\check{e}}(t_1)K_{\check{e}}(l(t_1))<0,
    $$
    which is a contradiction.
    
    (2). Let $\sigma_1$ satisfies the condition in (2). Define  
    $$
    G_{\sigma_1}(t):=\max\{x_2^{\sigma_1}(t),x_3^{\sigma_1}(t),x_5^{\sigma_1}(t),x_6^{\sigma_1}(t)\}.
    $$
    It suffices to prove
    $$
    \sup\{ G_{\sigma_1}(t):t\in[0,t_0]\}\leq q_n.
    $$
    Suppose, for contradiction, that there exists $t_1\in[0,t_0]$ such that
    $$
    a:=G_{\sigma_1}(t_1)> q_n.
    $$
    Without loss of generality, we may assume that 
    $$
    t_1=\inf\Big\{t\in[0,t_0]: G_{\sigma_1}(t)=a\Big\}.
    $$
    Since $\max_{e \in E} x_e(0) \le 1.9 < q_n$, it follows that $t_1 > 0$.
    Then there exist $i\in\{2,3,5,6\}$ such that
    $$
    x_i^{\sigma_1}(t_1)=G_{\sigma_1}(t_1)=a\quad \text{and}\quad\frac{d x_i^{\sigma_1}}{dt}(t_1)\geq0.
    $$

    Observe that there exists a tetrahedron $\sigma_2\neq\sigma_1$ in $T_n$ such that $P_E(e_4^{\sigma_1})\sim P_T(\sigma_2)$ and $P_E(e_i)^{\sigma_1}\sim P_T(\sigma_2)$. Let $\omega_2=(e_1^{\sigma_2},e_2^{\sigma_2},e_3^{\sigma_2},e_4^{\sigma_2},e_5^{\sigma_2},e_6^{\sigma_2})$ be an edge orientation of $\sigma_2$ such that $P_E(e_4^{\sigma_2})=P_E(e_4^{\sigma_1})$ and $P_E(e_i^{\sigma_2})=P_E(e_i^{\sigma_1})$. Then there exists a unique number $j\in\{2,3,5,6\}$ such that $(e_i^{\sigma_k},e_4^{\sigma_k},e_j^{\sigma_k},e_{i+3}^{\sigma_k},e_1^{\sigma_k},e_{j+3}^{\sigma_k})$ forms an edge orientation for $\sigma_k$, $k=1,2$.
    By \cref{thm:Lowerbound} and \eqref{eq:Mu9IsXiInfty}, we have $$
    x_{i+3}^{\sigma_1}(t_1)\geq 1+\mu_9\geq 1+\delta_9.
    $$
    Thus, at time $t_1$,
    \begin{equation}\label{eq:h4One}
         \begin{aligned}
        \alpha(e_i^{\sigma_1})+\alpha(e_i^{\sigma_2})\geq&\arccos(\phi(x_i^{\sigma_1}(t_1),x_4^{\sigma_1}(t_1),x_{j}^{\sigma_1}(t_1),1+\delta_9,2,x_{j+3}^{\sigma_1}(t_1)))
        \\&+\arccos(\phi(x_i^{\sigma_1}(t_1),x_4^{\sigma_2}(t_1),x_{j}^{\sigma_2}(t_1),1,2,x_{j+3}^{\sigma_2}(t_1)))\\
        \geq&\arccos(\phi(x_i^{\sigma_1}(t_1),\gamma_n,x_{j}^{\sigma_1}(t_1),1+\delta_9,2,x_{j+3}^{\sigma_1}(t_1)))
        \\&+\arccos(\phi(x_i^{\sigma_1}(t_1),\gamma_n,x_{j}^{\sigma_2}(t_1),1,2,x_{j+3}^{\sigma_2}(t_1)))\\
        \geq&\arccos(\max\{\phi(x_i^{\sigma_1}(t_1),\gamma_n,d_n,1+\delta_9,2,a),\phi(x_i^{\sigma_1}(t_1),\gamma_n,a,1+\delta_9,2,d_n)\})\\
        &+\arccos(\max\{\phi(x_i^{\sigma_1}(t_1),\gamma_n,d_n,1,2,2),\phi(x_i^{\sigma_1}(t_1),\gamma_n,2,1,2,d_n)\})\\
        =&\arccos(\max\{\phi(a,\gamma_n,d_n,1+\delta_9,2,a),\phi(a,\gamma_n,a,1+\delta_9,2,d_n)\})\\
        &+\arccos(\max\{\phi(a,\gamma_n,d_n,1,2,2),\phi(a,\gamma_n,2,1,2,d_n)\})
        \end{aligned}
    \end{equation}
    Here, the first inequality follows from \cref{lem:phiLessThan} together with the monotonicity of $\phi(x)$ with respect to $x_4$. The second inequality holds because $x_4^{\sigma_2}(t_1)=x_4^{\sigma_1}(t_1)\leq b_n\leq \gamma_n$ by \cref{prop:PrioriUpperBound} and the third equality follows since $\min\{x_j^{\sigma_k}(t_1),x_{j+3}^{\sigma_k}(t_1)\}\leq d_n$ for $k=1,2$ by \cref{prop:PrioriUpperBound}.
    
    Set $\check{e}:=P_E(e_i^{\sigma_1})\in E$. Combining \eqref{eq:h4One} and \eqref{eq:h3Two}, we obtain
    $$
    \begin{aligned}
        \sum_{e\in P_E^{-1}(\check{e})}\alpha(e)\geq \alpha(e_i^{\sigma_1})+\alpha(e_i^{\sigma_2})+7\min_{e\in P_E^{-1}(\check{e})}\alpha(e)\geq h_4(a,\gamma_n,d_n)>h_4(p_n,\gamma_n,d_n)\geq 2\pi
    \end{aligned}
    $$ 
    Consequently,
    $$
    K_{\check{e}}(l(t_1))=2\pi-\sum_{e\in P_E^{-1}(\check{e})}\alpha(e)<0.
    $$
    This implies
    $$
    0\leq\frac{d x_i^{\sigma_1}}{dt}(t_1)=\sinh(l_{\check{e}}(t_1)) \frac{d l_{\check{e}}}{dt}(t_1)=\sinh(l_{\check{e}}(t_1))l_{\check{e}}(t_1)K_{\check{e}}(l(t_1))<0,
    $$
    which is a contradiction.
\end{proof}

    Under the assumptions of \cref{thm:UpperBound}, the following proposition holds.
    \begin{proposition}\label{prop:NonNineDegreeNeighbor}
     Let $\sigma$ be a tetrahedron in $T(\mathscr{T})$ and $(e_1^{\sigma},e_2^{\sigma},e_3^{\sigma},e_{4}^{\sigma},e_5^{\sigma},e_{6}^{\sigma})$ be an edge orientation of $\sigma$. Assume that $v({e_1^{\sigma}})=9$, $v({e_4^{\sigma}})=n\geq 9$ and there exists $i\in\{2,3,5,6\}$ such that the edge $e_i^{\sigma}\in E$ has valence $v(e_i^{\sigma})=k\geq10$. If 
     $$
     x_1^{\sigma}(t_0)=\sup\{x_e(t):e \in E,\ t\in[0,t_0]\}=2,
     $$
     then the dihedral angle of $e_1^{\sigma}$ at $t_0$ satisfies
     $$
        \alpha(e_1^{\sigma})> \frac{2\pi}{9}.
     $$
    
    \end{proposition}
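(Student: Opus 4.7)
The plan is to estimate $\cos\alpha(e_1^\sigma) = \phi\bigl(x_1^\sigma(t_0),\dots,x_6^\sigma(t_0)\bigr)$ directly from \eqref{eq:phi-x1-x6} and show that it lies strictly below $\cos(2\pi/9)$. Write $x_j := x_j^\sigma(t_0)$. By the symmetries of $\phi$ recorded just after \eqref{eq:phi-x1-x6}, the stabilizer of the pair $\{e_1^\sigma, e_4^\sigma\}$ acts transitively on $\{e_2^\sigma, e_3^\sigma, e_5^\sigma, e_6^\sigma\}$, so after relabeling we may assume $i = 2$, and hence $x_2 \le b_k \le b_{10}$ by \cref{prop:PrioriUpperBound}.

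The remaining bounds available at time $t_0$ are: $x_1 = 2$; $x_3, x_5, x_6 \le 2$ from the sup hypothesis; $x_4 \in [1+\mu_n, b_n]$ by \cref{thm:Lowerbound} and \cref{prop:PrioriUpperBound}; $\min\{x_3, x_6\} \le 1.98$ from \cref{thm:UpperBound}, so after a further relabeling using the symmetry of $\phi$ we may assume $x_3 \le 1.98$; and, when $n \ge 10$, the sharper constraint $\max\{\min\{x_2,x_5\}, \min\{x_3,x_6\}\} \le d_n$ from \cref{lem:dn-Define} together with the refinement from \cref{lem:PnAndQn}~(1). By \cref{lem:phiLessThan}, $\phi$ is monotone nondecreasing in each of $x_2, x_3, x_5, x_6$; and direct inspection of \eqref{eq:phi-x1-x6} shows that $\partial_{x_4}\phi = (1-x_1^2)/D \le 0$, so $\phi$ is affine and decreasing in $x_4$. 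Feeding in the extreme admissible values yields the key upper estimate
\begin{equation*}
\cos\alpha(e_1^\sigma) \;\le\; \phi\!\left(2,\, b_k,\, 1.98,\, 1+\mu_n,\, 2,\, 2\right),
\end{equation*}
which is an explicit rational function of the two parameters $b_k$ and $\mu_n$.

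The proof is then completed by verifying that this explicit expression is strictly less than $\cos(2\pi/9)$ for every $k \ge 10$ and $n \ge 9$. The expression is monotone increasing in $b_k$ and decreasing in $\mu_n$, so the adverse case is $k = 10$ (maximizing $b_k$) together with $n$ large (minimizing $\mu_n$ and the room available for $x_4$). For $n = 9$, the decisive quantitative input is the lower bound $\mu_9 \ge 0.125$ from \eqref{eq:Mu9IsXiInfty}, which by itself provides enough margin. For $n \ge 10$, the interval $[1+\mu_n, b_n]$ for $x_4$ collapses toward $\{1\}$ and the naive $1.98$-bound on $\min\{x_3, x_6\}$ is no longer sufficient; instead one must invoke the sharper $d_n$- or $q_n$-bounds from \cref{lem:dn-Define} and \cref{lem:PnAndQn}~(1) to force at least one of $\{x_3, x_6\}$ and (via the opposite pair) one of $\{x_2, x_5\}$ visibly below $2$, recovering a uniform gap $\phi < \cos(2\pi/9)$.

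The main obstacle is therefore controlling this numerical margin uniformly in $n$: as $n \to \infty$, both $b_n - 1$ and $\mu_n$ tend to $0$, so the monotone-in-$x_4$ step alone is useless and the whole estimate hinges on how strongly the pair-wise $d_n/q_n$ constraints from \cref{sec:UpperBounds} compensate for $x_4$ being pushed arbitrarily close to $1$. Once the calibration of the constants $b, \gamma_{18}, \delta_{17}, d_{18}$ chosen in the proof of \cref{thm:UpperBound} is carried through in this context, the required strict inequality is obtained by a direct, finite numerical check.
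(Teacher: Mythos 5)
Your skeleton is the same as the paper's: place the valence-$k$ edge at slot $2$ so that $x_2\le b_k$ (via \cref{prop:PrioriUpperBound}), push the remaining variables to their extremes using \cref{lem:phiLessThan} and the fact that $\phi$ is linear and decreasing in $x_4$, feed in $x_4\ge 1+\mu_n$ from \cref{thm:Lowerbound}, the $1.98$-bound on opposite-pair minima from \cref{thm:UpperBound}, and the sharper $q_n$-bounds from \cref{lem:PnAndQn}(1), and finish numerically. Two points, however, prevent the write-up from being a proof.

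First, the step ``after a further relabeling using the symmetry of $\phi$ we may assume $x_3\le 1.98$'' is not available. The symmetries of $\phi$ in \eqref{eq:phi-x1-x6} that fix slots $1$ and $4$ form the Klein group $\{\mathrm{id},(23)(56),(25)(36),(26)(35)\}$, and every nontrivial element moves slot $2$; so once transitivity has been used to put the valence-$k$ edge at slot $2$, you cannot also move the $\le 1.98$ edge from slot $6$ to slot $3$, and indeed $\phi(2,b_k,1.98,x_4,2,2)\neq\phi(2,b_k,2,x_4,2,1.98)$ in general (the numerator distinguishes $x_2x_3+x_5x_6$ from $x_2x_5+x_3x_6$). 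Both placements must be estimated, i.e.\ one needs $\max\{\phi(2,b_k,1.98,\cdot,2,2),\phi(2,b_k,2,\cdot,2,1.98)\}<\cos(2\pi/9)$, which is exactly the ``$\max$ of two $\phi$-values'' form used throughout the paper's proof. Here the two values are numerically close, so the repair is routine, but as written the reduction is wrong.

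Second, and more substantively, the displayed ``key upper estimate'' together with the deferred ``finite numerical check'' is where all the content of the proposition lies, and as calibrated it does not close. The margins are razor thin: with $k=10$ ($b_{10}\approx 1.8446$) and $x_4\ge 1$ one gets $\phi(2,b_{10},1.98,1,2,2)\approx 0.769>\cos(2\pi/9)\approx 0.766$, and the correction $x_4\ge 1+\mu_n$ rescues this only when $\mu_n\gtrsim 0.025$, i.e.\ roughly $n\le 19$; conversely, for $n$ near $10$ the admissible $q_n$ from \cref{lem:PnAndQn}(1) is essentially no better than $1.98$, so your proposed remedy ``for $n\ge 10$ invoke the $d_n$/$q_n$ bounds'' does not work there either — in that range it is precisely the $x_4$-lower bound that saves the estimate. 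The paper resolves this by an explicit three-way split for $k=10$ (for $n<19$: $\delta_n=0.027$ with the crude $1.98$; for $19\le n<30$: $q_n=1.932$ after checking $h_3(1.932,1.23)\ge 2\pi$; for $n\ge 30$: $q_n=1.923$ with $\gamma_n=1.09$), plus the easy case $k\ge 11$ via $\gamma_k=1.7$. Until such concrete admissible constants are produced and the inequalities verified in each regime, the argument remains a plan; with that calibration supplied it becomes the paper's proof.
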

    \begin{proof}
    If $k\geq11$, then by the expression \eqref{eq:bnExpression} of $b_k$ we can choose $\gamma_k= 1.7\geq b_k$. By \cref{prop:PrioriUpperBound} and \cref{lem:phiLessThan},
    $$
    \cos(\alpha(e_1^{\sigma}))\leq\phi(2,\gamma_k,2,1,2,2)<\cos\left(\frac{2\pi}{9}\right).
    $$
    
    Now we only need to check the case $k=10$. Choose $\gamma_{10}=1.845\geq b_{10}$. 
    \begin{enumerate}
        \item[(1).] If $n<19$, set $\delta_n=0.027\leq \eta_{0.125}(\cos(\frac{2\pi}{18}))\leq \mu_n$. By \cref{lem:phiLessThan},
        $$
        \cos(\alpha(e_1^{\sigma}))\leq\max\{\phi(2,\gamma_{10},b,1+\delta_n,2,2),\phi(2,\gamma_{10},2,1+\delta_n,2,b)\}<\cos\left(\frac{2\pi}{9}\right).
        $$
        \item[(2).] If $19\leq n< 30$, set $\delta_n=0.01\leq \eta_{0.125}(\cos(\frac{2\pi}{29}))\leq\mu_n$ and $\gamma_n=1.23\geq b_n$. It follows that $q_n^{(1)}=1.932$ satisfies the condition of $q_n$ in \cref{lem:PnAndQn} for $\gamma_n$ and then
        $$
        \cos(\alpha(e_1^{\sigma}))\leq\max\big\{\phi\big(2,\gamma_{10},q_n^{(1)},1+\delta_n,2,2\big),\phi\big(2,\gamma_{10},2,1+\delta_n,2,q_n^{(1)}\big)\big\}<\cos\left(\frac{2\pi}{9}\right).
        $$
        \item[(3).] If $n\geq 30$, set $\gamma_n=1.09\geq b_n$. It follows that $q_n^{(2)}=1.923$ satisfies the condition of $q_n$ in \cref{lem:PnAndQn} for $\gamma_n$ and then
        $$
        \cos(\alpha(e_1^{\sigma}))\leq\max\big\{\phi\big(2,\gamma_{10},q_n^{(2)},1,2,2\big),\phi\big(2,\gamma_{10},2,1,2,q_n^{(2)}\big)\big\}<\cos\left(\frac{2\pi}{9}\right).
        $$
    \end{enumerate}
    This completes the proof.
    \end{proof}

\section{Proof of the main theorem}\label{sec:Proof}

\begin{theorem}\label{thm:Final}
    Let $(M, \mathcal{T})$ be a closed pseudo $3$-manifold such that $v(e) \ge 9$ for every edge $e \in E$. 
    Let $\{l(t) \mid t \in [0, \infty)\} \subset \mathbb{R}^E_{>0}$ be the solution of the extended Ricci flow with initial data $l^0 \in (\operatorname{arccosh}(1.13), \operatorname{arccosh} 1.9)^E$. 
    Then for all $t \geq 0$ and all $e\in E$,
        $$
        l_e(t) \in \left[\operatorname{arccosh}(1+\mu_{v(e)}), \operatorname{arccosh} (b_{v(e)})\right]\subset(0,\operatorname{arccosh}2]
        $$
    In particular, $l(t) \in \mathcal{L}(M, \mathcal{T})$ for all $t \geq 0$.
\end{theorem}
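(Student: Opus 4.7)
The plan is a bootstrap/continuity argument. Set $x_e(t) := \cosh(l_e(t))$ and define
$$T := \sup\{t \geq 0 \mid x_e(s) \leq 2 \text{ for all } e \in E \text{ and } s \in [0,t]\}.$$
Since $l_e(0) < \operatorname{arccosh}(b) < \operatorname{arccosh}(2)$, continuity of the flow gives $T > 0$. For any $t_0 < T$, the hypothesis $\sup\{l_e(s) : s \in [0,t_0],\, e \in E\} \leq \operatorname{arccosh}(2)$ holds, so \cref{prop:PrioriUpperBound}, \cref{thm:Lowerbound}, \cref{thm:UpperBound} and \cref{prop:NonNineDegreeNeighbor} all apply on $[0,t_0]$; sending $t_0 \uparrow T$ extends their conclusions to $[0,T]$ by continuity. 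Hence it suffices to prove $T = \infty$: once that is done, the two-sided bound $x_e(t) \in [1+\mu_{v(e)}, b_{v(e)}]$ is immediate from \cref{thm:Lowerbound} and \cref{prop:PrioriUpperBound}, and $l(t) \in \mathcal{L}(M,\mathcal{T})$ follows from $l_e(t) \leq \operatorname{arccosh}(2) \leq \operatorname{arccosh}(3)$ via \cref{thm:phi-1and1}.

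Suppose for contradiction that $T < \infty$. By continuity some $e_1 \in E$ satisfies $x_{e_1}(T) = 2$; since \cref{prop:PrioriUpperBound} gives $x_{e_1}(T) \leq b_{v(e_1)}$ and $b_n < 2$ for every $n \geq 10$, we must have $v(e_1) = 9$ and $x_{e_1}(T) = b_9 = 2$. Maximality of $T$ forces $\dot l_{e_1}(T^-) \geq 0$, i.e. $\widetilde K_{e_1}(l(T)) \geq 0$, equivalently
$$\sum_{\hat e \in P_E^{-1}(e_1)} \alpha(\hat e) \leq 2\pi \quad \text{at } t = T.$$
The goal is to derive a contradiction by establishing the reverse strict inequality. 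Fix $\hat e \in P_E^{-1}(e_1)$ inside a tetrahedron $\hat\sigma$, choose an edge orientation with $e_1^{\hat\sigma} = \hat e$, and set $n := v(P_E(e_4^{\hat\sigma}))$. If some $i \in \{2,3,5,6\}$ has $v(P_E(e_i^{\hat\sigma})) \geq 10$, then \cref{prop:NonNineDegreeNeighbor} directly gives $\alpha(\hat e) > 2\pi/9$. Otherwise all four edges of $\hat\sigma$ adjacent to $\hat e$ have valence $9$, and \cref{lem:PnAndQn}(2) (applied with this $n$) upgrades the rough bound $x_i^{\hat\sigma}(T) \leq 2$ to $x_i^{\hat\sigma}(T) \leq p_n$. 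Combining the monotonicity of $\phi$ in $x_2,x_3,x_5,x_6$ furnished by \cref{lem:phiLessThan} with the fact that $\phi$ is strictly decreasing in $x_4$ whenever $x_1 > 1$, one obtains
$$\cos\alpha(\hat e) \leq \phi(2,p_n,p_n,1+\mu_n,p_n,p_n) = \frac{2 p_n^2 - 1 - \mu_n}{2 p_n^2 + 1},$$
and the task reduces to checking that this quantity is strictly less than $\cos(2\pi/9)$ for every $n \geq 9$. Summing $\alpha(\hat e) > 2\pi/9$ over the nine preimages of $e_1$ then yields $\sum_{\hat e} \alpha(\hat e) > 2\pi$, contradicting the displayed inequality.

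The main obstacle is the quantitative verification of the last displayed inequality. For $n = 9$, where we may take $p_9 = 2$, it reduces to $\mu_9 > 7 - 9\cos(2\pi/9) \approx 0.106$, which is delivered by $\mu_9 \geq 0.125$ (see \eqref{eq:Mu9IsXiInfty}). For $n \geq 10$ we have $\mu_n < 0.106$, so the naive choice $p_n = 2$ fails and the strictly smaller $p_n$ produced by \cref{lem:PnAndQn}(2) becomes essential; one must select, for each such $n$, auxiliary constants $\gamma_n \in [b_n, 2]$ and $d_n \in (1,2)$ satisfying the hypotheses of \cref{lem:dn-Define} and \cref{lem:PnAndQn}(2) so that the resulting $p_n$ is small enough to enforce $(2p_n^2 - 1 - \mu_n)/(2p_n^2 + 1) < \cos(2\pi/9)$. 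This is a finite numerical verification of exactly the type already performed for $n = 18$ in the proof of \cref{thm:UpperBound}.
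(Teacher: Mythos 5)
Your overall skeleton matches the paper's: a first-crossing argument at the first time some $x_e$ reaches $2$, reduction to the case $v(e_1)=9$ (you do this via \cref{prop:PrioriUpperBound} and $b_n<2$, the paper via a direct angle estimate — both fine), the dichotomy according to whether some edge of $\hat\sigma$ adjacent to $\hat e$ has valence $\ge 10$ (handled by \cref{prop:NonNineDegreeNeighbor}), and the conclusion via \cref{thm:Lowerbound}, \cref{prop:PrioriUpperBound} and \cref{thm:phi-1and1}. The gap is in the final quantitative step of the all-valence-$9$ case. You bound all four side edges of $\hat\sigma$ by the single constant $p_n$ from \cref{lem:PnAndQn}(2) and reduce to $(2p_n^2-1-\mu_n)/(2p_n^2+1)<\cos(2\pi/9)$. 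This throws away exactly the information the argument needs: \cref{lem:PnAndQn}(1) bounds \emph{one edge of each opposite pair} by the strictly smaller constant $q_n$, and the paper's estimate in this case is $\max\{\phi(2,q_n,q_n,1+\delta_n,p_n,p_n),\,\phi(2,q_n,p_n,1+\delta_n,p_n,q_n)\}<\cos(2\pi/9)$; the whole table of $(\gamma_n,\delta_n,d_n,q_n,p_n)$ exists to make that two-parameter inequality checkable.

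Your single-parameter inequality is not a deferred numerical check — it genuinely fails. For $n=10$ one has $\mu_{10}\approx 0.099<7-9\cos(2\pi/9)\approx 0.1056$, so with $p_{10}=2$ your bound gives $\phi(2,2,2,1+\mu_{10},2,2)=(7-\mu_{10})/9\approx 0.7668>\cos(2\pi/9)\approx 0.7660$; to fix this you would need $p_{10}\lesssim 1.996$. But \cref{lem:PnAndQn}(2) forces $\gamma_{10}\ge b_{10}\approx 1.845$, and with $d_{10}$ as constrained by \cref{lem:dn-Define} (the table value is $1.9941$) one finds $h_4(x,\gamma_{10},d_{10})\approx 6.22<2\pi$ even at $x=2$, so only the trivial clause $p_{10}=2$ is available — which is precisely why \cref{tab:parameters} lists $p_n=2$ for $9\le n\le 12$. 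The same mismatch persists for larger $n$: your inequality needs $p_n<\sqrt{(1+\cos(2\pi/9)+\mu_n)/(2-2\cos(2\pi/9))}$, i.e.\ roughly $1.94$--$1.98$ depending on $n$, whereas the values deliverable by $h_4\ge 2\pi$ are roughly $1.97$--$2$ (e.g.\ threshold $\approx 1.943$ versus $p_n=1.9658$ for $n\ge 40$). In contrast, only the $n=9$ subcase of your scheme (using $\mu_9\ge 0.125$) and the valence-$\ge 10$ neighbour subcase survive. To close the argument you must reinstate the $q_n$-bound from \cref{lem:PnAndQn}(1) on one edge of each opposite pair, i.e.\ run the paper's two-parameter estimate (or supply some genuinely new input of comparable strength).
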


\begin{proof}
    Since $\mu_n<0.13$ for all $n\geq9$, it follows from \cref{thm:Lowerbound} that 
    $$
        l_e(t) \in \left[\operatorname{arccosh}(1+\mu_{v(e)}), \operatorname{arccosh} (b_{v(e)})\right]
    $$
    as long as 
    $$
    \sup\{l_e(\tau):0\leq \tau\leq\
        t, e\in E\}\leq \operatorname{arccosh} 2.
    $$
    Therefore, it suffices to prove that $l(t)\in(0, \operatorname{arccosh} 2]^E$ for all $t>0$. Suppose, for contradiction, that
    $$
    K := \left\{t \in [0, \infty) : \max_{e \in E} l_e(t) \geq \operatorname{arccosh} 2\right\}
    $$
    is a non-empty. Set $t_1 := \inf K$, then $t_1>0$ since $l^0 \in (0, \operatorname{arccosh} 2)^E$.
    Suppose that $e_1$ is an edge in $E$ such that
    $$
    l_{e_1}(t_1) = \max_{e \in E}l_e(t_1).
    $$
    Then $l_{e_1}(t_1) =\operatorname{arccosh}2$ and
    $$
    l_{e_1}(t) \leq \max_{e \in E} l_e(t) < \operatorname{arccosh} 2, \quad \forall t < t_1.
    $$
    This implies that
    $$
    l_{e_1}'(t_1) \geq 0.
    $$

    We claim that the dihedral angle $\alpha(\hat{e})>\frac{2\pi}{v(e_1)}$ for each $\hat{e}\in P^{-1}_E(e_1)$. If this claim holds, then
    $$
    0\leq l_{e_1}'(t_1)=K_{e_1}(t_1)l_{e_1}(t_1)=\left(2\pi-\sum_{\hat{e}\in P^{-1}_E(e_1)}\alpha(\hat{e})\right)l_{e_1}(t_1)<0,
    $$
    which is a contradiction and thus completes the proof.
    We now proceed to prove the claim.
    
    \textbf{Case 1.} $v(e_1)\geq10$. It follows from \cref{lem:phiLessThan} that
    $$
    \cos{\alpha(\hat{e})}\leq \phi(2,2,2,1,2,2)<\cos\left(\frac{2\pi}{10}\right)\leq\cos\left(\frac{2\pi}{v(e_1)}\right).
    $$
    
    \textbf{Case 2.} $v(e_1)=9$. Suppose that $\hat{e}\sim\sigma\in T(\mathscr{T})$. Let $(e_1^{\sigma},e_2^{\sigma},e_3^{\sigma},e_{4}^{\sigma},e_5^{\sigma},e_{6}^{\sigma})$ be an edge orientation of $\sigma$ such that $e_1^{\sigma}=\hat{e}$.

    \begin{enumerate}
        \item[]\textbf{Case 2.1.} There exists $i\in\{2,3,5,6\}$ such that the edge $e_i^{\sigma}\in E$ has valence $k\geq10$. Then $\alpha(\hat{e})>\frac{2\pi}{9}$ follows directly from \cref{prop:NonNineDegreeNeighbor}.

        \item[]\textbf{Case 2.2.} All the four edges $e_i^{\sigma}$, $i\in\{2,3,5,6\}$ in $\sigma$ have valence $9$. Fix the parameters
    $$
    b=1.98,\quad\gamma_{18} = 1.2488,\quad d_{18}=1.9454,\quad \delta_{17} = 0.0314,\quad \delta_9=0.125.
    $$
    Then $b\geq b_M$ by \cref{thm:UpperBound}, $\gamma_{18}\geq b_{18}$, $\delta_{17}\leq\mu_{17}$, $\delta_9\leq\mu_9$, and $d_{18}=1.9454$ satisfies the condition in \cref{lem:dn-Define} for $b$ and $\gamma_{18}$. 
    
    For each $n\geq9$, we set the value of $\gamma_n$, $\delta_n$, $d_n$, $q_n$, $p_n$ as in the \cref{tab:parameters}.
     After numerical verification, we observe that $\gamma_n\geq b_n$, $\delta_n\leq\mu_n$, $d_n$ satisfies the condition in \cref{lem:dn-Define}, and $q_n$, $p_n$ satisfy the condition in \cref{lem:PnAndQn} for $b$ and $\gamma_n$. It then follows that
    $$
    \cos{\alpha(\hat{e})}\leq\max\{\phi(2,q_n,q_n,1+\delta_n,p_n,p_n),\phi(2,q_n,p_n,1+\delta_n,p_n,q_n)\}<\cos\left(\frac{2\pi}{9}\right).
    $$
    \end{enumerate}

    Therefore, we conclude that $l(t) \in(0, \operatorname{arccosh}2]^E$. It then follows from \cref{thm:phi-1and1} that $l(t) \in \mathcal{L}(M, \mathcal{T})$.

     \begin{table}[h!]
    \centering
    \caption{$\gamma_n$, $\delta_n$, $d_n$, $q_n$ and $p_n$}
    \label{tab:parameters}
    \begin{tabular}{|c|c|c|c|c|c|}
    \hline
    valence & $\gamma_n$ & $\delta_n$ & $d_n$ & $q_n$ & $p_n$ 
    \\
    \hline
    $n\geq40$ & $ 1.05$ & $ 0$ & 1.9316 & 1.9194 & 1.9658
    \\
    \hline
    $30 \leq n \leq 39$ & $ 1.09$ & $ 0.0057$ & 1.9344 & 1.9222 & 1.9687
    \\
    \hline
    $25 \leq n \leq 29$ & $ 1.128$ & $ 0.0105$ & 1.9370 & 1.9248 & 1.9715
    \\
    \hline
    $22 \leq n \leq 24$ & $ 1.166$ & $ 0.0154$ & 1.9397 & 1.9274 & 1.9742
    \\
    \hline
    $20 \leq n \leq 21$ & $ 1.201 $ & $ 0.0202$ & 1.9421 & 1.9298 & 1.9767 \\
    \hline
    $n = 19$ & $ 1.2228$ & $ 0.0249$ & 1.9436 & 1.9313 & 1.9782 \\
    \hline
    $n = 18$ & $ 1.2488$ & $ 0.0278$ & 1.9454 & 1.9330 & 1.9801 \\
    \hline
    $n = 17$ & $ 1.2796$ & $ 0.0314$ & 1.9475 & 1.9351 & 1.9823 \\
    \hline
    $n = 16$ & $ 1.3166$ & $ 0.0356$ & 1.9500 & 1.9376 & 1.9848 \\
    \hline
    $n = 15$ & $ 1.3615$ & $ 0.0408$ & 1.9531 & 1.9405 & 1.9880 \\
    \hline
    $n = 14$ & $ 1.4168$ & $ 0.0472$ & 1.9568 & 1.9442 & 1.9918 \\
    \hline
    $n = 13$ & $ 1.4861$ & $ 0.0553$ & 1.9614 & 1.9487 & 1.9965 \\
    \hline
    $n = 12$ & $ 1.5744$ & $ 0.0657$ & 1.9672 & 1.9544 & 2 \\
    \hline
    $n = 11$ & $ 1.6898$ & $ 0.0795$ & 1.9746 & 1.9617 & 2 \\
    \hline
    $9 \leq n \leq 10$ & $ 2$ & $ 0.0983$ & 1.9941 & 1.9808 & 2 \\
    \hline
    \end{tabular}
    \end{table}
    \end{proof}

\begin{proof}[Proof of \cref{thm:MainTheorem}]
        It follows immediately from \cref{prop:BoundnessToExistence} and \cref{thm:Final} that there exists a generalized hyper‑ideal metric $l\in(0,\operatorname{arccosh}2]^E$ on $(M,\mathcal{T})$ such that 
        $$
        l_e \in \left[\operatorname{arccosh}(1+\mu_{v(e)}), \operatorname{arccosh} (b_{v(e)})\right]\quad \text{and}\quad\widetilde{K}_e(l)=0
        $$
         for all $e\in E$. By \cref{thm:phi-1and1}, $l$ is in fact a zero-curvature hyper‑ideal metric. Then by \cref{thm:convergence-equivalence}, the extended Ricci flow \eqref{eq:ExtendedRicciFlow} converges exponentially fast to $l$. For the uniqueness of the zero-curvature hyper‑ideal metric on $(M,\mathcal{T})$, see Luo and Yang’s  rigidity theorem \cite[Theorem 1.2]{LuoYang}; see also \cite[Theorem 4.10]{KeGeHua} for an alternative proof.
\end{proof}

\textbf{Acknowledgements.} 
I would like to express my sincere gratitude to my supervisor, Professor Bobo Hua, for his continuous support and invaluable guidance throughout this research.
\bibliographystyle{plain}
\bibliography{main}

\noindent Xinrong Zhao, xrzhao24@m.fudan.edu.cn\\
\emph{School of Mathematical Sciences, Fudan University, Shanghai, 200433, P.R. China}\\[-8pt]
\end{document}